\newtheorem{theorem}{Theorem}[section]
\newtheorem{lemma}[theorem]{Lemma}
\newtheorem{corollary}[theorem]{Corollary}
\theoremstyle{definition}
\newtheorem{example}[theorem]{Example}
\theoremstyle{remark}
\newtheorem{remark}[theorem]{Remark}
\numberwithin{equation}{section}
\begin{document}
\allowdisplaybreaks
\newcommand{\E}{\mathbb{E}}
\newcommand{\PP}{\mathbb{P}}
\newcommand{\RR}{\mathbb{R}}
\newcommand{\I}{\mathbb{I}}
\newcommand{\dis}{\displaystyle}
\newtheorem{assumption}{Assumption}
\def\t{\triangle}  \def\ts{\triangle^*}
\def\ra{\rightarrow}
\def\a{\alpha} \def\b{\beta} \def\d{\delta}\def\g{\gamma}
\def\e{\varepsilon} \def\z{\zeta} \def\y{\eta} \def\o{\theta}
\def\vo{\vartheta} \def\k{\kappa} \def\l{\lambda} \def\m{\mu} \def\n{\nu}
\def\x{\xi}  \def\r{\rho} \def\s{\sigma}
\def\p{\phi} \def\f{\varphi}   \def\w{\omega}
\def\q{\surd} \def\i{\bot} \def\h{\forall} \def\j{\emptyset}
 \def\si{s_{ij}}
\def\be{\beta} \def\de{\delta} \def\up{\upsilon} \def\eq{\equiv}
\def\ve{\vee} \def\we{\wedge}
\def\SS{{\cal S}}
\def\F{{\mathcal F}} \def\Le{{\cal L}}
\def\X{\Xi} \def\S{\Sigma}
\def\lf{\left} \def\rt{\right}
\def\trace{\hbox{\rm trace}}
\def\diag{\hbox{\rm diag}}
\def\nn{\nonumber}
\def\be{\begin{equation}}
\def\ee{\end{equation}}
\def\la{\label}
%
%
% \title[short text for running head]{full title}
\title[Convergence and stability of explicit numerical schemes for SDEs]{Strong convergence and asymptotic stability of explicit numerical schemes for nonlinear stochastic differential equations}

%    Only \author and \address are required; other information is
%    optional.  Remove any unused author tags.

%    author one information
% \author[short version for running head]{name for top of paper}
\author{Xiaoyue Li}
\address{School of Mathematics and Statistics,
Northeast Normal University, Changchun, Jilin, 130024, China.}
\curraddr{}
\email{lixy209@nenu.edu.cn}
\thanks{The research of the first author was  supported in part by the National Natural Science Foundation of Chain (11171056 and 11471071), the  Natural Science Foundation of Jilin Province (20170101044JC), and the  Education Department of Jilin Province (JJKH20170904KJ)}

%    author two information
\author{Xuerong Mao}
\address{Department of Mathematics and Statistics,
University of Strathclyde, Glasgow G1 1XH, U.K.}
\curraddr{}
\email{x.mao@strath.ac.uk}
\thanks{The research of the second author was supported in part by   the Royal Society (WM160014, Royal Society Wolfson Research Merit Award), the  Royal Society and the Newton Fund (NA160317, Royal Society-Newton Advanced Fellowship), the  EPSRC (EP/K503174/1)}

\author{Hongfu Yang}
\address{School of Mathematics and Statistics,
Northeast Normal University, Changchun, Jilin, 130024, China.}
\curraddr{}
\email{yanghf783@nenu.edu.cn}
%\thanks{The third author is the corresponding author}
\subjclass[2010]{Primary 65C30, 60H35, 65H10}

\date{}

\dedicatory{} 
\keywords{Stochastic differential equation, local Lipschitz condition, explicit scheme,
Lyapunov functions, LaSalle's theorem, strong convergence}
%    Abstract is required.
\begin{abstract}
In this article we introduce several kinds of easily implementable explicit schemes, which are amenable to Khasminski's techniques and are particularly suitable for highly nonlinear stochastic differential equations (SDEs).
 We show that without additional restriction conditions except  those which guarantee the exact solutions possess their boundedness in expectation  with respect to certain Lyapunov functions,
 the numerical solutions converge  strongly to the exact solutions in finite-time.
 Moreover, based on  the nonnegative  semimartingale convergence theorem,
 positive results about the ability of explicit numerical approximation to reproduce the   well-known LaSalle-type theorem of SDEs are proved here,   from which we deduce the asymptotic stability of numerical solutions.
 Some examples and simulations are provided to support the theoretical results and to demonstrate the validity of the approach.
\end{abstract}

\maketitle

%%    Text of article.
%
%%    Bibliographies can be prepared with BibTeX using amsplain,
%%    amsalpha, or (for "historical" overviews) natbib style.
%\bibliographystyle{amsplain}
%%    Insert the bibliography data here.
\section{Introduction}
\label{intro}
In 1949, It\^{o} established the well known stochastic calculus. Since then, the theory of stochastic differential equations (SDEs) has been developed  quickly. Particularly, the Lyapunov method has been used to deal with the dynamical behaviors of SDEs in both finite and infinite intervals by  many authors, and here we only mention Arnold \cite{1}, Friedman \cite{2}, Khasminskii \cite{Khasminski80}, Kushner \cite{3},  Mao \cite{Mao08,Mao2002}, and Yin and Zhu \cite{yz09}.
These studies also showed  that  the integrability and the stability of solutions to SDEs can be obtained via stochastic Lyapunov analysis. However, these properties are not necessarily inherited by standard numerical approximations.
 The main goal of this article is to develop the approximations techniques of nonlinear SDEs that are flexible enough for the stochastic Lyapunov method. Despite of the lack of a discrete version of  It\^{o}'s formula, asymptotic and qualitative properties in discrete version are obtained by our explicit schemes.
 In particular,  this article is to construct
 easily implementable numerical solutions and prove that they converge to the true solution of the underlying SDEs. In addition to obtaining the $V$-integrability (see, \cite[Definition 2.3]{Lukasz17}) and  convergence rate, we consider the  explicit numerical approximations to reproduce the well-known LaSalle-type theorem of SDEs, from which we deduce the asymptotic stability of numerical solutions.

Explicit Euler-Maruyama (EM) schemes are most popular for approximating the solutions of SDEs
under global Lipschitz continuously (see, e.g., \cite{Kloeden, Mao08, Hi02}).
  However, the coefficients of many important SDE models are only locally Lipschitz and superlinear (see, e.g., \cite{16, Hutzenthaler15} and the references therein).
 If the global Lipschitz condition does not hold for either of the coefficients,
Hutzenthaler, Jentzen and Kloeden \cite{16} showed that the explicit Euler scheme may
have unbounded moments, and consequently the classical Euler scheme may fail to
converge strongly.
 Implicit methods were developed to approximate the solutions of these SDEs. Higham et al. \cite{Hi02}
showed that the backward EM schemes converge if the diffusion coefficients are globally Lipschitz
while the drift coefficients satisfy a one-sided Lipschitz condition. Therefore,  the methods with implicit structure are often employed as the alternatives, more details on the implicit methods can be found in \cite{Kloeden, Saito1993, Lukasz11, Mao08}.
 Nevertheless, additional computational efforts are required for its implementation since the solution of an algebraic equation has to be found before each iteration.

Due to the advantages of explicit schemes (e.g., simple structure and cheap computational cost), a few modified EM methods have been developed for nonlinear SDEs including the tamed EM method \cite{Hutzenthaler12, Hutzenthaler15,Sabanis13,Sabanis16}, the tamed Milstein method \cite{Wang13}, the stopped EM method \cite{LiuW13} and the truncated EM method \cite{Mao20152,Guo2017,Lan,Li18}. These modified EM methods have shown their advantages to approximate the solutions of nonlinear SDEs  in any finite time interval.
  In all of the above mentioned results from the literature, the moment bounds relies on simple Lyapunov functions such as $|\cdot|^p$ with $p\geq 2$.
 $V$-integrability results on more general Lyapunov-type functions
 of some specific modified (tamed)  EM schemes were established  (see, e.g.   \cite{Lukasz17,Hutzenthaler15,Hutzenthaler17}).
  For examples,   Hutzenthaler and Jentzen \cite{Hutzenthaler15}  showed some criteria for moment bounds
under
priori estimates (see, \cite[Proposition 2.7]{Hutzenthaler15}),
these moment bounds are then used to prove strong convergence of the proposed methods.
  Szpruch and Zhang \cite{Lukasz17}
investigated the $V$-integrability  of explicit numerical approximations of SDEs with additional restriction conditions on  coefficients, and provided the (1/2)-order rate of convergence in the strong mean square sense for the projected schemes. It is observed that  to construct the appropriate scheme to inherit the integrability is  challenging.
Motivated by this, we construct easily implementable
explicit EM schemes for nonlinear SDEs and establish their $V$-integrability by requiring only that the drift and diffusion coefficients are locally Lipschitz and satisfy a structure condition \eqref{1.4} for the $V$-integrability of the exact solution.
  We also demonstrate the convergence of the algorithm under weak  conditions. % compared with what is known in the literature.
   Then under slightly stronger conditions,  we prove the strong convergence rate for the explicit schemes, with respect to a larger class of Lyapunov functions.

  On the other hand,  long-time behaviors of SDEs are also the hot concerns in stochastic processes, systems theory, control  and optimization (see, e.g., the monographs \cite{Mao08,Khasminski80, yz09} and the references therein).
 So far, the dynamical properties of SDEs are investigated deeply including stochastic stability (see, e.g., \cite{Khasminski80, Mao08, Lukasz17}),  ergodicity (see, e.g., \cite{Khasminski80,Bao1,Mattingly,Martin}) and so on. Here we focus on the asymptotic stability.  Although the finite-time convergence is one of the fundamental concerns,  how to preserve the asymptotic stability of the underlying SDEs  is   significant and challenging.
 Recently, considerable effort has been made in this direction (mainly for implicit schemes) in \cite{Higham3, Higham4, Higham5, S5, SS1},
 where  the diffusion coefficients of SDEs are always required to be global Lipschitz continuous.
 Accommodating the many applications although these systems are more realistically addressing the demands, the nontraditional setup make the discrete approximations of the asymptotic stability  for    nonlinear SDEs  more difficult.
 Thus,  in order to close the  gap, a few modified EM methods have been developed
to approximate the asymptotic  stability for nonlinear SDEs. For instance,
 Guo et  al. \cite{Guo2017}
 showed that the partially truncated EM method can preserve the mean square exponential stability of the underlying SDEs.
 Liu and Mao  \cite{S7} made use of the EM method with random variable stepsize
to reproduce the almost sure stability of  the underlying SDEs. Szpruch and Zhang \cite{Lukasz17}
 established  the asymptotic stability properties for
tamed  EM schemes and projected schemes, which  admitting
certain Lyapunov functions.   % for examples $\bar{V}(x)=\sum_{i=1}^{d}C_i x_i^{p_i}$, where $C_i\in \mathbb{R}$ and the $p_i$'s (non-negative) are not necessarily identical.
    For the further development of numerical schemes for SDEs, we refer  readers to \cite{Lukasz17,Hutzenthaler17,Li18}, for example, and the references therein.   To the best of our knowledge,
much research on  numerical stability %of some specific modified  explicit schemes
relies on simple Lyapunov functions such as $|\cdot|^2$,  %(see, e.g., \cite{Guo2017,S7,Hu2018} and the references therein),
with the exception of  \cite{Lukasz17,Li18}.
 Here our aim is to  handle more general cases by new schems, particularly Lyapunov functions of the form  $|\cdot|^p$ for any $p>0$ or  polynomials of the more general form.

%  and to the best of our knowledge the asymptotic stability of explicit numerical schemes beyond the Lipschitz setting is received less attention in the literature so far.

In this paper,  borrowing  the truncation idea from \cite{Mao20152,Li18} and using novel approximation technique, we  construct  a new explicit scheme  that preserve the $V$-integrability  of SDEs with respect to a larger class of Lyapunov functions, and derive strong convergence  result in a finite  time interval.
 Then  we go further to improve the scheme according to the structure  condition of the LaSalle-type theorem
  such that it is easily  implementable for approximating the underlying stability  of the SDEs, admitting a large class of Lyapunov functions.
 The schemes proposed in this paper  are obviously different from those  of
   \cite{Guo2017,S7,Mao20152,Lukasz17}.
  More precisely, the numerical solutions at the grid points are modified before each iteration according to the growth rate of the drift and diffusion coefficients such that the numerical solutions keep  the underlying excellent properties of the exact solutions of SDEs.
 Our main contributions are as follows:
\begin{itemize}

\item[$\bullet$] We construct an easily implementable scheme for the SDEs with only local Lipschitz drift and diffusion coefficients, which
   numerical solutions preserve the $V$-integrability of the exact solution almost perfectly with respect to a larger class of Lyapunov functions, and
establish finite-time strong convergence results.

\item[$\bullet$] We reconstruct a more precise explicit scheme  to reproduce the   LaSalle-type results in stochastic version %of  SDEs from which
%we deduce the asymptotic stability of numerical solutions, admitting
for a large class of auxiliary Lyapunov functions.  Especially, the explicit scheme inherit the exponential stability of the exact solution well.

\item[$\bullet$] Without extra restrictions, the numerical solutions of the explicit schemes stay in step of asymptotic stability of the exact solution  in respect to certain Lyapunov functions.
 Compared with the existing results on the asymptotic stability \cite{Guo2017,S7,Li18,Lukasz17},  the range of the auxiliary  Lyapunov functions is extended by using our explicit scheme.
\end{itemize}

The rest of the paper is organized as follows. Section \ref{n-p} begins with notations and preliminaries on the properties of the  exact solution.
 Section \ref{3s-c}   constructs an explicit scheme, and yields the strong convergence and integrability   in a finite time interval, with respect to a larger class of Lyapunov functions. Section \ref{c-r} provides the rate of convergence.
 Section \ref{5s-b} reconstructs a more precise explicit scheme to  reproduce the  LaSalle-type results in stochastic version. %from which we deduce stability properties for numerical methods.
  Section \ref{exa} presents a couple of examples and simulations  to illustrate
our results. Section \ref{concluding}  gives some futher remarks to conclude the paper.

\section{Notations and preliminaries}\label{n-p}
Throughout this paper,  we  use the following notations. Let $d$,  $m$ and $n$ denote finite positive integers,  $|\cdot|$ denote the Euclidean norm in $\RR^d:=\RR^{d\times 1}$ and the trace norm in $\RR^{d\times m}$,  $\langle\cdot,\cdot\rangle$ stand for the dot product (usual Euclidean scalar product) on $\mathbb{R}^d$.
 %If $A$ is a vector or matrix, its transpose is denoted by $A^T$ and its trace norm is denoted by $|A|=\sqrt{\mathrm{tr}(A^TA)}$.  For vectors  or matrixes $A$ and $B$ with compatible dimensions, $A B$ denotes the usual matrix multiplication.
 For any $a, b\in \mathbb{R}$,
 $a\vee b:=\max\{a,b\}$, and $a\wedge b:=\min\{a,b\}$.  If $\mathbb{D}$ is a set, its indicator function is denoted by $I_{\mathbb{D}}$, namely $I_{\mathbb{D}}(x)=1$ if $x\in \mathbb{D}$ and $0$ otherwise.
 Let $( \Omega, \mathcal{F}, \PP )$ be a complete  probability space, and $\mathbb{E}$ denotes the expectation corresponding to $\PP$. Let~$B(t)=\big(B^{(1)}(t), \ldots, B^{(m)}(t)\big)^T$ be an $m$-dimensional Brownian motion defined on this probability space.   Suppose $\{\mathcal{F}_{t}\} _{t \geq 0}$ is a filtration defined on this probability space satisfying the usual conditions (i.e., it is right continuous and $\mathcal{F}_0$ contains all $\mathbb{P}$-null sets) such that $B(t)$ is $\mathcal{F}_{t}  $ adapted. Let  $\mathbb{R}_+:=(0,\infty) $,  $\bar{\mathbb{R}}_+:=[0,\infty) $ and $\mathbf{0}$ denote a null matrix whose dimension may change in different appearances.
Also let $C_i$ and $C$ denote two generic positive real constants respectively, whose value may change in different appearances, where $C_i$ is  dependent on $i$ and $C$ is independent of  $i$.
 In this paper, we consider the  $d$-dimensional stochastic differential equation (SDE)
\be\label{e1}
\mathrm{d}X(t)=f(  X(t))\mathrm{d}t +g(  X(t))\mathrm{d}B(t)
\ee
with an initial value $X(0)=x_0\in \RR^d$, %where $B(t)$ is an $m$-dimensional Brownian motion
where the drift and diffusion terms
$$
  f:  \RR^d    \rightarrow \RR^d, \qquad \quad g: \RR^d  \rightarrow \RR^{d\times m},
$$
 are   local Lipschitz continuous,  this is,
for any $N>0$  there exists a positive   constant $C_N$ such that, for any  $x, y\in \RR^d  $ with  $|x |\vee |y|\leq N$,
  $$|f(x)-f(y)|\vee |g(x)-g(y)|\leq C_N |x-y|.$$
%Moreover, we suppose
% $\sup_{t\geq 0} |f(t,\mathbf{0})|\vee  |g(t,\mathbf{0})|< \infty,$
%which implies that both $f(t,x)$ and $g(t,x)$ are locally bounded in $x$ while uniformly bounded in $t$, where $\mathbf{0}=(0, \ldots, 0)^T\in \mathbb{R}^d$.

Let
  $\mathcal{C}^{p} (\RR^d; \bar{\mathbb{R}}_+)$ denote the family of all nonnegative functions
 $V(x)$ on $\RR^d$ which are continuously $p$th differentiable in $x$. Let
  $\mathcal{C}_\infty^{p}(\RR^d; \bar{\mathbb{R}}_+)$ denote the family of all  functions
 $V(\cdot)\in \mathcal{C}^{p} (\RR^d; \bar{\mathbb{R}}_+)$ with the property $\lim_{|x|\rightarrow \infty} V(x)=\infty$.
 For convenience, we cite the following notations introduced by \cite[p.617]{Evans}.
 A vector of the form  $\alpha=(\alpha_1,\ldots, \alpha_d)$, where each component $\alpha_i$ is nonnegative integer, is called a {\it multiindex} of order
$
|\alpha|:=\alpha_1+\cdots+\alpha_d.
$
For any
 multiindex   $\alpha=(\alpha_1,\ldots, \alpha_d)$ and any vector $x \in \mathbb{R}^d$,  we set as usual
$   \alpha!:=\alpha_1!\cdots\alpha_d!$,  $x^\alpha:=x_1^{\alpha_1}\cdots x_d^{\alpha_d}.$  If $\beta=(\beta_1,\ldots,\beta_d)$ is also a multiindex, then
$\alpha\geq \beta $
means each component $\alpha_{i}\geq \beta_{i}$ for any $1\leq i\leq d$, multiindex
$\alpha-\beta:=(\alpha_1-\beta_1,\ldots, \alpha_d-\beta_d)$, and multiindex   $\alpha+\beta:=(\alpha_1+\beta_1,\ldots, \alpha_d+\beta_d)$.
  For each $V(x)\in \mathcal{C}^{p} (\RR^d; \bar{\mathbb{R}}_+)$ and    a multiindex $\alpha$ with $|\alpha|\leq p$, define
$$
D^{\alpha}V(x):=\frac{\partial^{|\alpha|}V(x)}{\partial x_1^{\alpha_1}\partial x_2^{\alpha_2}\cdots\partial x_d^{\alpha_d}}.
$$
For any nonnegative integer
$n\leq  p$,  define
$
D^{(n)}V(x):=\big\{D^{\alpha}V(x)\big||\alpha|=n\big\},
$
the set of all partial derivatives with $n$th order.
Assigning some ordering to the various partial derivatives, we can also regard
$D^{(n)}V(x)$ as a point in $\mathbb{R}^{d^n}$ and define
 $
|D^{(n)}V(x)|=\Big(\sum_{|\alpha|=n}|D^{\alpha}V(x)|^2\Big)^{\frac{1}{2}}.
$
Especially,  if $n=1$, we regard the elements of $D^{(1)}V(x)$ as being arranged in a vector
$$
D^{(1)}V(x)=\left(\frac{\partial V(x)}{\partial x_1},
 \ldots,\frac{\partial V(x)}{\partial x_d}\right), \quad  \mathrm{and}\quad  |D^{(1)}V(x)|=\bigg[\sum_{i=1}^d\Big(\frac{\partial V(x)}{\partial x_i}\Big)^2\bigg]^{\frac{1}{2}};
$$
If $n=2$, we regard the elements of $D^{(2)}V(x)$ as being arranged in a matrix
$$D^{(2)}V(x) =\left(\frac{\partial^2
V(x)}{\partial x_i  \partial x_j}\right)_{d\times d},\quad  \mathrm{and}\quad  |D^{(2)}V(x)|=\bigg[\sum_{i,j=1}^d\Big(\frac{\partial^2 V(x)}{\partial x_i\partial x_j}\Big)^2\bigg]^{\frac{1}{2}}.$$
%where $D^{(2)}V(\cdot)$ is the Hessian matrix of $V(\cdot)$.

  For each $V(x)\in \mathcal{C}^{2} (\RR^d; \bar{\mathbb{R}}_+)$, define an
operator ${\mathcal{L}}V$ from
 $  \RR^d$ to $  \RR$ by
\begin{align}\la{1e1.2}
{\mathcal{L}} V(x) =& \langle  D^{(1)}V(x), f(x)\rangle
 +\frac{1}{2} \hbox{tr}\Big[g^T(x)  D^{(2)}V(x) g(x)\Big].
\end{align}
As \cite{Hutzenthaler15, Lukasz17}, for a pair of intergers  $p\in [2, +\infty)$ and  $1/\delta_p\in [p, +\infty)$, define
\begin{align}\la{1*}
  \mathcal{V}^{p}_{\delta_p}:=\Big\{V\in \mathcal{C}^{p}_\infty(\mathbb{R}^d;\bar{\mathbb{R}}_{+})\Big|&  \exists~c>0~~ \mathrm{s.t.}~~\nn\\
   &~~ |D^{(n)}  V(\cdot)|\leq c \big(1+V(\cdot)\big)^{1- n \delta_{p}},~ n=1,2,\ldots, p\Big\} .
\end{align}
Note that  many frequently-used functions belong to the set $\mathcal{V}^{p}_{\delta_p}$.
%For instance,  $V(x):=|x|^p$ with $p\geq 2$, and \begin{align}
%V(x):=\sum_{i=1}^{d}c_{i}x_{i}^{p_i},~~~~~c_{1}, \cdots, c_{d}\in \mathbb{R},
%\end{align}
%where  constants $p_i$ are non-negative but unnecessarily identical.
Hence  one has a lot of opportunities  to the underlying given SDE (see \cite{Hutzenthaler15} for more details).

Now we prepare the regularity and $V$-integrability of the exact solution.
%
%Now we give the regularity of exact solution  as well as the $V$-integrability.
\begin{theorem}\la{th1}
Assume there is a function $V\in \mathcal{C}_\infty^2(\mathbb{R}^d; \bar{\mathbb{R}}_+)$ and a pair of  positive constants $\rho$ and
$\lambda$ such that
\begin{align}\la{1.4}
  \mathcal{L} (1+ V(x))^{\rho}  &= \frac{\rho}{2}\big(1+V(x)\big)^{\rho-2}\Big[2\big(1+V(x)\big)
 \mathcal{L}V(x) +(\rho-1)|D^{(1)}V(x) g( x)|^2\Big] \nn\\
 &\leq   \lambda \big[1+V^{\rho}(x)\big],\qquad\qquad \forall x\in \mathbb{R}^d.
\end{align}
Then the SDE \eqref{e1} with any initial value $x_0\in \mathbb{R}^d$ has a unique regular solution $X(t)$ satisfying
\begin{align}\la{1.5}
\sup_{0\leq t\leq T}\E V^{\rho}(X(t))
 \leq  C,\qquad \forall T>0.
\end{align}
\end{theorem}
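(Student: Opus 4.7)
The plan is a standard Khasminskii truncation argument tailored to the composite Lyapunov function $(1+V)^\rho$. Since $f$ and $g$ are locally Lipschitz, classical theory gives a unique maximal strong solution $X(t)$ on $[0,\tau_\infty)$, where $\tau_\infty$ is the explosion time. For each integer $N\geq|x_0|$, I would introduce the stopping times $\tau_N:=\inf\{t\geq 0:|X(t)|\geq N\}$, so that $\tau_N\uparrow\tau_\infty$ almost surely, and my task reduces to showing $\tau_\infty=\infty$ a.s.\ together with the desired moment bound.

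First, apply It\^o's formula to $(1+V(X(t)))^\rho$ on $[0,t\wedge\tau_N]$. A direct chain-rule computation expresses the drift of this process as $\mathcal{L}(1+V(x))^\rho$, which is precisely the quantity appearing on the left-hand side of \eqref{1.4}. On the stopped interval all relevant quantities are bounded, so the stochastic integral term is a true martingale; taking expectations and applying hypothesis \eqref{1.4} yields
\begin{equation*}
\E(1+V(X(t\wedge\tau_N)))^\rho \leq (1+V(x_0))^\rho +\lambda\int_0^t\E\bigl[1+V^\rho(X(s\wedge\tau_N))\bigr]\,\mathrm{d}s.
\end{equation*}
Since $V\geq 0$ and $\rho>0$ give $1+V^\rho\leq 2(1+V)^\rho$, the integrand is controlled by $2\E(1+V(X(s\wedge\tau_N)))^\rho$, and Gronwall's inequality delivers
\begin{equation*}
\sup_{0\leq t\leq T}\E(1+V(X(t\wedge\tau_N)))^\rho\leq C_T,
\end{equation*}
with $C_T$ depending on $T$, $\lambda$, $\rho$, and $V(x_0)$ but \emph{not} on $N$.

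The non-explosion step is then immediate: for any $T>0$,
\begin{equation*}
\PP(\tau_N\leq T)\cdot\inf_{|x|\geq N}(1+V(x))^\rho\leq \E(1+V(X(T\wedge\tau_N)))^\rho\leq C_T,
\end{equation*}
and since $V\in\mathcal{C}_\infty^2$ forces $\inf_{|x|\geq N}V(x)\to\infty$, one gets $\PP(\tau_N\leq T)\to 0$, hence $\tau_\infty=\infty$ almost surely and $X$ is regular on $[0,\infty)$. Letting $N\to\infty$ and invoking Fatou's lemma transfers the uniform bound to the full process $X(t)$, producing \eqref{1.5}.

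The main nuisance---rather than a serious obstacle---is verifying the algebraic identity displayed in \eqref{1.4}: the chain rule applied to $\phi(V)=(1+V)^\rho$ contributes both a first-derivative piece proportional to $\rho(1+V)^{\rho-1}\mathcal{L}V$ and a second-derivative cross term proportional to $\rho(\rho-1)(1+V)^{\rho-2}|D^{(1)}V\,g|^2$, and collecting these reproduces the bracketed expression in \eqref{1.4}. Everything else---localization, It\^o, Gronwall, Fatou---is entirely routine once this identity is in hand.
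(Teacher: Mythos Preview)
Your proposal is correct and matches what the paper does: the paper gives no explicit proof of this theorem, instead remarking immediately afterward that setting $\bar V(x)=(1+V(x))^\rho$ reduces the hypothesis to the classical Khasminskii condition $\mathcal{L}\bar V\leq\bar\lambda(1+\bar V)$ and citing \cite[Theorem 3.5, p.75]{Khasminski80}. Your localization--It\^o--Gronwall--Fatou argument is precisely the standard proof of that cited result, so you have simply written out what the paper leaves as a reference.
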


\begin{remark}
In fact  this is the Khasminskii test as if we set $\bar V(x) = (1+ V(x))^{\rho}$.
In other words, the conditions of Theorem \ref{th1}  is an alternative to Khasminskii's condition that there exists a positive constant $\bar{\lambda}$ such that
${\mathcal{L}}\bar V(x) \leq \bar{\lambda}(1+\bar V(x))$, see \cite[Theorem 3.5, p.75]{Khasminski80}.
\end{remark}

\section{$V$-integrability  and strong convergence}\label{3s-c}
In this section, we aim  to construct an easily implementable explicit scheme and show that its numerical solution
 converges strongly to the exact solution of SDE \eqref{e1}. %The difficulty in exploiting the integrability of numerical schemes is due to the lack of a discrete version of It\^{o}'s formula. As an alternative method we use the iterations method to finish However,
  If it can efficiently prevent the diffusion term from producing extra-ordinary large value, the numerical method will keep the properties of  the exact solution  by using  the Taylor expansion.
Thus we define the explicit scheme by the appropriate truncation map.

Let $V\in   \mathcal{V}^{4}_{\delta_4}$ for some  integer
 $1/\delta_4\in [4, +\infty)$.
 To define  appropriate numerical solutions, we choose a strictly increasing continuous  function  $\f: \bar{\RR}_+\rightarrow \bar{\RR}_+$ such that $\f(u)\rightarrow \infty$
as $u\rightarrow \infty$ and
\be\la{e21}
\sup_{|x|\leq u}  \left(\frac{ |f (  x)|}{\big(1+V(x)\big)^{\delta_4}}\vee\frac{| g(  x)|^2 }{\big(1+V(x)\big)^{2\delta_4}}\right)\leq \f(u),\qquad\forall~ u\geq 1.
\ee
 We note that $\f$ is well defined since $f(x)$ and $g(x)$ are locally bounded
 in $x$.
 Denote by $\f^{-1} $ the inverse function of $\f$, obviously $\f^{-1}: [\f(1),\infty)\rightarrow \bar{\RR}_+ $ is  a strictly increasing continuous function.
We also choose  a pair of positive
constants $\t^*\in (0,1]$ and  $K$
 such that
 $
K(\t^{*})^{-\theta}\geq \f(|x_0|\vee 1)
$
  holds for some $0<\theta\leq 1/2$, where  $K$  is independent of the iteration order  $k$ and the time stepsize $\t$. For  the given $\t\in (0, \t^*]$,   define a truncation mapping $\pi_{\t}:\RR^d\ra \RR^d　$ by
 \begin{align*}
\pi_\t(x)= \Big(|x|\wedge \f^{-1}\big(K \t^{-\theta}\big)\Big) \frac{x}{|x|},
\end{align*}
where we let  $\frac{x}{|x|}=\mathbf{0}$  as $x=\mathbf{0}\in \mathbb{R}^d$.
Obviously,
 \begin{align*}%\la{e23}
& |f( \pi_{\t}(x))|
\leq  \f(\f^{-1}(K \t^{-\theta})) \big(1+V(\pi_{\t}(x))\big)^{\delta_4} =     K \t^{-\theta}\big(1+V(\pi_{\t}(x))\big)^{\delta_4},\nn\\
% \end{equation*}
% and
% \begin{equation*}%\la{e23}
&|g( \pi_{\t}(x))|^2
\leq  \f(\f^{-1}(K \t^{-\theta}))\big(1+V(\pi_{\t}(x))\big)^{ 2 \delta_4}  =     K \t^{-\theta}\big(1+V(\pi_{\t}(x))\big)^{ 2 \delta_4}
 \end{align*}
for any   $x\in \RR^d.$

  Next we propose our numerical method  to approximate the exact solution of the SDE (\ref{e1}).    For any given stepsize $\t\in (0, \t^*]$, define
 \begin{align}\la{Y_0}
\left\{
\begin{array}{ll}
Y_0=x_0,&\\
\tilde{Y}_{k+1}=Y_k+f(Y_k)\t +g(Y_k)\t B_k,& \\
Y_{k+1}=\pi_\t(\tilde{Y}_{k+1}),&
\end{array}
\right.
\end{align}
for any integer $k\geq 0$, where $t_k=k \t$ and  $\t B_k=(\t B^{(1)}_k,\ldots, \t B^{(m)}_k)^{T}=B(t_{k+1})-B(t_{k})$. The explicit method   \eqref{Y_0}  is called  the {\it $V$-truncated EM scheme} which  modifies the values of nodes before each iteration avoiding the extra-ordinary large deviations. One further observes that
\begin{align}\la{Y_01}
|f (Y_k) |  \leq K \t^{-\theta}\big(1+V(Y_k)\big)^{\delta_4},\qquad |g( Y_k)|^2 \leq  K \t^{-\theta}  \big(1+V(Y_k)\big)^{2\delta_4}.
\end{align}
To obtain the continuous-time approximations,   define $\tilde{Y}(t)$ and $Y(t)$   by
 \begin{align*}%\la{Y_02}
\tilde{Y}(t):=\tilde{Y}_{k},\qquad  Y(t):=Y_k,\quad \forall t\in[t_k,  t_{k+1}).
\end{align*}
  We write $\mathbb{E}_k[\cdot]:=\mathbb{E} [\cdot|\mathcal{F}_{t_k} ]$ for simplicity. The following  lemmas will play their important roles  in the  proof  of the $V$-integrability of the numerical solutions.

\begin{lemma}\la{le1.3}
If $V\in   \mathcal{V}^{4}_{\delta_4}$ for some integer $1/\delta_4\in [4, +\infty)$,
  then the $V$-truncated EM scheme (\ref{Y_0}) has the property that
\begin{align}\la{neq:9}
\sum_{|\alpha|=1}^{3} \frac{ D^{\alpha} V (Y_k) }{\alpha!} \big(\tilde{Y}_{k+1}-Y_k\big)^{\alpha}
\leq\mathcal{L}V(Y_{k})\t+\mathcal{R}^{\t}V(Y_k)+\sum_{i=1}^{3}\mathcal{S}_{i}^{\t}V(Y_k),
\end{align}
where
 $\mathcal{S}_{1}^{\t}V(\cdot), \mathcal{S}_{2}^{\t}V(\cdot)$ and $\mathcal{S}_{3}^{\t}V(\cdot)$  are defined by
\eqref{fA.1}, \eqref{fA.3}, \eqref{fA.5}, respectively,
\begin{align*}
\mathcal{R}^{\t}V(Y_k):=C\sum_{i=2}^3\sum_{j=0}^{i-2}|f( Y_k)|^{i-2j}|g(Y_k)|^{2j}|D^{(i)} V (Y_k)|\t^{i-j}.
\end{align*}
We also have $
\mathbb{E}_k\big[\mathcal{S}_{i}^{\t}V(Y_k)\big]=0$ for $i=1,2,3$.
\end{lemma}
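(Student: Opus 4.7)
The plan is to prove \eqref{neq:9} by Taylor-identifying each summand on the left, substituting the scheme's increment $\tilde{Y}_{k+1}-Y_k = f(Y_k)\t + g(Y_k)\t B_k$, and sorting the resulting monomials into three classes: a deterministic piece that reconstructs $\mathcal{L}V(Y_k)\t$, zero-conditional-mean stochastic residuals that match the $\mathcal{S}_i^\t V(Y_k)$ of \eqref{fA.1}, \eqref{fA.3}, \eqref{fA.5}, and a deterministic remainder bounded in absolute value by $\mathcal{R}^\t V(Y_k)$.

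First I would, for each multi-index $\alpha$ with $1 \leq |\alpha| \leq 3$, expand the symmetric tensor $(\tilde Y_{k+1}-Y_k)^{\otimes |\alpha|}$ paired against $D^{(|\alpha|)}V(Y_k)/\alpha!$ by the multinomial identity, producing monomials whose scalar coefficients are products of components of $f(Y_k)$ and $g(Y_k)$, multiplied by appropriate powers of $\t$ and of the components of $\t B_k$. From $|\alpha|=1$ the $f$-piece gives $\langle D^{(1)}V(Y_k), f(Y_k)\rangle \t$. From $|\alpha|=2$ the $g^{\otimes 2}$-piece, after extracting the diagonal part matched by $\mathbb{E}_k[\t B_k^{(p)}\t B_k^{(q)}]=\delta_{pq}\t$, contributes $\tfrac12 \mathrm{tr}\big[g^T(Y_k) D^{(2)}V(Y_k) g(Y_k)\big]\t$. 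Together these reconstruct $\mathcal{L}V(Y_k)\t$.

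Next I would group the zero-conditional-mean residuals by order: the $|\alpha|=1$ piece $\langle D^{(1)}V(Y_k), g(Y_k)\t B_k\rangle$ furnishes $\mathcal{S}_1^\t V(Y_k)$; from $|\alpha|=2$ the cross monomial proportional to $(f(Y_k)\t)\otimes(g(Y_k)\t B_k)$ together with the centred piece $(g(Y_k)\t B_k)^{\otimes 2}-\mathbb{E}_k[(g(Y_k)\t B_k)^{\otimes 2}]$ furnish $\mathcal{S}_2^\t V(Y_k)$; and from $|\alpha|=3$ the odd-in-$\t B_k$ monomials in $(f(Y_k)\t)^{\otimes 2}\otimes(g(Y_k)\t B_k)$ and $(g(Y_k)\t B_k)^{\otimes 3}$, together with the centred part of $(f(Y_k)\t)\otimes(g(Y_k)\t B_k)^{\otimes 2}$, furnish $\mathcal{S}_3^\t V(Y_k)$. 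The identities $\mathbb{E}_k[\mathcal{S}_i^\t V(Y_k)]=0$ follow directly from the independence of $\t B_k$ from $\mathcal{F}_{t_k}$ and from the Gaussian moment identities $\mathbb{E}[\t B_k^{(p)}]=\mathbb{E}[\t B_k^{(p)}\t B_k^{(q)}\t B_k^{(r)}]=0$, combined with the centering built into the quadratic pieces.

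The remaining summands are purely deterministic: from $|\alpha|=2$ the $(f(Y_k)\t)^{\otimes 2}$ term, and from $|\alpha|=3$ the $(f(Y_k)\t)^{\otimes 3}$ term together with $\mathbb{E}_k\big[(f(Y_k)\t)\otimes(g(Y_k)\t B_k)^{\otimes 2}\big]$. Passing to absolute values, each is dominated by $|D^{(i)}V(Y_k)|\,|f(Y_k)|^{i-2j}\,|g(Y_k)|^{2j}\,\t^{i-j}$ with $i\in\{2,3\}$ and $0\le j\le i-2$, and absorbing the finite combinatorial constants into a universal $C$ yields exactly the bound $\mathcal{R}^\t V(Y_k)$; this is where \eqref{neq:9} becomes an inequality rather than an identity. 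The main obstacle is the combinatorial bookkeeping of the $|\alpha|=3$ multinomial expansion: one must check that every monomial is routed to either the correct $\mathcal{R}$-term or the designated $\mathcal{S}_i^\t V$ consistently with the definitions in \eqref{fA.1}, \eqref{fA.3}, \eqref{fA.5}. Once that tabulation is in place, both the displayed inequality and the three identities $\mathbb{E}_k[\mathcal{S}_i^\t V(Y_k)]=0$ can be read off the expansion.
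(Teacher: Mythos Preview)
Your proposal is correct and follows essentially the same route as the paper: a multinomial expansion of $(f(Y_k)\t+g(Y_k)\t B_k)^\alpha$ for $|\alpha|=1,2,3$, identification of $\mathcal{L}V(Y_k)\t$ from the first-order drift piece and the diagonal of the second-order diffusion piece, routing the zero-mean stochastic monomials into $\mathcal{S}_1^\t,\mathcal{S}_2^\t,\mathcal{S}_3^\t$, and absolute-value bounding the leftover deterministic terms by $\mathcal{R}^\t V(Y_k)$. One small point to watch in the bookkeeping you flag: in the paper's definition \eqref{fA.5} the $f\otimes g^{\otimes 2}$ contribution at $|\alpha|=3$ is first replaced by its crude upper bound $C|D^{(3)}V(Y_k)||g(Y_k)|^2|f(Y_k)||\t B_k|^2\t$ and only then centred via $|\t B_k|^2-K_2\t$, rather than centring the original multinomial expression directly; matching that specific form is what makes \eqref{neq:9} an inequality at this step.
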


\begin{lemma}\la{Z:1}
%For any given integer $\delta_4\in [4,+\infty)$ such that   $V\in   \mathcal{V}^{4}_{\delta_4}$,
If $V\in   \mathcal{V}^{4}_{\delta_4}$ for some integer $1/\delta_4\in [4, +\infty)$,
we then have
 \begin{align}\la{lyhf1}
%\begin{split}
\big| \mathcal{L}V(Y_{k})\big| \leq 2cK\big(1+V(Y_{k})\big) \t^{-\theta},\qquad
  \mathcal{R}^{\t}V(Y_k)
\leq C\big(1+V(Y_{k})\big)\t^{2(1-\theta)},
%\end{split}
\end{align}
where  $\mathcal{L}V(\cdot)$ is defined by \eqref{1e1.2}.  Moreover, we  have
\begin{align*}%\la{lyhf2}
\mathbb{E}_k\big[|\mathcal{S}_{1}^{\t}V(Y_k)|^2\big] = | D^{(1)}V(Y_{k}) g( Y_{k})|^2\t,\ \
\mathbb{E}_k\big[|\mathcal{S}_{i}^{\t}V(Y_k)|^2\big] \leq C\big(1+V(Y_{k})\big)^2\t^{1-\theta}
\end{align*}
for $i=1, 2, 3$ and
\begin{align*}%\la{lyhf2}
  \mathbb{E}_k\big[\mathcal{S}_{1}^{\t}V(Y_k)\mathcal{S}_{j}^{\t}V(Y_k)\big] \geq- C \big(1+V(Y_{k})\big)^2\t^{2(1-\theta)}\quad \mathrm{for}\ \ j= 2, 3.
\end{align*}

  \end{lemma}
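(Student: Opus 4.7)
The plan is to combine two families of bounds. First, the membership $V\in\mathcal{V}^{4}_{\delta_{4}}$ supplies the pointwise growth estimate $|D^{(n)}V(Y_{k})|\leq c(1+V(Y_{k}))^{1-n\delta_{4}}$ for $n=1,2,3,4$. Second, the truncation built into the scheme yields \eqref{Y_01}: $|f(Y_{k})|\leq K\t^{-\theta}(1+V(Y_{k}))^{\delta_{4}}$ and $|g(Y_{k})|^{2}\leq K\t^{-\theta}(1+V(Y_{k}))^{2\delta_{4}}$. The exponent $\delta_{4}$ is calibrated precisely so that whenever these two families of bounds are paired, the $V$-powers telescope to an integer, leaving only $\t$-powers and the truncation constant $K$ to keep track of.

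For the deterministic estimates, I would expand $\mathcal{L}V(Y_{k})$ via \eqref{1e1.2} and apply Cauchy--Schwarz: the drift term is controlled by $|D^{(1)}V(Y_{k})||f(Y_{k})|\leq cK(1+V(Y_{k}))\t^{-\theta}$, and the trace term by $\tfrac{1}{2}|D^{(2)}V(Y_{k})||g(Y_{k})|^{2}\leq \tfrac{1}{2}cK(1+V(Y_{k}))\t^{-\theta}$; the sum fits inside $2cK(1+V(Y_{k}))\t^{-\theta}$. For $\mathcal{R}^{\t}V$ I would check each summand indexed by $(i,j)$: the polynomial exponent equals $(1-i\delta_{4})+(i-2j)\delta_{4}+2j\delta_{4}=1$, while the $\t$-exponent coming from the two truncation factors $\t^{-\theta}$ together with the explicit $\t^{i-j}$ in the definition equals $(i-j)(1-\theta)$. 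The smallest such exponent over $i\in\{2,3\}$, $0\leq j\leq i-2$, is $2(1-\theta)$, and since $\t\leq 1$ and $1-\theta\geq 1/2$ the higher-order terms are absorbed into $\t^{2(1-\theta)}$.

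Turning to the stochastic quantities, the identity $\mathbb{E}_{k}|\mathcal{S}_{1}^{\t}V(Y_{k})|^{2}=|D^{(1)}V(Y_{k})g(Y_{k})|^{2}\t$ is a direct covariance computation for the linear martingale increment $D^{(1)}V(Y_{k})g(Y_{k})\t B_{k}$ encoded in \eqref{fA.1}, using $\mathbb{E}[\t B_{k}(\t B_{k})^{T}]=\t I_{m}$. The $L^{2}$-bounds on $\mathcal{S}_{2}^{\t}V$ and $\mathcal{S}_{3}^{\t}V$ are then obtained by bounding each appearing factor $|D^{(n)}V|$, $|f|$, $|g|$ as in the preceding paragraph and inserting the Gaussian moment estimate $\mathbb{E}|\t B_{k}|^{2q}\leq C_{q}\t^{q}$; the polynomial exponents again collapse to $(1+V(Y_{k}))^{2}$, and a direct accounting of $\t$-powers shows the worst term is of order $\t^{1-\theta}$.

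The most delicate step is the lower bound on the cross moments $\mathbb{E}_{k}[\mathcal{S}_{1}^{\t}V\,\mathcal{S}_{j}^{\t}V]$ for $j=2,3$, since a naive Cauchy--Schwarz from the preceding bound only yields the insufficient order $\t^{1-\theta}$. My plan is to expand the product $\mathcal{S}_{1}^{\t}V\,\mathcal{S}_{j}^{\t}V$ into monomials in the components of $\t B_{k}$ and evaluate conditional expectations monomial by monomial: odd-order Gaussian moments vanish, so only the even-order pairings survive, and each surviving monomial either contains a centering factor of the form $\t B^{(i)}_{k}\t B^{(l)}_{k}-\delta_{il}\t$ or a deterministic $f\t$ piece coming from the drift part of $\tilde Y_{k+1}-Y_{k}$. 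Either way an extra power of $\t$ is gained beyond the naive count, and, combined with the polynomial collapse to $(1+V(Y_{k}))^{2}$ and the truncation $\t^{-2\theta}$, this produces $|\mathbb{E}_{k}[\mathcal{S}_{1}^{\t}V\,\mathcal{S}_{j}^{\t}V]|\leq C(1+V(Y_{k}))^{2}\t^{2(1-\theta)}$, from which the advertised lower bound is immediate.
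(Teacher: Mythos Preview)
Your proposal is correct and follows essentially the same route as the paper's Appendix proof: combine the $\mathcal{V}^{4}_{\delta_4}$ derivative bounds with the truncation estimates \eqref{Y_01} so that the $(1+V)$-exponents telescope to an integer, then for the cross terms $\mathbb{E}_k[\mathcal{S}_1^{\t}V\,\mathcal{S}_j^{\t}V]$ expand the product, discard odd-order Gaussian moments, and observe that each surviving piece carries either an explicit $f\t$ factor or a quartic Brownian moment, in either case yielding the extra $\t$ power. The paper makes this explicit by writing each cross product as $-C(1+V(Y_k))^{2}\t^{2(1-\theta)}+\mathcal{H}_{1,j}^{\t}V(Y_k)$ with $\mathbb{E}_k[\mathcal{H}_{1,j}^{\t}V(Y_k)]=0$ (see \eqref{H_12}, \eqref{H_13}), but the underlying computation is exactly the one you describe.
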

The proofs of both  lemmas above can be found in  Appendix A.
 Let us begin to establish the criterion on
  the $V$-integrability  of  the scheme \eqref{Y_0}. It makes use of Lemma \ref{le1.3} and Lemma \ref{Z:1} above.

  \begin{theorem}\la{co3.1}   Let the conditions of Theorem \ref{th1} hold.  Assume moreover that the function
  $V\in  \mathcal{V}^{4}_{\delta_4}$ for some integer $1/\delta_4\in [4, +\infty)$ and has the property
$$
 V(\epsilon x)\leq V(x) \qquad   \forall\, x \in \mathbb{R}^d,\ \ 0<\epsilon\leq 1.$$
Then  the truncation scheme defined by \eqref{Y_0} has the property
\begin{align}\la{3.22y}
\sup_{\t\in(0,\t^*]}\sup_{0\leq k \t\leq T} \E V^{\rho}(Y_{k})
 \leq  C, \qquad\forall T>0.
\end{align}
\end{theorem}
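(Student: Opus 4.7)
The plan is to derive a one-step recursion $\E(1+V(Y_{k+1}))^\rho \leq (1+C\t)\E(1+V(Y_k))^\rho + C\t$ and then finish with the discrete Gronwall inequality over $0\leq k\t\leq T$; the claim \eqref{3.22y} then follows from the pointwise bound $V^\rho\leq (1+V)^\rho$.

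First I would exploit the scaling monotonicity hypothesis to replace the truncated iterate by the Euler iterate. By construction $\pi_\t(\tilde Y_{k+1})=\epsilon_k\,\tilde Y_{k+1}$ for some random $\epsilon_k=(|\tilde Y_{k+1}|\wedge \f^{-1}(K\t^{-\theta}))/|\tilde Y_{k+1}|\in(0,1]$, so the hypothesis $V(\epsilon x)\leq V(x)$ forces $V(Y_{k+1})\leq V(\tilde Y_{k+1})$ and hence $(1+V(Y_{k+1}))^\rho\leq (1+V(\tilde Y_{k+1}))^\rho$. It therefore suffices to estimate $\E_k(1+V(\tilde Y_{k+1}))^\rho$ in terms of $(1+V(Y_k))^\rho$, where $\tilde Y_{k+1}-Y_k=f(Y_k)\t+g(Y_k)\t B_k$ is a genuine one-step Euler increment.

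For that estimate I would write $(1+V(\tilde Y_{k+1}))^\rho=(1+V(Y_k))^\rho\bigl(1+\Delta_k/(1+V(Y_k))\bigr)^\rho$ with $\Delta_k:=V(\tilde Y_{k+1})-V(Y_k)$, Taylor-expand the outer power $(1+u)^\rho$ to second order in $u=\Delta_k/(1+V(Y_k))$ with a cubic remainder, and substitute the expansion of $\Delta_k$ furnished by Lemma~\ref{le1.3}. Taking $\E_k[\cdot]$, the leading contributions combine into exactly
\[\rho(1+V(Y_k))^{\rho-1}\mathcal{L}V(Y_k)\t+\tfrac{\rho(\rho-1)}{2}(1+V(Y_k))^{\rho-2}|D^{(1)}V(Y_k)g(Y_k)|^2\t,\]
which is precisely the left-hand side of \eqref{1.4} evaluated at $Y_k$, hence at most $\lambda(1+V^\rho(Y_k))\t$; this is the Khasminskii cancellation providing the benign linear-in-$\t$ driver. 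The remaining pieces---the deterministic remainder $\mathcal{R}^\t V(Y_k)$, the cubic Taylor remainder from expanding $(1+u)^\rho$, the zero-mean martingale terms $\mathcal{S}_i^\t V(Y_k)$ after squaring and cross-multiplication, and the fourth-order Taylor remainder for $V$ itself (which exists because $V\in\mathcal{C}_\infty^4$)---are estimated by combining the polynomial growth bounds $|D^{(n)}V|\leq c(1+V)^{1-n\delta_4}$ coming from $V\in\mathcal{V}^4_{\delta_4}$, the truncation estimates \eqref{Y_01}, and the conditional-moment bounds of Lemma~\ref{Z:1}. The essential accounting is that each factor of $\t^{-\theta}$ produced by the truncation is paid off by at least one extra power of $\t$ supplied by even-order Brownian moments, while the condition $1/\delta_4\geq 4$ guarantees that the accumulated exponents on $(1+V(Y_k))$ never exceed $\rho$; the net remainder is $O(\t^{1+\kappa})$ times $(1+V(Y_k))^\rho$ for some $\kappa>0$.

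Taking total expectations assembles the recursion, and discrete Gronwall on $k\t\leq T$ yields $\sup_k\E(1+V(Y_k))^\rho\leq C_T$. The main obstacle is the bookkeeping in the Taylor-expansion step: one must simultaneously match the chain-rule exponents generated by expanding $(1+V)^\rho$ with the exponents appearing in the third-order expansion of $V$ from Lemma~\ref{le1.3}, and in particular must absorb the cross moments $\E_k[\mathcal{S}_1^\t V\,\mathcal{S}_j^\t V]$ (for which Lemma~\ref{Z:1} only provides the lower bound $-C(1+V(Y_k))^2\t^{2(1-\theta)}$) in such a way that they reinforce, rather than obstruct, the Khasminskii cancellation coming from \eqref{1.4}.
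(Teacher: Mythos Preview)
Your proposal is correct and mirrors the paper's proof: reduce to $\tilde Y_{k+1}$ via the scaling hypothesis, factor $(1+V(\tilde Y_{k+1}))^\rho=(1+V(Y_k))^\rho(1+\xi_k)^\rho$, expand to cubic order in $\xi_k$, and combine the linear and quadratic conditional means into $\mathcal L(1+V)^\rho\,\t$ so that \eqref{1.4} furnishes the benign $\lambda\t$ driver; the one-sided cross-moment bounds you flag from Lemma~\ref{Z:1} are used exactly as you describe, to produce a \emph{lower} bound on $\E_k[\xi_k^2]$ (needed because $\rho(\rho-1)/2<0$). One refinement worth adopting: instead of a Lagrange-type Taylor remainder for $(1+u)^\rho$---which is dangerous because $\xi_k$ may approach $-1$ and $(1+\tilde u)^{\rho-3}$ would blow up---the paper first reduces to $0<\rho\le1$ and then invokes the global algebraic inequality $(1+u)^\rho\le 1+\rho u+\tfrac{\rho(\rho-1)}{2}u^2+\tfrac{\rho(\rho-1)(\rho-2)}{6}u^3$ for all $u>-1$, cited from \cite{Li18}; this makes the cubic term an honest polynomial (with positive coefficient, so an upper bound on $\E_k[\xi_k^3]$ suffices) rather than a remainder with an uncontrolled prefactor.
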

\begin{proof}
Due to  $V \in \mathcal{V}^{4}_{\delta_4}$, using the Taylor formula with integral remainder term, %we get
\begin{align}\la{2*}
V(\tilde{Y}_{k+1})
=&V(Y_{k})+\sum_{|\alpha|=1}^{3}\frac{ D^{\alpha} V (Y_k) }{\alpha!} \big(\tilde{Y}_{k+1}-Y_k\big)^{\alpha}+J(\tilde{Y}_{k+1}, Y_k),
\end{align}
where
\begin{align*}
J(\tilde{Y}_{k+1}, Y_k)
:= 4\sum_{|\alpha|=4}\frac{\big(\tilde{Y}_{k+1}- Y_k\big)^{\alpha}}{\alpha!}\int_{0}^{1}(1-t)^{3} D^{\alpha}
 V\big(Y_k+t\big(\tilde{Y}_{k+1}- Y_k\big)\big)\mathrm{d}t.
\end{align*}
One observes that
\begin{align*}
 \big| J(\tilde{Y}_{k+1}, Y_k)\big|
\leq& 4\sum_{|\alpha|=4}  \frac{\big|\big(\tilde{Y}_{k+1}- Y_k\big)^{\alpha}\big|}{\alpha!} \int_{0}^{1}(1-t)^{3} \big|
 D^{(4)} V\big(Y_k+t\big(\tilde{Y}_{k+1}- Y_k\big)\big)\big|\mathrm{d}t\nn\\
 \leq& \frac{c}{3!}\bigg(\sum_{i=1}^{d} \Big|f_{i}( Y_k)\t  +\sum_{j=1}^{m}g_{ij}(Y_k) \t B_{k}^{(j)}\Big|\bigg)^4 \nn\\
 &\qquad \quad \quad \times\int_{0}^{1}(1 -t)^{3}\Big[1+
 V\big(Y_k+t\big(\tilde{Y}_{k+1} - Y_k\big)\big)\Big]^{1- 4\delta_4}  \mathrm{d}t.
\end{align*}
Note that for any  $U\in \mathcal{V}^{4}_{\delta_4}$ we know
$|D^{(1)} (1+U(x) ) |\leq c (1+U(x) )^{1-\delta_4}$. By the result of \cite[Lemma 2.12, p.22]{Hutzenthaler15} we have
\begin{align*}%\la{hutze_1}
1+U(x+y)\leq c^{\frac{1}{\delta_4}}2^{\frac{1}{\delta_4}-1}\Big(\big|1+U(x)\big| +|y|^{\frac{1}{\delta_4}}\Big),\qquad\forall x,y\in \mathbb{R}^d,
\end{align*}
which leads to
\begin{align*}
  \Big[1+
 V\big(Y_k+t\big(\tilde{Y}_{k+1}- Y_k\big)\big)\Big]^{1-  4 \delta_4}
  \!\!\leq & \Big[c^{\frac{1}{\delta_4}}2^{\frac{1}{\delta_4}-1}\Big(1\!+ V(Y_k)+\!  t^{\frac{1}{\delta_4}} |\tilde{Y}_{k+1}- Y_k|^{\frac{1}{\delta_4}}\Big) \Big]^{1-  4 \delta_4}\nn\\
 \leq & C\Big[ \big(1+V(Y_k)\big)^{1- 4 \delta_4} + |\tilde{Y}_{k+1}- Y_k|^{\frac{1}{\delta_4} -4}\Big]
\end{align*}
for any $V\in \mathcal{V}^{4}_{\delta_4}$ with  $1/\delta_4\in [4, +\infty)$.
Therefore, we derive from  \eqref{Y_01} that %for any integer $k\geq0$,
\begin{align}\la{EQ1}
\big|J(\tilde{Y}_{k+1}, Y_k)\big|
\leq&C \Big[\Big(|f( Y_k)|^{4}\t^{4}+|g( Y_k)|^{4}|\t B_k|^{4}\Big) \big(1+V(Y_k)\big)^{1- 4 \delta_4 }\nn\\
&\qquad\qquad+ \Big(|f( Y_k)|^{\frac{1}{\delta_4}}\t^{\frac{1}{\delta_4}}+|g( Y_k)|^{\frac{1}{\delta_4}}|\t B_k|^{\frac{1}{\delta_4}}\Big)  \Big]\nn\\
\leq&C \bigg\{\Big[\big(1+V(Y_k)\big)^{ 4 \delta_4} \t^{4(1-\theta)}\nn\\
&\qquad\qquad+\big(1+V(Y_k)
\big)^{ 4 \delta_4} \t^{-2\theta} |\t B_k|^{4}\Big] \big(1+V(Y_k)\big)^{1- 4 \delta_4} \nn\\
&\qquad\qquad+  \big(1+V(Y_k)\big) \t^{\frac{1-\theta}{\delta_4}}
+\big(1+V(Y_k)\big) \t^{\frac{-\theta}{2\delta_4}} |\t B_k|^{\frac{1}{\delta_4}}   \bigg\}\nn\\
%\leq&C\big(1+V(Y_k)\big) \Big[ \t^{4(1-\theta)}+ \t^{-2\theta} |\t B_k|^{4}
% +  \t^{\frac{1-\theta}{\delta_4}}+\t^{\frac{-\theta}{2\delta_4}}|\t B_k|^{\frac{1}{\delta_4}}  \Big]\nn\\
\leq&
C\big(1+V(Y_k)\big)\t^{4(1-\theta)}+\mathcal{J}^{\t}V(Y_k),
\end{align}
where
\begin{align}\la{EQ*1}
\mathcal{J}^{\t}V(Y_k)
=C\big(1+V(Y_k)\big)\big(\t^{-2\theta}|\t B_k|^{4}
 + \t^{\frac{-\theta}{2\delta_4}}|\t B_k|^{\frac{1}{\delta_4}}  \big).
\end{align}
  For any $\rho>0$,  substituting \eqref{neq:9} and \eqref{EQ1} into \eqref{2*}, then using   the second inequality of \eqref{lyhf1},  we yield that
\begin{align}\la{yv3.20}
\big(1+V(\tilde{Y}_{k+1})\big)^{\rho}
%\leq&\Big(1+V(Y_{k})+\mathcal{L}V(Y_{k})\t
%+\mathcal{R}^{\t}V(Y_k)+\sum_{i=1}^{3}\mathcal{S}_{i}^{\t}V(Y_k)  +J(\tilde{Y}_{k+1}, Y_k)\Big)^{\rho}\nn\\
\leq& \big(1+V(Y_{k})\big)^{\rho}\big(1+\xi_k\big)^{\rho},
\end{align}
where
\begin{align*}
\xi_k=\frac{\mathcal{L}V(Y_{k}) \t+C \big(1+V(Y_k)\big) \t^{2(1-\theta)} +\sum_{i=1}^{3}\mathcal{S}_{i}^{\t}V(Y_k)
+\mathcal{J}^{\t}V(Y_k)}{1+V(Y_{k})},
\end{align*}
and  we can see that $\xi_k>-1$. By the virtue of \cite[Inequality (3.12)]{Li18},
without loss of  generality we prove \eqref{3.22y} only for $0<\rho\leq 1$.   It follows from
\eqref{yv3.20} that
\begin{align}\la{vys1}
\mathbb{E}_k\Big[\big(1+V(\tilde{Y}_{k+1})\big)^{\rho}\Big]
\leq\big(1+V(Y_{k})\big)^{\rho}\bigg(&1+ \rho\mathbb{E}_k\big[\xi_k\big]+\frac{\rho(\rho-1)}{2}
\mathbb{E}_k\big[\xi_k^2\big]\nn\\
&
+\frac{\rho(\rho-1)(\rho-2)}{6}\mathbb{E}_k\big[\xi_k^3\big]\bigg).
\end{align}
In order to  independently estimates each of the expectations on the right-hand side of   inequality \eqref{vys1}, we divide it into three steps.
%
%
%In order to estimate $\mathbb{E}_k\big[\xi_k\big]$, $\mathbb{E}_k\big[\xi_k^2\big]$  and  $\mathbb{E}_k\big[\xi_k^3\big]$ on the right-hand side of   inequality \eqref{vys1}, we divide it into three steps.
\\
{\bf Step 1.}   We estimate $\mathbb{E}_k\big[\xi_k\big]$. Due to \eqref{EQ*1}  and \eqref{E_k}, we deduce that
\begin{align}\la{y1.27}
\mathbb{E}_k\big[|\mathcal{J}^{\t}V(Y_k)|\big]
\leq&C \big(1+V(Y_k)\big) \big(\t^{2(1-\theta)} +\t^{\frac{ 1-\theta }{2\delta_4}} \big) \nn\\
\leq&C  \big(1+V(Y_k)\big)  \t^{2(1-\theta)}.
\end{align}
This together with  Lemma \ref{le1.3}  implies
\begin{align}\la{vys2}
 \mathbb{E}_k\big[\xi_k\big]
\leq& \big(1+V(Y_k)\big)^{-1}
 \Big[\mathcal{L}V(Y_{k})\t+C \big(1+V(Y_k)\big) \t^{2(1-\theta)}
  \Big]\nn\\
=&\big(1+V(Y_k)\big)^{-1}  \mathcal{L}V(Y_{k})\t+C\t^{2(1-\theta)}.
\end{align}
{\bf Step 2.}   We estimate $\mathbb{E}_k\big[\xi_k^2\big]$.
   Similar to \eqref{y1.27}, combining \eqref{fA.1} and \eqref{EQ*1} imply
\begin{align*}%\la{v3.18}
&\mathbb{E}_k\Big[\mathcal{S}_{1}^{\t}V(Y_k) \mathcal{J}^{\t}V(Y_k) \Big]\nn\\
=&C\big(1+V(Y_k)\big)\mathbb{E}_k\Big[\langle D^{(1)}V(Y_k), g( Y_k)\t B_k\rangle\Big( \t^{-2\theta}|\t B_k|^{4}
 + \t^{\frac{-\theta}{2\delta_4}} |\t B_k|^{\frac{1}{\delta_4}} \Big)\Big]\nn\\
\geq&-C\t^{\frac{-\theta}{2\delta_4}}\big(1+V(Y_k)\big)^{2-\delta_4}
 |g( Y_k)| \mathbb{E}_k\Big[|\t B_k|^{1+\frac{1}{\delta_4}} \Big]\!
 \geq -C\t^{2(1-\theta )}\big(1+V(Y_k)\big)^2
\end{align*}
for  $1/\delta_4\in [4, +\infty)$.   This together with
  Lemma \ref{Z:1} implies
\begin{align}
\mathbb{E}_k\big[\xi_k^2\big]=& \big(1+V(Y_k)\big)^{-2}
\mathbb{E}_k\bigg[\Big( \mathcal{L}V(Y_{k})\t+C\big(1+V(Y_k)\big) \t^{2(1-\theta)}\nn\\
&\qquad\qquad\quad+\sum_{i=1}^{3}\mathcal{S}_{i}^{\t}V(Y_k)+ \mathcal{J}^{\t}V(Y_k)\Big)^2\bigg]\nn\\
\geq& \big(1+V(Y_k)\big)^{-2}
\mathbb{E}_k\bigg\{ |\mathcal{S}_{1}^{\t}V(Y_k)|^2+2\mathcal{S}_{1}^{\t}V(Y_k)\Big[\mathcal{L}V(Y_{k})\t \nn\\
&\qquad\qquad\quad+C\big(1+V(Y_k)\big) \t^{2(1-\theta)}+\sum_{i=2}^{3}\mathcal{S}_{i}^{\t}V(Y_k)
+ \mathcal{J}^{\t}V(Y_k)\Big] \bigg\} \nn\\
%%%%%%%%%%%%%%%%%%%%%%%%%%
%\geq& \big(1+V(Y_k)\big)^{-2}
%\mathbb{E}_k\bigg[ |\mathcal{S}_{1}^{\t}V(Y_k)|^2
%\!+2\mathcal{S}_{1}^{\t}V(Y_k)\bigg(\sum_{i=2}^{3}\mathcal{S}_{i}^{\t}V(Y_k)
%\!+ \mathcal{J}^{\t}V(Y_k)
% \bigg) \bigg] \nn\\
%%%%%%%%%%%%%%%%%%%%%%%%%%
\geq&  \big(1+V(Y_k)\big)^{-2} \bigg[ | D^{(1)}V(Y_{k})  g( Y_{k})|^2\t
+ 2\mathbb{E}_k\Big(\mathcal{S}_{1}^{\t}V(Y_k)\mathcal{S}_{2}^{\t}V(Y_k)\Big)\nn\\
&\qquad\qquad\quad
+ 2\mathbb{E}_k\Big(\mathcal{S}_{1}^{\t}V(Y_k)\mathcal{S}_{3}^{\t}V(Y_k)\Big)
+ 2\mathbb{E}_k\Big(\mathcal{S}_{1}^{\t}V(Y_k)\mathcal{J}^{\t}V(Y_k)\Big)\bigg] \nn\\
%%%%%%%%%%%%%%%%%%%
%%%%%%%%%%%%%%%%%%%
%\geq&\big(1+V(Y_k)\big)^{-2} \Big[| D^{(1)}V(Y_{k})  g( Y_{k})|^2\t-C\big(1+V(Y_k)\big)^{2}\t^{2(1+\theta)} \Big] \nn\\
%%%%%%%%%%%%%%%%%%%
\geq& \big(1+V(Y_k)\big)^{-2}| D^{(1)}V(Y_{k}) g( Y_{k})|^2\t
 - C  \t^{2(1-\theta)}.\la{vys3}
\end{align}
{\bf Step 3.}  We estimate $\mathbb{E}_k\big[\xi_k^3\big]$. Similar to \eqref{y1.27}, we can also prove that
\begin{align*}%\la{J_2}
%%%%%%%%%%%%%%%%%%%%%%%%%%%%%%%%%%%%%%%%%%%%%%%%%%%%%%%%%%%%%%%%%%%%%%%%%%%%%%%%%%%%%%%%%
  \mathbb{E}_k\big[|\mathcal{J}^{\t}V(Y_k)|^2\big]
\leq&C \big(1+V(Y_k)\big)^2 \mathbb{E}_k\Big[\t^{-4\theta}|\t B_k|^{8}
 + \t^{\frac{-\theta}{\delta_4}}|\t B_k|^{\frac{2}{\delta_4}} \Big]\nn\\
\leq&C  \big(1+V(Y_k)\big)^2  \t^{4(1-\theta)},
  \end{align*}
and
\begin{align*}%\la{J_3}
\mathbb{E}_k\big[|\mathcal{J}^{\t}V(Y_k)|^3\big]
\leq&C\big(1+V(Y_k)\big)^3 \mathbb{E}_k\Big[ \t^{-6\theta} |\t B_k|^{12}
 + \t^{\frac{-3\theta}{2\delta_4}}|\t B_k|^{\frac{3}{\delta_4}}   \Big]\nn\\
\leq&C \big(1+V(Y_k)\big)^3 \t^{6(1-\theta)}.
  \end{align*}
 This together with   \eqref{y1.27}  as well as Lemma \ref{Z:1}  implies
\begin{align}\la{xi_3}
 \mathbb{E}_k\big[\xi_k^3\big]
 %=&  \big(1\!+V(Y_k)\big)^{-3}
%\mathbb{E}_k\bigg[\Big( \mathcal{L}V(Y_{k})\t +C \big(1+V(Y_k)\big)  \t^{2(1-\theta)}\nn\\
% &~~~~~~~~~~~~~~~~~~~~~~~~~~~~~~~+  \sum_{i=1}^{3}\mathcal{S}_{i}^{\t}V(Y_k) +\mathcal{J}^{\t}V(Y_k)\Big)^3\bigg]\nn\\
\leq& \big(1+V(Y_k)\big)^{-3}
\mathbb{E}_k \Big[\Big(C\big(1+V(Y_k)\big) \t^{1-\theta} +\sum_{i=1}^{3}\mathcal{S}_{i}^{\t}V(Y_k)
+\mathcal{J}^{\t}V(Y_k)\Big)^3\Big] \nn\\
\leq& C\big(1+V(Y_k)\big)^{-3}
\mathbb{E}_k\bigg\{ \big(1\!+V(Y_k)\big)^3 \t^{3(1- \theta)} \!+\Big(\sum_{i=1}^{3}\mathcal{S}_{i}^{\t}V(Y_k)\!+\mathcal{J}^{\t}V(Y_k)\Big)^3\nn\\
&\qquad\qquad\qquad\qquad+   \big(1+V(Y_k)\big)^2  \t^{2(1-\theta)} \bigg( \sum_{i=1}^{3} \mathcal{S}_{i}^{\t}V(Y_k)
+\mathcal{J}^{\t}V(Y_k) \bigg) \nn\\
&\qquad\qquad\qquad\qquad+  \big(1+V(Y_k)\big)  \t^{1-\theta} \bigg( \sum_{i=1}^{3} \mathcal{S}_{i}^{\t}V(Y_k)
+\mathcal{J}^{\t}V(Y_k) \bigg)^2 \bigg\}
\nn\\
\leq& C\big(1+V(Y_k)\big)^{-3}
\bigg\{ \big(1+V(Y_k)\big)^{3} \t^{3(1-\theta)}+\mathbb{E}_k\Big[\big|\mathcal{J}^{\t}V(Y_k)\big|^3\Big]
\nn\\
&\!\!\! +\sum_{i=1}^{3}\mathbb{E}_k\Big[\big|\mathcal{S}_{i}^{\t}V(Y_k)\big|^2\big|\mathcal{J}^{\t}V(Y_k)\big| \Big]
%%+\sum_{i=1}^{3}\mathbb{E}_k\Big[\mathcal{S}_{i}^{\t}V(Y_k)\big|\mathcal{J}^{\t}V(Y_k)\big|^2\Big]
%\nn\\
%&~~~~~
+ \big(1+V(Y_k)\big)^{2} \t^{2(1- \theta)}   \mathbb{E}_k\Big[|\mathcal{J}^{\t}V(Y_k)|\Big] \nn\\
&\qquad  + \big(1+V(Y_k)\big)  \t^{1-\theta}  \bigg( \sum_{i=1}^{3} \mathbb{E}_k\Big[\big|\mathcal{S}_{i}^{\t}V(Y_k)\big|^2\Big]
+\mathbb{E}_k\Big[\big|\mathcal{J}^{\t}V(Y_k)\big|^2\Big] \bigg)
\bigg\}
\nn\\
\leq& C\big(1+V(Y_k)\big)^{-3}
\bigg\{ \big(1+V(Y_k)\big)^{3} \t^{3(1-\theta)}
 + \big(1+V(Y_k)\big)^3  \t^{2(1-\theta)}
 \nn\\
& \qquad \qquad\qquad\qquad +\sum_{i=1}^{3}\mathbb{E}_k\Big[\big|\mathcal{S}_{i}^{\t}V(Y_k)\big|^2\big|\mathcal{J}^{\t}V(Y_k)\big| \Big]\bigg\}.
\end{align}
On the other hand,  by \eqref{E_k}, \eqref{H_11} and \eqref{EQ*1}, one observes
\begin{align*}
&\mathbb{E}_k\Big[\big|\mathcal{S}_{1}^{\t}V(Y_k)\big|^2|\mathcal{J}^{\t}V(Y_k)|\Big]
\nn\\
\leq&C\big(1+V(Y_k)\big)\mathbb{E}_k\Big[\Big(\big(1+V(Y_{k})\big)^2\t^{1-\theta}\nn\\
  &\qquad\qquad\qquad\qquad\qquad+\mathcal{H}_{1,1}^{\t}V(Y_k)\Big)
\Big(\t^{-2\theta}|\t B_k|^{4}
 + \t^{\frac{-\theta}{2\delta_4}}|\t B_k|^{\frac{1}{\delta_4}}  \Big) \Big]\nn\\
\leq&C\big(1+V(Y_k)\big)^3\mathbb{E}_k\Big[\big(\t^{1-\theta}+ \t^{-\theta}|\t B_k|^{2}\big)
\big(\t^{-2\theta}|\t B_k|^{4}
 + \t^{\frac{-\theta}{2\delta_4}}|\t B_k|^{\frac{1}{\delta_4}}  \big) \Big]\nn\\
%=&C\big(1+V(Y_k)\big)^3\mathbb{E}_k\Big[\t^{\frac{1}{2}+\theta} \big(h^{2}(\t)|\t B_k|^{4}
% + h^{\frac{\delta_4}{2}}(\t)|\t B_k|^{\delta_4} \big)\nn\\
%&~~~~~~~~~~~~~~~~~~~~~~~~~~~~~+h^{3}(\t)|\t B_k|^{6} + h^{\frac{\delta_4}{2}+1}(\t)|\t B_k|^{\delta_4+2}  \Big]\nn\\
%=&C\big(1+V(Y_k)\big)^3 \Big[\t^{\frac{1}{2}+\theta} \big(K_4 h^{2}(\t)\t^{2}
% + K_{\delta_4}h^{\frac{\delta_4}{2}}(\t)\t^{\frac{\delta_4}{2}}  \big)\nn\\
%&~~~~~~~~~~~~~~~~~~~~~~~~~~~~~+K_6 h^{3}(\t)\t^{3} + K_{\delta_4+2}h^{\frac{\delta_4}{2}+1}(\t)\t^{\frac{\delta_4}{2}+1}  \Big]\nn\\
\leq&C\big(1+V(Y_k)\big)^3 \t^{3(1-\theta)}.
\end{align*}
 Similarly, by \eqref{E_k}, \eqref{H_22}, \eqref{H_33} and \eqref{EQ*1},   we can also prove that
\begin{align*}
 &\mathbb{E}_k\Big[\big|\mathcal{S}_{2}^{\t}V(Y_k)\big|^2 |\mathcal{J}^{\t}V(Y_k)| \Big]\nn\\
 \leq& C\big(1+V(Y_{k})\big)^3 \mathbb{E}_k\Big[\Big( \t^{-2\theta}|\t B_k|^4+ \t^{2(1-\theta)}\nn\\
  &\qquad\qquad\qquad\qquad\qquad+ |\t B_k|^2\t^{2-3\theta}\Big)\Big(  \t^{-2\theta}|\t B_k|^{4}
 + \t^{\frac{-\theta}{2\delta_4}}|\t B_k|^{\frac{1}{\delta_4}}    \Big)\Big]\nn\\
  \leq& C\big(1+V(Y_{k})\big)^3\t^{4(1-\theta)},
\end{align*}
and
\begin{align*}
 &\mathbb{E}_k\Big[\big|\mathcal{S}_{3}^{\t}V(Y_k)\big|^2 |\mathcal{J}^{\t}V(Y_k)| \Big]\nn\\
 \leq& C\big(1+V(Y_{k})\big)^3 \mathbb{E}_k\Big[\Big( |\t B_k|^2\t^{4-5\theta)}+\t^{2-4\theta} |\t B_k|^4+ \t^{4(1-\theta)} \nn\\
  &\qquad\qquad\qquad\qquad\qquad+  \t^{-3\theta}|\t B_k|^6\Big)\Big( \t^{-2\theta}|\t B_k|^{4}
 + \t^{\frac{-\theta}{2\delta_4}}|\t B_k|^{\frac{1}{\delta_4}}   \Big)\Big]\nn\\
   \leq& C\big(1+V(Y_{k})\big)^3\t^{5(1-\theta)}.
\end{align*}
Thus the above inequalities and  \eqref{xi_3}  imply
\begin{align}\la{vys4}
\mathbb{E}_k\big[\xi_k^3\big]
\leq C\t^{2(1-\theta)}
  \Big(1 +\t^{1-\theta} +  \t^{2(1-\theta)} +  \t^{3(1-\theta)}\Big)
\leq C \t^{2(1-\theta)}.
\end{align}
 Using \eqref{vys2}, \eqref{vys3} and \eqref{vys4} as well as \eqref{1.4} we now establish inequality \eqref{3.22y}. To this end,
 combining \eqref{vys1}-\eqref{vys4}, we know that for
  any integer $k\geq 0$,
\begin{align*}
 \mathbb{E}_k\Big[ \big(1+V(\tilde{Y}_{k+1})\big)^{\rho}\Big]
\leq& \big(1+V(Y_{k})\big)^{\rho}\bigg[1+C \t^{2(1-\theta)}\nn\\
&+\frac{\rho\t}{2}\frac{2 \big(1+V(Y_{k})\big)
\mathcal{L}V(Y_{k})-(1-\rho)| D^{(1)}V(Y_{k})  g(  Y_{k})|^2}{ \big(1+V(Y_{k}) \big)^{2}}\bigg]\nn\\
\leq& \big(1+V(Y_{k})\big)^{\rho}\Big(1+C \t^{2(1-\theta)}\Big) +\mathcal{L}\big(1+V(Y_{k})\big)^{\rho}.
\end{align*}
Making use of  the above inequality as well as  \eqref{1.4}  yields
 \begin{align*}
 \mathbb{E}_k\Big[\big(1+ V(\tilde{Y}_{k+1})\big)^{\rho} \Big]
\leq& \big(1+  V(Y_{k})\big)^{\rho}\big(1+ C  \t\big)
+ \lambda \big(1+V^{\rho}(Y_{k})\big) \t\nn\\
\leq&\big(1+  V(Y_{k})\big)^{\rho}\big(1+ C  \t\big)+ \lambda\t
\end{align*}
 for any integer $k\geq 0$.   Furthermore,
$$V(\pi_{\t}(x))\leq  V(x) \qquad \forall~x\in \mathbb{R}^d,$$
which implies that
\begin{align*}
\mathbb{E}_k\Big[\big(1+ V(Y_{k+1})\big)^{\rho} \Big]
\leq \mathbb{E}_k\Big[\big(1+ V(\tilde{Y}_{k+1})\big)^{\rho} \Big]
\leq \big(1+  V(Y_{k})\big)^{\rho} \big(1+ C  \t\big)+ \lambda\t .
\end{align*}
 Repeating this procedure we obtain
\begin{align*}
\mathbb{E}\big[ \big(1+V(Y_{k})\big)^{\rho}\big|\mathcal{F}_0\big]
 \leq&  (1+C\t)^{k} \big(1+V(x_0)\big)^{\rho}+ \lambda \t \sum_{i=0}^{k-1}(1+C\t)^{i}.
\end{align*}
Taking expectations on both sides yields
\begin{align*}
\mathbb{E}\big[ V^{\rho}(Y_{k})\big]
 \leq&  C(1+C\t)^{k}\leq  C\exp\left(C k \t\right)
 \leq  C\exp\left(CT\right)
\end{align*}
for any integer $k$ satisfying $0\leq k\t\leq T$. Therefore the desired result follows.
\end{proof}

\begin{remark}
 If $V\in  \mathcal{V}^{p}_{\delta_p}\cap \{D^{(p+1)}V(\cdot)\equiv0\}$ for $p=2$ or $3$ and some integer $1/\delta_p\in [p, +\infty)$, then $V\in  \mathcal{V}^{4}_{\delta_4}$ with $1/\delta_4=4\vee 1/\delta_p$.
\end{remark}

\begin{remark}
  If $\tilde{V}\in  \mathcal{V}^{4}_{\delta_4}$ in Theorme \ref{co3.1} is replaced by $\tilde{V}\in  \mathcal{V}^{3}_{\delta_3}$ for some integer  $1/\delta_3\in [3, +\infty)$,
we can then choose a strictly increasing continuous  function  $\f: \bar{\RR}_+\rightarrow \bar{\RR}_+$ such that $\f(u)\rightarrow \infty$
as $u\rightarrow \infty$ and
$$
\sup_{|x|\leq u}  \left(\frac{ |f (  x)|}{\big(1+\tilde{V}(x)\big)^{\delta_3}}\vee\frac{| g(  x)|^2 }{\big(1+\tilde{V}(x)\big)^{2 \delta_3}}\right)\leq \f(u),\qquad\forall~ u\geq 1.
$$
Then for any given $0<\theta\leq 1/3$ the corresponding  $V$-truncated EM scheme \eqref{Y_0} still has $V$-integrability \eqref{3.22y}.
  It turns out that the smoothness of $V(x)$ affects the construction of the scheme (\ref{Y_0}).
\end{remark}

\begin{remark}
 $V$-integrability of numerical schemes has already been well studied in \cite{Hutzenthaler15, Lukasz17}.   However the results are based on some priori inequality estimates (see, \cite[Proposition 2.7]{Hutzenthaler15} and \cite[Theorem 2.5]{Lukasz17}). Here we show that without additional restricted conditions except those which guarantee the  $V$-integrability of exact solutions, the $V$-truncated EM scheme \eqref{Y_0} has $V$-integrability just like the form of \eqref{3.22y}.
\end{remark}
\begin{lemma}\la{L:C_1}
Under the conditions of Theorem \ref{co3.1}, for any $\t\in(0,\t^*]$  define
\begin{align}\la{3.18}
\eta_{\t}=:\inf\big\{t\geq 0:|\tilde{Y}(t)|\geq  \varphi^{-1}\big(K\t^{-\theta}\big)\big\}.
\end{align}
Then for any   $T>0$,
\begin{align*}
 \E V^{\rho}\big(\tilde{Y}(T\wedge \eta_{\t})\big)\leq C.
\end{align*}
  \end{lemma}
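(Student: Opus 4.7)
The plan is to couple the one-step recursion developed in the proof of Theorem \ref{co3.1} with a stopping-time argument at the discrete level. Define
\[ k^{*}:=\inf\{k\geq 0:|\tilde{Y}_{k}|\geq \f^{-1}(K\t^{-\theta})\}, \]
with the convention $\tilde{Y}_{0}:=x_{0}$; this is an $\{\mathcal{F}_{t_{k}}\}$-stopping time, and because $K(\t^{*})^{-\theta}\geq\f(|x_{0}|\vee 1)$ we have $k^{*}\geq 1$. Since $\tilde{Y}(t)$ is piecewise constant on the grid, $\eta_{\t}=k^{*}\t$, and a case analysis on whether $\eta_{\t}\leq T$ or $\eta_{\t}> T$, with $N:=\lfloor T/\t\rfloor$, yields $\tilde{Y}(T\wedge\eta_{\t})=\tilde{Y}_{N\wedge k^{*}}$. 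It therefore suffices to bound $\E[(1+V(\tilde{Y}_{N\wedge k^{*}}))^{\rho}]$.

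The key observation is that on the event $\{k^{*}>k\}\in\mathcal{F}_{t_{k}}$ one has $|\tilde{Y}_{k}|<\f^{-1}(K\t^{-\theta})$, hence $Y_{k}=\pi_{\t}(\tilde{Y}_{k})=\tilde{Y}_{k}$. Thus the bounds \eqref{Y_01} that drive every estimate in the proof of Theorem \ref{co3.1} continue to hold with $Y_{k}$ replaced by $\tilde{Y}_{k}$, and an identical computation produces
\[ \E_{k}\bigl[(1+V(\tilde{Y}_{k+1}))^{\rho}\bigr]\leq (1+V(\tilde{Y}_{k}))^{\rho}(1+C\t)+\lambda\t \qquad \text{on } \{k^{*}>k\}. \]
Setting $Z_{k}:=(1+V(\tilde{Y}_{k\wedge k^{*}}))^{\rho}$ and splitting the conditional expectation along $\{k^{*}\leq k\}$, where $Z_{k+1}=Z_{k}$, and along $\{k^{*}>k\}$, where the recursion above applies, yields $\E_{k}[Z_{k+1}]\leq Z_{k}(1+C\t)+\lambda\t$ for every $k\geq 0$. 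Iterating in the discrete Gronwall fashion gives $\E[Z_{N}]\leq C\exp(CT)$, and since $V^{\rho}(\cdot)\leq (1+V(\cdot))^{\rho}$ the assertion follows.

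The only point that requires care, rather than a genuine obstacle, is the measurability bookkeeping: one must check that $\{k^{*}>k\}$ is determined by $\tilde{Y}_{0},\ldots,\tilde{Y}_{k}$ and hence lies in $\mathcal{F}_{t_{k}}$, and that on this event the one-step estimate from Theorem \ref{co3.1} carries over verbatim with the pre-truncation iterate $\tilde{Y}_{k}$ in place of the truncated iterate $Y_{k}$. Everything else is a direct transcription of the closing argument of Theorem \ref{co3.1}.
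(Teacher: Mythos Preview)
Your argument is correct and follows the same overall strategy as the paper: both reduce to the discrete stopping time $k^{*}$ (the paper calls it $\beta_{\t}$), observe that $\tilde{Y}(T\wedge\eta_{\t})=\tilde{Y}_{\lfloor T/\t\rfloor\wedge k^{*}}$, and feed the one-step bound of Theorem~\ref{co3.1} into a Gronwall iteration along the stopped sequence.

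The one genuine difference in execution is worth recording. The paper re-expands $V(\tilde{Y}_{(k+1)\wedge\beta})$ by Taylor's formula, writes the increment with the indicator $I_{[[0,\beta]]}(k+1)$, and invokes Doob's optional stopping theorem to compute $\E_{k\wedge\beta}[(\t B_{k\wedge\beta}^{(l)})^{r}I_{[[0,\beta]]}(k+1)]$; the recursion is then stated for the filtration $\{\mathcal{F}_{t_{k\wedge\beta}}\}$. You bypass all of this by conditioning on the fixed filtration $\mathcal{F}_{t_{k}}$: since $\{k^{*}>k\}\in\mathcal{F}_{t_{k}}$ and on that event $Y_{k}=\tilde{Y}_{k}$, the \emph{already-proved} inequality $\E_{k}[(1+V(\tilde{Y}_{k+1}))^{\rho}]\leq (1+V(Y_{k}))^{\rho}(1+C\t)+\lambda\t$ from Theorem~\ref{co3.1} can be multiplied by $I_{\{k^{*}>k\}}$ and added to the trivial identity on $\{k^{*}\leq k\}$. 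This is cleaner---no appeal to Doob, no stopped Brownian increments, no repetition of the Taylor-expansion bookkeeping---and it makes transparent that the lemma is a corollary of the one-step estimate rather than a parallel computation. The paper's route, on the other hand, is self-contained and does not rely on locating and quoting an intermediate inequality buried inside another proof.
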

\begin{proof}
 Define~$\beta_{\t}=:\inf\left\{k\geq 0:|\tilde{Y}_k|\geq \varphi^{-1}\big(K\t^{-\theta}\big)\right\},$ we have
$
\eta_{\t}=\t \beta_{\t}.
$
  We write $\beta=\beta_{\t}$ shortly. Obviously, both  $\eta_{\t}$ and $\beta$ are   $\F_{t_k}$ stopping times.
For $\omega\in \{\beta\geq k+1\}$, we have  $Y_k=\tilde{Y}_{k\wedge \beta}$ and
$$
\tilde{Y}_{(k+1)\wedge \beta}=\tilde{Y}_{k+1}.
$$
On the other hand, for $\omega\in \{\beta< k+1\}$, we have $\beta\leq k$ and hence
$$
\tilde{Y}_{(k+1)\wedge \beta}=\tilde{Y}_{\beta}=\tilde{Y}_{k\wedge\beta}.
$$
Therefore, we derive from (\ref{Y_0}) that for any integer $k\geq 0$,
\begin{align*}
 \tilde{Y}_{(k+1)\wedge \beta}
  =\tilde{Y}_{k\wedge \beta}+\left[f(\tilde{Y}_{k\wedge \beta})\t+g(\tilde{Y}_{k\wedge \beta})\t B_{k\wedge \beta}\right]I_{[[0, \beta]]}(k+1).
\end{align*}
Since $V(x)\in \mathcal{V}^{4}_{\delta_4}$, using the Taylor formula with integral remainder term we get
\begin{align}\la{Leq1}
V(\tilde{Y}_{(k+1)\wedge \beta})&
=\! V(\tilde{Y}_{k\wedge \beta})\!+\!\!\sum_{|\alpha|=1}^{3}\frac{ D^{\alpha} V (\tilde{Y}_{k\wedge \beta}) }{\alpha!} \big(\tilde{Y}_{(k+1)\wedge \beta}\!-\tilde{Y}_{k\wedge \beta}\big)^{\alpha}+J(\tilde{Y}_{(k+1)\wedge \beta}, \tilde{Y}_{k\wedge \beta}).
\end{align}
Note that
$$
\t B_{k\wedge \beta}I_{[[0, \beta]]}(k+1)=\big(B(t_{(k+1)\wedge \beta})-B(t_{k\wedge \beta})\big)I_{[[0, \beta]]}(k+1).
$$
Since $B(t)$ is a continuous local martingale, by the virtue of the Doob martingale stopping time theorem (see e.g., \cite[pp. 11-12]{Mao08}),
  we know that
\begin{align*}
\E_{k\wedge \beta}\Big[\big(\t B_{k\wedge \beta}^{(l)}\big)^{2r+1} I_{[[0, \beta]]}(k+1)\Big]=\E\Big[\big(\t B_{k\wedge \beta}^{(l)}\big)^{2r+1} I_{[[0, \beta]]}(k+1)\big|\mathcal{F}_{t_{k\wedge \beta}}\Big]=0,
\end{align*}
and
$$
\E_{k\wedge \beta}\Big[\big(\t B_{k\wedge \beta}^{(l)}\big)^{2r}  I_{[[0, \beta]]}(k+1)\Big]=(2r-1)!!\t^r  \E_{k\wedge \beta}\big[I_{[[0, \beta]]}(k+1)\big],
$$
where $\mathbb{E}_{k\wedge \beta}[\cdot]:=\mathbb{E}\big[\cdot|\mathcal{F}_{t_{k\wedge \beta}}\big]$ and $\t B_{k\wedge \beta}=\big(\t B_{k\wedge \beta}^{(1)}, \ldots, \t B_{k\wedge \beta}^{(m)}\big)^T$.
Using the techniques in the proof of Lemma \ref{le1.3},  we deduce that
 \begin{align*}
&\sum_{|\alpha|=1}^{3}\!\!\frac{D^{\alpha} V (\tilde{Y}_{k\wedge \beta})}{\alpha!}\big(\tilde{Y}_{(k+1)\wedge \beta}\!-\tilde{Y}_{k\wedge \beta}\big)^{\alpha}\nn\\
\leq &\Big[\mathcal{L}V(\tilde{Y}_{k\wedge \beta})\t\!+ \mathcal{R}^{\t}V(\tilde{Y}_{k\wedge \beta})+\!\sum_{i=1}^{3}\mathcal{S}_{i}^{\t}V(\tilde{Y}_{k\wedge \beta})\bigg] I_{[[0, \beta]]}(k\!+1).
\end{align*}
This together with \eqref{EQ1} and \eqref{Leq1} implies
\begin{align*}
 \big(1+V(\tilde{Y}_{(k+1)\wedge \beta})\big)^{\rho}
\leq&\Big[1+V(\tilde{Y}_{k\wedge \beta})+\Big(\mathcal{L}V(\tilde{Y}_{k\wedge \beta})\t+ \mathcal{R}^{\t}V(\tilde{Y}_{k\wedge \beta})\nn\\
&\qquad + \sum_{i=1}^{3}\mathcal{S}_{i}^{\t}V(\tilde{Y}_{k\wedge \beta})+J(\tilde{Y}_{(k+1)\wedge \beta}, \tilde{Y}_{k\wedge \beta})\Big)I_{[[0, \beta]]}(k+1) \Big]^{\rho}\nn\\
\leq&\big(1+V(\tilde{Y}_{k\wedge \beta})\big)^{\rho}\big(1+  \bar{\xi}_{k\wedge \beta}I_{[[0, \beta]]}(k+1)\big)^{\rho},
\end{align*}
where
\begin{align*}
\bar{\xi}_{k\wedge \beta}=\frac{ \mathcal{L}V(\tilde{Y}_{k\wedge \beta})\t\! + C\big(1+V(\tilde{Y}_{k\wedge \beta})\big)\t^{2(1-\theta)} \!+\sum_{i=1}^{3}\mathcal{S}_{i}^{\t}V(\tilde{Y}_{k\wedge \beta})\!+\mathcal{J}^{\t}V(\tilde{Y}_{k\wedge \beta}) }{1+V(\tilde{Y}_{k\wedge \beta})}.
\end{align*}
Using the techniques in the proof of Theorem \ref{co3.1}, we show that
\begin{align*}
&\mathbb{E}_{k\wedge \beta}\Big[ \big(1+V(\tilde{Y}_{(k+1)\wedge \beta})\big)^{\rho}\Big]\nn\\
\leq& \big(1+V(\tilde{Y}_{k\wedge \beta})\big)^{\rho}\Bigg\{1 +\mathbb{E}_{k\wedge \beta}\bigg[\frac{\rho\t}{2}\big(1+V(\tilde{Y}_{k\wedge \beta})\big)^{-2}\Big(2 \big(1+V(\tilde{Y}_{k\wedge \beta})\big)
\mathcal{L}V(\tilde{Y}_{k\wedge \beta})\nn\\
&\qquad\qquad\qquad\qquad-(1-\rho)| D^{(1)}V(\tilde{Y}_{k\wedge \beta}) g(\tilde{Y}_{k\wedge \beta})|^2\Big) I_{[[0, \beta]]}(k+1)\bigg]+C  \t^{2(1-\theta)}\Bigg\}\nn\\
\leq& \big(1+V(\tilde{Y}_{k\wedge \beta})\big)^{\rho} \big(1+ C  \t\big)+ \lambda\t.
\end{align*}
Repeating this procedure we obtain
\begin{align*}
\mathbb{E}\big[ \big(1+V(\tilde{Y}_{k\wedge \beta})\big)^{\rho}|\mathcal{F}_{0}\big]
 \leq \big(1+ V(x_0)\big)^{\rho}(1+C\t)^k+ \lambda\t\sum^{k-1}_{i=0}(1+C\t)^i
\end{align*}
for any integer $k$ satisfying $0\leq k\t\leq T$. Taking expectations on both sides yields
\begin{align*}
\mathbb{E}\big[ V^{\rho}(\tilde{Y}_{k\wedge \beta})\big]
 \leq& C (1+C\t)^{k} \leq   C\exp\left(C  k \t\right)
 \leq C\exp\left(CT\right).
\end{align*}
  Therefore, the desired assertion follows from
\begin{align*}
 \E V^{\rho}\big(\tilde{Y}(T\wedge \eta_{\t})\big)= \E V^{\rho}\big(\tilde{Y}_{\lfloor\frac{T}{\t}\rfloor\wedge \beta}\big)\leq C,
\end{align*}
where $\lfloor\frac{T}{\t}\rfloor$ represents the integer part of $T/\t$.  The proof is complete.
\end{proof}

Let
  $\mathcal{K}$ denote the family of all continuous increasing functions
 $\kappa: \bar{\mathbb{R}}_+\rightarrow\bar{\mathbb{R}}_+$ such that $\kappa(0)=0$ while $\kappa(u)>0$ for $u>0$. Denote by $\kappa^{-1}$ the inverse function of $\kappa\in \mathcal{K}$. Let  $\mathcal{K}_{\vee}$ denote the family of all convex functions $\kappa\in \mathcal{K}$, and $\mathcal{K}_{\wedge}$ denote the family of all concave functions $\kappa\in \mathcal{K}$.

\begin{theorem}\la{T:C_2}
Let the conditions of  Theorem \ref{co3.1} hold.
If there moreover exist a function $\kappa\in \mathcal{K}_{\vee}$ and a constant $\bar{p}>0$  such that
$$
\kappa\big(|x|^{\bar{p}}\big)\leq V^{\rho}(x)\quad \forall x\in \mathbb{R}^d,
$$
then for any $q\in (0, \bar{p})$,
\be \la{F_0}
\lim_{\t\rightarrow 0} \E |X(T)-Y(T)|^q=0,\quad \forall ~T\geq 0.
\ee
\end{theorem}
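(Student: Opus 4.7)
The plan is to combine a two-sided stopping-time localisation with the moment bounds already furnished by Theorem~\ref{th1}, Theorem~\ref{co3.1}, and Lemma~\ref{L:C_1}. Fix $T>0$, $q\in(0,\bar p)$, and an integer $N\ge 1$. I introduce the stopping times $\tau_N:=\inf\{t\ge 0:|X(t)|\ge N\}$ and $\sigma_N:=\inf\{t_k\ge 0:|\tilde Y_k|\ge N\}$, and set $\theta_{N,\t}:=\tau_N\wedge\sigma_N$. For every $\t$ small enough that $\varphi^{-1}(K\t^{-\theta})\ge N$ (possible because $\varphi^{-1}(u)\to\infty$), the truncation $\pi_\t$ is inactive whenever $|\tilde Y_k|<N$; consequently, on $\{\sigma_N>T\}$ one has $Y_k=\tilde Y_k$ for every $t_k\le T$, and the scheme reduces to classical Euler--Maruyama inside the ball of radius~$N$.

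The next step is to split $\E|X(T)-Y(T)|^q=I_1+I_2$, where $I_1:=\E[|X(T)-Y(T)|^q I_{\{\theta_{N,\t}>T\}}]$ and $I_2:=\E[|X(T)-Y(T)|^q I_{\{\theta_{N,\t}\le T\}}]$, and to estimate each piece separately. For $I_2$, H\"older's inequality with exponent $\bar p/q>1$ gives
\begin{equation*}
I_2\le C\bigl(\E|X(T)|^{\bar p}+\E|Y(T)|^{\bar p}\bigr)^{q/\bar p}\PP\bigl(\theta_{N,\t}\le T\bigr)^{1-q/\bar p}.
\end{equation*}
Since $\kappa$ is convex and strictly increasing, $\kappa^{-1}$ is concave and increasing; from $|x|^{\bar p}\le\kappa^{-1}(V^\rho(x))$ and Jensen's inequality, Theorem~\ref{th1} and Theorem~\ref{co3.1} together yield $\E|X(T)|^{\bar p}\vee\E|Y(T)|^{\bar p}\le\kappa^{-1}(C)$, a bound uniform in~$\t$. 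For the probability, Markov's inequality combined with the uniform estimates $\E V^\rho(X(T\wedge\tau_N))\le C$ (deduced from the proof of Theorem~\ref{th1} applied to the stopped process) and $\E V^\rho(\tilde Y(T\wedge\sigma_N))\le C$ (a direct modification of Lemma~\ref{L:C_1} with $\beta_N:=\inf\{k:|\tilde Y_k|\ge N\}$ in place of $\beta$) yields $\PP(\theta_{N,\t}\le T)\le 2C/\kappa(N^{\bar p})$. Hence $I_2\le C(\kappa(N^{\bar p}))^{-(1-q/\bar p)}$, independently of~$\t$.

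For $I_1$, I exploit that on $\{\theta_{N,\t}>T\}$ both $|X(t)|<N$ on $[0,T]$ and $|Y_k|<N$ for $t_k\le T$, and the scheme is precisely explicit Euler--Maruyama (the truncation is inactive). The local Lipschitz property of $f,g$ therefore supplies an effective Lipschitz constant~$C_N$ on the ball of radius~$N$, and a standard It\^o--Gronwall estimate on the stopped error gives $I_1\le \E|X(T\wedge\theta_{N,\t})-Y(T\wedge\theta_{N,\t})|^q\le C_N\t^{q/2}$, which tends to zero with~$\t$ for each fixed~$N$. The conclusion then follows by the usual $\varepsilon/2$ argument: given $\varepsilon>0$, first choose~$N$ so large that $I_2<\varepsilon/2$ (which is possible since $V(x)\to\infty$ as $|x|\to\infty$ forces $\kappa(N^{\bar p})\to\infty$), then choose~$\t$ small enough that $I_1<\varepsilon/2$.

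The main obstacle I anticipate is verifying the uniform-in-$(N,\t)$ estimate $\E V^\rho(\tilde Y(T\wedge\sigma_N))\le C$, whose constant must depend on neither the stopping level~$N$ nor the step~$\t$. Fortunately, the argument in the proof of Lemma~\ref{L:C_1} never exploits the precise threshold $\varphi^{-1}(K\t^{-\theta})$ beyond the bounds on $|f|,|g|$ supplied by \eqref{Y_01}, and those bounds continue to hold at any intermediate level $N\le\varphi^{-1}(K\t^{-\theta})$. The same Taylor-expansion and conditional-expectation machinery therefore transfers verbatim once $\beta_N$ is substituted for $\beta$, which completes the overall scheme of the proof.
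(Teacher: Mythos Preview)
Your proposal is correct and follows essentially the same localisation strategy as the paper: split the error via a stopping time that keeps both processes in a ball of radius $N$, control the tail contribution using the $V^\rho$-moment bounds (Theorems~\ref{th1}, \ref{co3.1}, and a stopping-time variant of Lemma~\ref{L:C_1}) together with $\kappa(|x|^{\bar p})\le V^\rho(x)$, and handle the localised part by reducing to the classical Euler--Maruyama convergence for globally Lipschitz coefficients. The only cosmetic differences are that the paper uses Young's inequality (with an auxiliary parameter $l$) rather than H\"older, employs the stopping time $\eta_\t$ at level $\varphi^{-1}(K\t^{-\theta})$ rather than your $\sigma_N$ at level $N$, and makes the localised convergence explicit by introducing the truncated SDE $dz=f_N(z)\,dt+g_N(z)\,dB$ and its EM approximation $u(t)$---exactly the ``standard It\^o--Gronwall estimate'' you invoke.
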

\begin{proof}
Let  $\eta_{\t}$ be defined as before.
Define
\begin{align}\la{tau_N}
\tau_{N}=:\inf\{t\geq 0:|X(t)|\geq N\},\
\theta_{N, \t}:=\tau_N \wedge \eta_{\t},\ e_{\t }(T):=X(T)- {Y}(T).
\end{align}
For any $l>0$, using the Young inequality  we obtain that
\begin{align}\la{F_1}
\!\!\!\!\!   \E|e_{\t }(T)|^q
 \!\!=& \E\lf(|e_{\t }(T)|^q I_{\{\theta_{N, \t} \geq T\}}\rt)
+ \E\lf(|e_{\t }(T)|^q I_{\{\theta_{N, \t} \leq T\}}\rt) \nonumber \\
 \!\!\leq &  \E\lf(|e_{\t }(T)|^q I_{\{\theta_{N, \t} \geq T\}}\rt)
\!+  \frac{q l}{\bar{p}}\E\lf(|e_{\t }(T)|^{\bar{p}} \rt)\!+\! \frac{\bar{p}-q}{\bar{p} l^{q/(\bar{p}-q)}} \PP {\{\theta_{N, \t} \leq T\}}.\!\!\!
\end{align}
 It follows from the virtues of   Theorem  \ref{th1} and  Theorem \ref{co3.1} that
\begin{align*}
 \E |e_{\t }(T)|^{\bar{p}}  \leq&  2^{{\bar{p}}}\big( \E |X(T)|^{\bar{p}}
+  \E |Y(T)|^{\bar{p}}\big)\nn\\
 \leq& 2^{{\bar{p}}}\Big(\kappa^{-1}\big(\E V^{\rho}(X(T))\big) +\kappa^{-1}\big(\E V^{\rho}(Y(T))\big)\Big)\leq    C,
\end{align*}
where $\kappa^{-1} $ is the inverse function of $\kappa$. Now let $\e>0$ be arbitrary.  Choose $l>0 $ small sufficiently such that $ {Cq l}/{\bar{p}} \leq \e/3$, then we have
\be\la{F_2}
\frac{q l}{\bar{p}}\E\lf(|e_{\t }(T)|^{\bar{p}}  \rt)\leq \frac{\e}{3}.
\ee
Then choose $N>1$ large sufficiently such that
$ \frac{C({\bar{p}}-q)}{\kappa(N^{\bar{p}}){\bar{p}} l^{q/({\bar{p}}-q)}}\leq   \frac{\e}{6}.$
Choose $\bar{\t}^{*}\in (0, \t^{*}]$ small sufficiently such that
$
 \f^{-1}\big(K(\t^*)^{-\theta}\big)\geq N.
$
Then for any $\t\in(0, \bar{\t}^{*}]$,
by the virtue of Theorem \ref{th1} and Lemma \ref{L:C_1}, we have
$$
 \PP {\{\tau_{N } \leq T\}} \leq  \frac{C}{\kappa\big(N^{\bar{p}}\big)},\qquad\PP {\{\eta_{\t} \leq T\}}\leq \frac{C }{\kappa\big( |\f^{-1}\big(K\t^{-\theta}\big)|^{\bar{p}}\big) }\leq   \frac{C }{\kappa\big(N^{\bar{p}}\big) }.
$$
This implies that
\be\la{F_3}
\frac{ {\bar{p}}-q }{   {\bar{p}} l^{q/({\bar{p}}-q)}} \PP {\{\theta_{N, \t} \leq T\}}\leq \frac{ {\bar{p}}-q }{   {\bar{p}} l^{q/({\bar{p}}-q)}} \Big(\PP {\{\tau_{N } \leq T\}}+\PP {\{\eta_{\t} \leq T\}}\Big)%\leq \frac{2 C({\bar{p}}-q)}{\kappa(N^{\bar{p}}) {\bar{p}} l^{q/({\bar{p}}-q)}}
\leq \frac{\e}{3}  .
\ee
Combining (\ref{F_1}), (\ref{F_2}) and (\ref{F_3}), we know that for the chosen $N$ and all $\t\in (0, \bar{\t}^{*}]$,
$$\E|e_{\t }(T)|^q \leq \E\lf(| {e}_{\t }(T)|^q I_{\{\theta_{N, \t} \geq T\}}\rt)+\frac{2\e}{3}. $$
If we can show that
\be\la{F_4}\lim_{\t\rightarrow 0}\E\lf(| {e}_{\t }(T)|^q I_{\{\theta_{N, \t} \geq T\}}\rt)=0,\ee
the required assertion  follows.
For this purpose  we define the truncated functions
$$
f_N(x)=f\Big(\big(|x|\wedge N \big) \frac{x}{|x|}\Big),~~\hbox{and}~~g_N( x)=g\Big(\big(|x|\wedge N \big) \frac{x}{|x|}\Big)
 $$
for any $x\in \RR^d$. Consider the truncated SDE
\be\la{F_5}
dz(t) =f_N(z(t))dt +g_N(z(t))dB(t)
\ee
with  the initial value $z(0)=x_0$. For the chosen $N$,  we know that $f_N(\cdot)$ and $g_N(\cdot)$ are globally Lipschitz continuous with the Lipschitz constant $C_N$. Therefore, SDE (\ref{F_5}) has a unique regular solution $z(t)$ satisfying
\be\la{F_6}
X(t\wedge \tau_N)=z(t\wedge \tau_N)\ \ \hbox{a.s.}\qquad\forall~ t\geq 0.
\ee
On the other hand, for each $\t\in (0, \bar{\t}^{*}]$, we apply the EM method to  (\ref{F_5}) and  denote by $u(t)$ the piecewise constant EM solution  (see \cite{Hi02,Kloeden}) which has the property
\be\la{F_7}
\E\Big(\sup_{0\leq t\leq T}|z(t)-u(t)|^q \Big)\leq C_N\t^{q/2}, \qquad\forall ~T\geq 0.
\ee
%It follows from (\ref{FF_0}) that
Due to $ \f^{-1}\big(K\t^{-\theta}\big)\geq N$ for any $\t\in (0, \bar{\t}^{*}]$,  we have
$
Y(t\wedge \theta_{N, \t})=u(t\wedge \theta_{N, \t})~~\hbox{a.s.}
 $  This together with  \eqref{F_6}  implies
\begin{align*}
\E\lf(| {e}_{\t }(T)|^q I_{\{\theta_{N, \t} \geq T\}}\rt)
 =&\E\lf(| {e}_{\t }(T\wedge \theta_{N, \t})|^q I_{\{\theta_{N, \t} \geq T\}}\rt)
 \\
% \leq   \E\lf(| z(T\wedge \theta_{N, \t})- u(T\wedge \theta_{N, \t})|^q   \rt)\\
 \leq &  \E\Big(\sup_{0\leq t\leq T\wedge \theta_{N, \t}}| z(t)- u(t )|^q   \Big).
\end{align*}
This together with \eqref{F_7} implies (\ref{F_4}) as desired. The proof is  complete.
\end{proof}

 \section{Convergence rate}\label{c-r}
In this section, our aim is to establish a rate of convergence result under the conditions of Theorem \ref{T:C_2} and additional
conditions on $f$ and $g$.   The convergence  rate to be established is optimal as it is similar to the standard results of the  explicit EM scheme for SDEs with globally Lipschitz  coefficients.
The work of Higham et al. \cite{Hi02} gave the optimal rate in $p$th moment
for the implicit EM scheme for $p\geq 2$ with the global Lipschitz $g$ and a one-sided Lipschitz $f$ together with the
polynomial growth conditoin. Using the similar conditions to those in \cite{Hi02}, the rate for the
tamed Euler was obtained by Hutzenthaler et al. \cite{Hutzenthaler12}. Using the special Lyapunov function $|x|^{p}$ with $p\geq 2$,  Sabanis \cite{Sabanis16} developed the tamed
EM scheme, and obtained the optimal convergence rate.% under conditions similar to ours.

 For convenience, for a pair of integers  $p\in [2, +\infty)$ and $1/\delta_{p}\in [4, +\infty)$,
 define
\begin{align}\la{D**}
 \bar{\mathcal{V}}^{p}_{\delta_{p}}:= \Big\{V(\cdot) \in \mathcal{C}^{p}_\infty(\mathbb{R}^d;\bar{\mathbb{R}}_{+})\Big|& \mathrm{Ker}(V)=\{\mathbf{0}\}, \exists ~c>0~ ~\mathrm{s.t.}~ \nn\\
 &\quad  |D^{(n)}  V(\cdot)|\leq c V^{1-  n \delta_{p} }(\cdot) ,~ n=1, 2, \ldots, p\Big\}.
\end{align}
 To estimate the  (strong)  convergence rate, we need somewhat stronger conditions compared with the convergence alone, which are stated as follows.

\begin{assumption}  \label{a6}
There exist functions $\kappa^2\in \mathcal{K}_{\wedge}$, $V\in  \mathcal{V}^{4}_{\delta_4}$ and
  $U\in \bar{\mathcal{V}}^{2}_{\delta_2}$ for a pair of  integers $1/\delta_4\in [4, +\infty)$ and $1/\delta_2\in [2, +\infty)$, as well as   positive constants  $a$, $q$, $\tau$  and $c_1$ such that $\Delta^{\tau}\leq \kappa (\Delta^{\frac{q}{2}})$ for any $\Delta\in (0, \Delta^*],$
%   \begin{align}\label{xeq0}
% \kappa(|x|^q)\leq V^{a}(x)~~ \forall~x \in \mathbb{K},~~\mathrm{and}~~   \Delta^{\tau}\leq \kappa (\Delta^{\frac{q}{2}}) ~~~  \forall \Delta\in (0, \Delta^*],
%   \end{align}
%   and
\begin{align}\label{xeq1}
U(x)\leq \kappa(|x|^q)\leq V^{a}(x),\qquad U(x+y)\leq c_1(U(x)+U(y)) \qquad\forall~x,y\in \mathbb{K},
   \end{align}
for any compact subset  $\mathbb{K}\subset\mathbb{R}^d$. Moreover,  for some  $\iota>0 $, there exist  positive constants $\bar{K}$ and $r$ such that $r> 2(\delta_4/a- \delta_2),$
   \begin{align*}
 2U_x(x-y)\big(f(x)-f(y)\big)+(1+\iota)|U_{xx}(x-y)||g(x)-g(y)|^2
\leq \bar{K} U(x-y),
   \end{align*}
   and
   \begin{align*}
           & | f(x)-f(y)|\leq \bar{K}\big(1+U^r(x)+U^r(y)\big)U^{\delta_2}(x-y),\nn\\
&|g(x)-g(y)|^2 \leq \bar{K}\big(1+U^r(x)+U^r(y)\big)U^{ 2 \delta_2} (x-y),
   \end{align*}
   for any $x, y \in \RR^d$.
   \end{assumption}

\begin{remark}
One observes that if Assumption \ref{a6} holds, by Young's inequality
\begin{align}\label{cond-4}
|g(x)| \leq&  |g(x)-g(\mathbf{0})|+|g(\mathbf{0})|\nn\\
\leq& \sqrt{\bar{K}}
\big(U^{ 2 \delta_2}(x)+U^{r+ 2 \delta_2}(x) \big)^{\frac{1}{2}} +|g(\mathbf{0})|
\leq C\big(1+U^{\frac{r}{2}+ \delta_2} (x)\big),
\end{align}
\end{remark}
\begin{remark}
Let $\ell:=r+2\delta_2-2\delta_4/a$.
Under Assumption \ref{a6}, $\ell>0$ and we may define $\varphi(u)=C\big(1+\kappa^{\ell}(u^{q})\big)$  for any $u>0$
 in \eqref{e21}, then $\varphi^{-1}(u)=\big[\kappa^{-1}\big((u/C-1)^{\frac{1}{\ell}}\big)\big]^{\frac{1}{q}}$ for all $u> C$.
%
% In order to obtain the rate, we specify $h(\t)=K\big[\kappa\big(\t^{\frac{q}{2}}\big)\big]^{-\frac{1}{2\tau}}$ for any $\t\in (0, \t^{*}]$, where $\tau\in (0, (\rho-a)/(2a\ell)]$   will be specified in Theorem \ref{lemma+4}. Thus, $$\pi_{\t} (x)=\Big[|x|\wedge \varphi^{-1}\Big(K\big[\kappa\big(\t^{\frac{q}{2}}\big)\big]^{-\frac{1}{2\tau}}\Big)  \Big]{x}/{|x|}$$ for any $x\in \mathbb{R}^n$.
\end{remark}
Making use of scheme \eqref{Y_0}, we define an auxiliary approximation process by
\begin{align}\label{cond-7}
\bar{Y}(t)
&=Y_k+ f(Y_k)( t-t_k )+   g(Y_k)( B(t)-B(t_k) ), \qquad\forall t\in [t_k,  t_{k+1}).
\end{align}
Note that
$  \bar{Y}(t_k) =Y(t_k)=Y_k$, that is $\bar{Y}(t)$
and $Y(t)$ coincide with the discrete solution at the grid points.

\begin{lemma}\la{lemma+1}
  Let Assumption   \ref{a6} and the conditions of Theorem \ref{co3.1} hold. Then
   for any $T>0$ and $q_0\in (0,   2 \rho/a(r+2\delta_2)] $,  the process defined by (\ref{cond-7}) satisfies
  \be\la{cond-8}
  \E  |\bar{Y}(t)-Y(t)|^{q_0}
   \leq  C \t^{\frac{q_0}{2}},\qquad  \forall t\in [0, T],
  \ee  where $C$ is a positive constant independent of $\t$.
  \end{lemma}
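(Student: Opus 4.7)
The plan is to exploit the explicit formula
\[
\bar{Y}(t)-Y(t)=f(Y_{k})(t-t_{k})+g(Y_{k})\bigl(B(t)-B(t_{k})\bigr), \qquad t\in[t_{k},t_{k+1}),
\]
and estimate the two terms separately. First I would apply the elementary inequality $|a+b|^{q_{0}}\leq 2^{q_{0}}(|a|^{q_{0}}+|b|^{q_{0}})$ to reduce to bounding $\E|f(Y_{k})|^{q_{0}}(t-t_{k})^{q_{0}}$ and $\E|g(Y_{k})(B(t)-B(t_{k}))|^{q_{0}}$. The key point is that the two pieces must be estimated using \emph{different} moment bounds: for the drift the cheap truncation bound \eqref{Y_01} already suffices, while for the diffusion I need the sharper growth bound that comes from the Lipschitz-type hypothesis on $g$, since otherwise the $\t^{-\theta}$ factor spoils the order.

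For the drift contribution I would use \eqref{Y_01} to write
\[
|f(Y_{k})(t-t_{k})|^{q_{0}}\leq K^{q_{0}}\,\t^{q_{0}(1-\theta)}\bigl(1+V(Y_{k})\bigr)^{q_{0}\delta_{4}}.
\]
Since $\theta\leq 1/2$, we have $\t^{q_{0}(1-\theta)}\leq \t^{q_{0}/2}$ for $\t\in(0,1]$, so the rate is right. The hypothesis $r>2(\delta_{4}/a-\delta_{2})$ combined with $q_{0}\leq 2\rho/(a(r+2\delta_{2}))$ forces $q_{0}\delta_{4}<\rho$, so Theorem~\ref{co3.1} supplies $\E(1+V(Y_{k}))^{q_{0}\delta_{4}}\leq C$.

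For the diffusion contribution, conditioning on $\mathcal{F}_{t_{k}}$ and using the Gaussian moment estimate $\E[|B(t)-B(t_{k})|^{q_{0}}\mid\mathcal{F}_{t_{k}}]\leq C(t-t_{k})^{q_{0}/2}\leq C\t^{q_{0}/2}$ gives
\[
\E|g(Y_{k})(B(t)-B(t_{k}))|^{q_{0}}\leq C\t^{q_{0}/2}\,\E|g(Y_{k})|^{q_{0}}.
\]
Here I would discard the truncation bound (which only yields $\t^{-q_{0}\theta/2}$) and instead use the Lipschitz-derived growth estimate \eqref{cond-4} together with $U(x)\leq V^{a}(x)$ from \eqref{xeq1} to obtain
\[
|g(Y_{k})|^{q_{0}}\leq C\bigl(1+V^{q_{0}a(r/2+\delta_{2})}(Y_{k})\bigr).
\]
The constraint $q_{0}\leq 2\rho/(a(r+2\delta_{2}))$ is precisely what makes $q_{0}a(r/2+\delta_{2})\leq\rho$, so Theorem~\ref{co3.1} again bounds the expectation by a constant independent of $\t$ and $k$.

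Combining the two contributions yields $\E|\bar{Y}(t)-Y(t)|^{q_{0}}\leq C\t^{q_{0}/2}$ uniformly in $t\in[0,T]$. The main obstacle is the tension between using the truncation bound (which involves a dangerous $\t^{-\theta}$ factor) versus the Lipschitz-derived growth bound (which requires controlling a potentially large polynomial power of $V(Y_{k})$); reconciling them via the exact range of $q_{0}$ permitted in the hypothesis is what makes the stated exponent $2\rho/(a(r+2\delta_{2}))$ sharp for this argument. A small technical point to handle carefully is that $\bar{Y}(t)$ involves $g(Y_{k})$ with $Y_{k}$ being the post-truncation value, so the Lipschitz growth estimate applies directly without any further modification.
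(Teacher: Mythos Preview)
Your proposal is correct and follows essentially the same route as the paper: split into drift and diffusion pieces, use the truncation bound \eqref{Y_01} for $f(Y_k)$ and the Lipschitz-derived growth bound \eqref{cond-4} together with $U\leq V^{a}$ for $g(Y_k)$, then invoke Theorem~\ref{co3.1} with exponent at most $\rho$. The only cosmetic difference is that the paper majorises the drift exponent $q_{0}\delta_{4}$ by the larger $q_{0}a(r/2+\delta_{2})$ so as to unify both terms under a single application of the moment bound, whereas you keep the smaller exponent and bound the drift moment directly; both are valid since $r>2(\delta_{4}/a-\delta_{2})$ guarantees $q_{0}\delta_{4}<q_{0}a(r/2+\delta_{2})\leq\rho$.
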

\begin{proof}
For any $t\in [0, T]$, there is  an integer $k\geq 0$ such that $t\in [t_k, t_{k+1})$. Then,
 \begin{align*}
 \E\big(|\bar{Y}(t)-Y(t)|^{q_0} \big)
= &\E\big(|\bar{Y}(t)-Y(t_k)|^{q_0} \big)\nn\\
 \leq&  2^{q_0} \E\big( |  f(Y_k) |^{q_0} \big) \t^{q_0} + 2^{q_0}\E\big( |  g(Y_k)|^{q_0}  |B(t)-B(t_k)|^{q_0}\big) \\
\leq & C\lf( \E |  f(Y_k) |^{q_0}\t^{q_0} +  \E |  g(Y_k)|^{q_0} \t^{\frac{q_0}{2}} \rt).
  \end{align*}
 Due to   \eqref{Y_01}, \eqref{xeq1} and \eqref{cond-4} as well as  Theorem  \ref{co3.1},
  \begin{align*}
 \E\big(|\bar{Y}(t)-Y(t)|^{q_0} \big)
 &\leq C \E \big( 1+ V(Y_k)\big)^{q_0 \delta_4 }  \t^{q_0(1-\theta)} + C \E \big( 1+U^{\frac{r}{2}+ \delta_2} (Y_k)\big)^{q_0} \t^{\frac{q_0}{2}}\nonumber\\
  &\leq C\t^{\frac{q_0}{2}}\bigg[\E \big( 1+ V(Y_k)\big)^{a(\frac{r}{2}+ \delta_2 )q_0} +  \E \big( 1+U^{\frac{r}{2}+ \delta_2} (Y_k)\big)^{q_0}  \bigg]\nonumber\\
%  &\leq C\t^{\frac{q_0}{2}}\bigg[1+  \E\Big( V^{a q_0(\frac{r}{2}+ \delta_2 )}(Y_k) \Big) +  \E \Big( U^{ q_0(\frac{r}{2}+\delta_2)}(Y_k)\Big)  \bigg]\nonumber\\
 &\leq C\t^{\frac{q_0}{2}} \Big[1+  \big(\E    V^{\rho}(Y_k)\big)^{\frac{a q_0(r+2\delta_2)}{ 2\rho}} + \big(\E    V^{\rho}(Y_k)\big)^{\frac{a q_0(r+2\delta_2)}{2\rho}} \Big]\nn\\
&\leq  C   \t^{\frac{q_0}{2} }.
 \end{align*}
 The required assertion follows.
\end{proof}

Using techniques in the proofs  of Theorem \ref{co3.1} and Lemma \ref{L:C_1}, we obtain   the following lemmas.
\begin{lemma}\la{lemma+2}
Under the conditions of Theorem \ref{co3.1},  the    process defined by (\ref{cond-7}) has the property that
\begin{align}\la{cond-9}
\sup_{\t\in(0,\t^*]}\sup_{0\leq t\leq T} \E V^{\rho}(\bar{Y}(t))
 \leq  C,\qquad\forall~T>0,
\end{align}
 where $C$ is  a positive constant independent of   the iteration order  $k$ and the time stepsize $\t$.
  \end{lemma}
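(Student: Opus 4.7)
The natural plan is to reduce the bound for $\bar Y(t)$ on each sub-interval $[t_k, t_{k+1})$ to the grid-point bound already provided by Theorem \ref{co3.1}. Fix $\t\in(0,\t^*]$, an integer $k\geq 0$ and $t\in[t_k,t_{k+1})$, and set $s:=t-t_k\in[0,\t]$. Since
$$
\bar Y(t)-Y_k = f(Y_k)\,s + g(Y_k)\bigl(B(t)-B(t_k)\bigr),
$$
I would expand $V(\bar Y(t))$ by Taylor's formula at $Y_k$ with integral remainder, in exact analogy with \eqref{2*}, but with the full step $\t$ replaced by the partial step $s$ and the Brownian increment $\t B_k$ replaced by $B(t)-B(t_k)$. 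The inequalities of Lemma \ref{le1.3} and Lemma \ref{Z:1} then carry over verbatim: the a priori bounds \eqref{Y_01} on $|f(Y_k)|$ and $|g(Y_k)|$ still apply, because the base point of the expansion is the truncated iterate $Y_k$, and the Gaussian moments of $B(t)-B(t_k)$ are dominated by the corresponding moments of $\t B_k$ since $s\leq\t$.

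Feeding these estimates into the three-step computation of $\mathbb{E}_k[\xi_k]$, $\mathbb{E}_k[\xi_k^2]$ and $\mathbb{E}_k[\xi_k^3]$ used to establish Theorem \ref{co3.1}, and invoking the structure condition \eqref{1.4} to absorb the leading-order contribution $\tfrac{\rho\t}{2}\mathcal L(1+V(Y_k))^{\rho}$, produces the analogous one-step recursion
$$
\mathbb{E}_k\bigl[(1+V(\bar Y(t)))^{\rho}\bigr] \leq (1+V(Y_k))^{\rho}\bigl(1+C\t\bigr) + \lambda\t,
$$
uniformly in $t\in[t_k,t_{k+1})$. Taking unconditional expectations and applying the $V$-integrability bound \eqref{3.22y} to $\E(1+V(Y_k))^{\rho}$ then yields an estimate that is uniform in $t\in[t_k,t_{k+1})$ and in $k$ with $k\t\leq T$, which is precisely \eqref{cond-9}.

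The only point that needs mild care is to check that the remainder quantities $\mathcal R^{\t}V(Y_k)$, $\mathcal S_i^{\t}V(Y_k)$ and $\mathcal J^{\t}V(Y_k)$ and their second and third conditional moments retain the orders $\t^{2(1-\theta)}$ and $\t^{1-\theta}$ used in \eqref{vys2}--\eqref{vys4} when $\t$ is replaced by $s\leq\t$; this is automatic because every time-like factor appears with a nonnegative exponent. Consequently I do not anticipate any genuinely new obstacle: the argument is a localized rerun of the proof of Theorem \ref{co3.1} with the sub-interval increment $s$ in place of $\t$, together with a final appeal to \eqref{3.22y} at the grid point $t_k$.
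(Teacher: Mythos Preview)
Your proposal is correct and follows essentially the same route the paper indicates: the paper omits the proof and simply states that it is obtained ``using techniques in the proofs of Theorem \ref{co3.1} and Lemma \ref{L:C_1}'', which is precisely your plan of rerunning the Taylor expansion and the $\xi_k$ estimates with the partial increment $s=t-t_k$ and $B(t)-B(t_k)$ in place of $\t$ and $\t B_k$, then invoking the grid-point bound \eqref{3.22y}. Your observation that the truncation bounds \eqref{Y_01} still carry the factor $\t^{-\theta}$ (since they come from $\pi_\t$, not from the time increment) while the increment-driven factors only improve when $s\leq\t$ is exactly the point that makes the transfer automatic.
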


\begin{lemma}\la{lemma+3} Under the conditions of Theorem \ref{co3.1}, for any $\t\in (0,\t^*]$, define
 \be\la{cond-10}
\bar{\eta}_{\t} := \inf \big\{ t\geq 0: | {\bar{Y}}(t)|\geq  \varphi^{-1}\big(K\t^{-\theta}\big)\big\}.
 \ee
Then for any $T>0$,
  \be\la{cond-11}
 \mathbb{E}V^{\rho}({\bar{Y}}(T\wedge \bar{\eta}_{\t}))\leq C,
  \ee
 where $C$ is a positive constant independent of $\t$.
  \end{lemma}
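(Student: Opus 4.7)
The plan is to mirror the proof of Lemma~\ref{L:C_1}, but applied to the continuous-time interpolation $\bar{Y}(t)$ of~\eqref{cond-7}. On each subinterval $[t_k, t_{k+1})$, $\bar{Y}$ solves the SDE $d\bar{Y}(s) = f(Y_k)\,ds + g(Y_k)\,dB(s)$ with coefficients frozen at the grid value $Y_k$, which by virtue of the truncation map satisfies $|Y_k| \leq \varphi^{-1}(K\t^{-\theta})$ and hence~\eqref{Y_01} holds for $f(Y_k)$ and $g(Y_k)$. The event $\{\bar{\eta}_{\t} > t_k\}$ is $\mathcal{F}_{t_k}$-measurable, so the conditional-expectation machinery of Theorem~\ref{co3.1} applies essentially unchanged at each $t_k$.

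\textbf{One-step estimate.} Apply It\^{o}'s formula to $(1 + V(\bar{Y}(s)))^{\rho}$ on $[t_k \wedge \bar{\eta}_{\t}, t_{k+1} \wedge \bar{\eta}_{\t}]$. Writing $\mathscr{L}_k$ for the generator obtained by freezing the coefficients at $f(Y_k),g(Y_k)$ and taking $\mathbb{E}_k$, one obtains
\begin{align*}
\mathbb{E}_k\bigl[(1 + V(\bar{Y}(t_{k+1}\wedge\bar{\eta}_{\t})))^{\rho}\bigr]
&= (1+V(Y_k))^{\rho} \\
&\quad + \mathbb{E}_k\!\int_{t_k\wedge\bar{\eta}_{\t}}^{t_{k+1}\wedge\bar{\eta}_{\t}}\!\mathscr{L}_k(1+V(\bar{Y}(s)))^{\rho}\, ds.
\end{align*}
I would then Taylor-expand $\mathscr{L}_k(1+V(\cdot))^{\rho}$ about $Y_k$, using the derivative bounds $|D^{(n)}V|\leq c(1+V)^{1-n\delta_4}$ coming from $V \in \mathcal{V}^{4}_{\delta_{4}}$ and the truncation bounds~\eqref{Y_01}. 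Since $\bar{Y}(s)-Y_k = f(Y_k)(s-t_k)+g(Y_k)(B(s)-B(t_k))$, the Brownian moment identities yield, after integration in $s$, the one-step inequality
\begin{align*}
\mathbb{E}_k\bigl[(1 + V(\bar{Y}(t_{k+1}\wedge\bar{\eta}_{\t})))^{\rho}\bigr]
\leq& (1+V(Y_k))^{\rho} + \mathcal{L}(1+V(Y_k))^{\rho}\, \t\, I_{\{\bar{\eta}_{\t}>t_k\}} \\
&+ C(1+V(Y_k))^{\rho}\, \t^{2(1-\theta)}.
\end{align*}
This is the continuous-time counterpart of the discrete estimate produced in Theorem~\ref{co3.1} via Lemmas~\ref{le1.3} and~\ref{Z:1}.

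\textbf{Iteration and conclusion.} Inserting the Khasminskii-type bound~\eqref{1.4} for $\mathcal{L}(1+V(Y_k))^{\rho}$ gives the recursion $\mathbb{E}_k[(1+V(\bar{Y}(t_{k+1}\wedge\bar{\eta}_{\t})))^{\rho}] \leq (1+V(Y_k))^{\rho}(1 + C\t) + \lambda\t$. Iterating over $k$ from $0$ to $\lfloor T/\t\rfloor$, taking expectations and applying the discrete Gr\"{o}nwall inequality as in the final stage of Theorem~\ref{co3.1}, I obtain $\mathbb{E}V^{\rho}(\bar{Y}(T\wedge\bar{\eta}_{\t})) \leq C\exp(CT)$, which is the claimed bound.

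\textbf{Main obstacle.} The delicate step is controlling the Taylor remainder in $\mathscr{L}_k(1+V(\bar{Y}(s)))^{\rho}$ uniformly in $s\in [t_k\wedge\bar{\eta}_{\t},t_{k+1}\wedge\bar{\eta}_{\t}]$: near the stopping boundary $V(\bar{Y}(s))$ may be comparable to $V(\varphi^{-1}(K\t^{-\theta}))$, so the cross terms combining $|f(Y_k)|$, $|g(Y_k)|$ and high-order derivatives of $V$ must be dominated by $C(1+V(Y_k))^{\rho}\t^{1+\varepsilon}$ using the structural growth~\eqref{1*} of $D^{(n)}V$ and the truncation bounds~\eqref{Y_01}. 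This is precisely the continuous-time analogue of the roles played by Lemma~\ref{Z:1} and the remainder $\mathcal{J}^{\t}V(Y_k)$ of~\eqref{EQ1} in the discrete proof of Theorem~\ref{co3.1}.
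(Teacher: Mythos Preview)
Your plan is correct and marshals the right ingredients---the truncation bounds~\eqref{Y_01}, the $\mathcal V^4_{\delta_4}$ derivative control, the Hutzenthaler--Jentzen remainder device of~\eqref{EQ1}, condition~\eqref{1.4}, and a Gr\"onwall iteration---so the argument closes. The route, however, is not quite the one encoded in the paper's framework. In Theorem~\ref{co3.1} and Lemma~\ref{L:C_1} one never applies It\^o's formula to $(1+V)^\rho$ and then Taylor-expands the frozen generator $\mathscr{L}_k(1+V(\cdot))^\rho$; rather one Taylor-expands $V$ itself to fourth order about the last grid value $Y_k$, as in~\eqref{2*}, packages the increment as $\xi_k$, and only afterwards passes to the power $\rho$ via the scalar expansion~\eqref{yv3.20}--\eqref{vys4}. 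The payoff is that Lemmas~\ref{le1.3}--\ref{Z:1} carry over verbatim once $\t$ and $\t B_k$ are replaced by the stopped step length and stopped Brownian increment, and the Doob optional-stopping bookkeeping of Lemma~\ref{L:C_1} handles the conditioning. Your It\^o route is a legitimate alternative, but one step needs more care than you give it: after integrating the Taylor-expanded generator over $[t_k,\,t_{k+1}\wedge\bar\eta_{\t}]$ and taking $\E_k$, the odd-order Brownian corrections do \emph{not} cancel by ``moment identities'', because the upper limit of integration is path-dependent; those terms must instead be bounded in magnitude via~\eqref{Y_01} and moment estimates for the stopped increment, which still produces the needed $O\big((1+V(Y_k))^\rho\t^{2(1-\theta)}\big)$ error but is not automatic.
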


\begin{theorem}\la{lemma+4}
Let Assumption   \ref{a6} hold. Assume that
the conditions of Theorem \ref{co3.1} hold
 for $\rho>a$ and
  $\rho\geq  (2ar/\delta_2)\vee a q(r/2+\delta_2)$. If
  $\tau\in (0, \theta(\rho-a)/a\ell]$,
then  the process defined by (\ref{cond-7})  has the property that
\begin{align*}%\la{cond-12}
\E U(\bar{Y}(T)-X(T))\leq C \kappa(C\t^{\frac{q}{2}}),\qquad\forall ~T>0.
\end{align*}
\end{theorem}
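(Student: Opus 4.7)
The plan is to combine a two-point localization with an It\^o-formula analysis of the error process $e(t):=\bar{Y}(t)-X(t)$. Introduce the stopping times
$$\tau_{N}:=\inf\{t\geq 0:|X(t)|\geq N\},\quad \bar\eta_{\t}:=\inf\{t\geq 0:|\bar{Y}(t)|\geq N\},\quad \theta_{N,\t}:=\tau_{N}\wedge\bar\eta_{\t},$$
with the specific choice $N=\varphi^{-1}(K\t^{-\theta})$, so that $\bar\eta_{\t}$ agrees with \eqref{cond-10}. I then split
$$\E U(e(T))\leq \E U\big(e(T\wedge\theta_{N,\t})\big)+\E\big[U(e(T))\,I_{\{\theta_{N,\t}<T\}}\big]$$
and bound each piece separately.

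For the first (good) piece I apply It\^o's formula to $U(e(t))$ on $[0,T\wedge\theta_{N,\t}]$. Since $\bar{Y}$ has frozen coefficients $f(Y(s)),g(Y(s))$, I decompose
$$f(Y)-f(X)=\big(f(\bar{Y})-f(X)\big)+\big(f(Y)-f(\bar{Y})\big),\qquad g(Y)-g(X)=\big(g(\bar{Y})-g(X)\big)+\big(g(Y)-g(\bar{Y})\big),$$
and apply $|a+b|^{2}\leq(1+\iota)|a|^{2}+(1+1/\iota)|b|^{2}$ to convert $|U_{xx}(e)|\,|g(Y)-g(X)|^{2}$ into $(1+\iota)|U_{xx}(e)|\,|g(\bar{Y})-g(X)|^{2}$ plus a remainder. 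The principal block
$$2U_{x}(e)\big(f(\bar{Y})-f(X)\big)+(1+\iota)|U_{xx}(e)|\,|g(\bar{Y})-g(X)|^{2}\leq \bar{K}\,U(e)$$
is supplied by Assumption \ref{a6} with $(x,y)=(\bar{Y},X)$. The residuals are controlled via $|U_{x}(e)|\leq c U^{1-\delta_{2}}(e)$, $|U_{xx}(e)|\leq c U^{1-2\delta_{2}}(e)$ and the local regularity bounds for $f(Y)-f(\bar{Y})$ and $g(Y)-g(\bar{Y})$ in Assumption \ref{a6}; after integrating in time, a three-exponent H\"older inequality with exponents $\tfrac{1}{1-\delta_{2}},\tfrac{2}{\delta_{2}},\tfrac{2}{\delta_{2}}$ (respectively $\tfrac{1}{1-2\delta_{2}},\tfrac{1}{\delta_{2}},\tfrac{1}{\delta_{2}}$ for the diffusion piece) factors each residual as a product of $(\int_{0}^{t}\E U(e))^{1-\delta_{2}}$ (resp.\ $1-2\delta_{2}$), $(\E U^{2}(\bar{Y}-Y))^{\delta_{2}/2}$ (resp.\ $\delta_{2}$) and a $V$-moment block. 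The condition $\rho\geq 2ar/\delta_{2}$ makes the $V$-block bounded by Theorem \ref{th1} and Lemma \ref{lemma+2}, while $\rho\geq aq(r/2+\delta_{2})$ combined with the concavity of $\kappa^{2}\in\mathcal K_{\wedge}$, Jensen's inequality and Lemma \ref{lemma+1} yields $\E U^{2}(\bar{Y}-Y)\leq \kappa^{2}(C\t^{q/2})$. A further Young inequality linearizes the $(1-\delta_{2})$- and $(1-2\delta_{2})$-powers, after which Gr\"onwall's inequality produces
$$\E U\big(e(T\wedge\theta_{N,\t})\big)\leq C\,\kappa(C\t^{q/2}).$$

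For the second (bad) piece, Assumption \ref{a6} gives $U(e(T))\leq c_{1}(V^{a}(\bar{Y}(T))+V^{a}(X(T)))$. The moment H\"older inequality with conjugates $\rho/a>1$ and $\rho/(\rho-a)$, together with the $V^{\rho}$-bounds of Theorem \ref{th1} and Lemma \ref{lemma+2}, yields
$$\E\big[U(e(T))\,I_{\{\theta_{N,\t}<T\}}\big]\leq C\,\PP(\theta_{N,\t}<T)^{(\rho-a)/\rho}.$$
Using $\kappa(|x|^{q})\leq V^{a}(x)$ at the exit boundary and the moment bounds via an optional-stopping Markov estimate, $\PP(\theta_{N,\t}<T)\leq C\,\kappa(N^{q})^{-\rho/a}$. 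The remark after Assumption \ref{a6} gives $\kappa(N^{q})\geq C\t^{-\theta/\ell}$, so the bad-event contribution is at most $C\t^{\theta(\rho-a)/(a\ell)}\leq C\t^{\tau}\leq C\,\kappa(\t^{q/2})$ by the hypotheses $\tau\in(0,\theta(\rho-a)/(a\ell)]$ and $\t^{\tau}\leq \kappa(\t^{q/2})$. Adding the two pieces completes the argument.

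The main obstacle, I expect, lies in the good-event analysis: the $(1+\iota)$-slack of Assumption \ref{a6} must be exploited exactly so that the principal quadratic cancels, and the polynomial prefactors $1+U^{r}(Y)+U^{r}(\bar{Y})$ in the residuals must be shuttled into the $V$-moment bounds through a carefully balanced triple H\"older exponent, leaving only $\E U^{2}(\bar{Y}-Y)$ to be processed via Lemma \ref{lemma+1} and the concavity of $\kappa^{2}$. The two moment assumptions $\rho\geq 2ar/\delta_{2}$ and $\rho\geq aq(r/2+\delta_{2})$ are precisely tuned so that every resulting factor is finite, and the calibration $\tau\leq\theta(\rho-a)/(a\ell)$ is exactly what matches the bad-event decay with the $\kappa(\t^{q/2})$ rate delivered by Gr\"onwall.
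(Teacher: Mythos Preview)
Your proposal is correct and follows essentially the same route as the paper: localize with stopping times, apply It\^o's formula to $U(e)$, split the increments $f(Y)-f(X)$ and $g(Y)-g(X)$ through $\bar Y$ so that the $(1+\iota)$-slack in Assumption~\ref{a6} absorbs the principal block, handle the residuals via the polynomial growth bounds together with the $V^\rho$-moments, Lemma~\ref{lemma+1} and the concavity of $\kappa^2$, and close with Gronwall; the bad event is controlled by a Markov/H\"older estimate that reduces to $\kappa(\t^{q/2})$ via the calibration $\tau\le\theta(\rho-a)/(a\ell)$. The only cosmetic differences are that the paper (i) also includes the stopping time $\eta_\t$ of Lemma~\ref{L:C_1} (harmless but not strictly needed, since $|Y(s)|\le\varphi^{-1}(K\t^{-\theta})$ by truncation), (ii) uses a weighted Young splitting of $\E U(\bar e(T))$ rather than your direct H\"older on the indicator, and (iii) applies Young pointwise \emph{before} integrating---yielding $|f(\bar Y)-f(Y)|^{1/\delta_2}$ and then a two-factor H\"older---whereas you run a three-factor H\"older first and linearize afterward; both orderings lead to the same Gronwall inequality and the same use of $\rho\ge 2ar/\delta_2$ and $\rho\ge aq(r/2+\delta_2)$.
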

\begin{proof}
Define $\bar{\theta}_{ \t}=\tau_{\varphi^{-1}(K\t^{-\theta})} \wedge \eta_{\t}\wedge \bar{\eta}_{\t}$, $  \Omega_1:= \{\omega:~\bar{\theta}_{  \t} > T\}$, $\bar{e}(t)= \bar{Y}(t)-X(t),$ for  any $t\in[0, T]$,
where  $\tau_N$, $\eta_{\t}$ and $\bar{\eta}_{\t}$ are defined by (\ref{tau_N}), (\ref{3.18}) and (\ref{cond-10}),  respectively.
Using the Young inequality,   we have
\begin{align}\la{cond-13}
\E U(\bar{e}(T)) =& \E\lf[U(\bar{e}(T)) I_{\Omega_1}\rt]
+ \E\lf[U(\bar{e}(T)) I_{\Omega_1^c}\rt] \nonumber \\
 \leq &\E\lf[U(\bar{e}(T)) I_{\Omega_1}\rt]
+ \frac{a\kappa(\t^{\frac{q}{2}})}{\rho}\E\lf[U^{\rho/a}(\bar{e} (T)) \rt]+ \frac{\rho-a}{\rho\kappa^{\frac{a}{  \rho-a }}(\t^{\frac{q}{2}})} \PP(\Omega_1^c).
\end{align}
 It follows from the results of   Theorem  \ref{th1} and Lemma \ref{lemma+2} that
\begin{align}\label{cond-19}
\frac{a\kappa (\t^{\frac{q}{2}})}{\rho} \E  U^{\rho/a}\big(\bar{e}_{\t }(T)\big)
\leq \!  C \kappa (\t^{\frac{q}{2}})\Big[ \E V^{\rho}(X(T))
+\!   \E V^{\rho}(\bar{Y}(T)) \Big]
\leq \!   C \kappa (\t^{\frac{q}{2}}).
\end{align}
 Moreover,
by the virtue of Theorem \ref{th1},  Lemmas \ref{L:C_1} and \ref{lemma+3} we yield
\begin{align}\la{cond-20}
&\frac{\rho-a}{ \rho\big[\kappa(\t^{\frac{q}{2}})\big]^{\frac{a}{\rho-a}}} \PP(\Omega_1^c)\nn\\
\leq &  \frac{ \rho-a}{\rho\big[\kappa(\t^{\frac{q}{2}})\big]^{\frac{a}{\rho-a}}}  \Big( \PP {\{\tau_{\f^{-1} ( K\t^{-\theta} ) } \leq T\}} +\PP {\{\eta_{  \t} \leq T\}} + \PP {\{\bar{\eta}_{  \t} \leq T\}} \Big)\nonumber\\
\leq &  \frac{ \rho-a}{\rho\big[\kappa(\t^{\frac{q}{2}})\big]^{\frac{a}{\rho-a}}}
\frac{3C}{\big[\kappa(|\f^{-1}(K\t^{-\theta})|^{q})\big]^{\frac{\rho}{a}}}
\leq   C  \big[\kappa(\t^{\frac{q}{2}})\big]^{\frac{a}{a-\rho}}
\Big(K\t^{-\theta}/C-1\Big)^{-\frac{\rho}{a\ell} } \nn\\
\leq &   C \big[\kappa(\t^{\frac{q}{2}})\big]^{\frac{a}{a-\rho}+\frac{\theta\rho}{a\ell\tau}}
 \leq  C\kappa(\t^{\frac{q}{2}}).
\end{align}
 On the other hand, note that for any $t\in (0, T \wedge \bar{\theta}_{ \t}] $,
 $$
  {\bar{e}} (t)  = \int_0^t \Big(f(X(s))-f(Y(s))\Big)\mathrm{d}s+\int_0^t \Big(g(X(s))-g(Y(s))\Big)\mathrm{d}B(s).
$$
Using
 It\^{o}'s formula we obtain
\begin{align*}
&U\big({\bar{e}} (T \wedge \bar{\theta}_{ \t})\big)\nn\\
= & \frac{1}{2}\int_0^{T \wedge \bar{\theta}_{ \t}} \Big[2 U_{x}\big({\bar{e}} (s)\big)\Big( f( X(s))-f(Y(s)) \Big)\nn\\
&\qquad +\mathrm{tr}\Big[\Big(g(X(s))-g(Y(s))\Big)^TU_{xx}\big({\bar{e}} (s)\big)\Big(g(X(s))-g(Y(s))\Big)\Big] \mathrm{d}s\! +\! M(T \wedge \bar{\theta}_{ \t})\nn\\
\leq &\frac{1}{2}\int_0^{T \wedge \bar{\theta}_{ \t}} \Big[2 U_{x}\big({\bar{e}} (s)\big)\Big( f(X(s))-f(Y(s)) \Big) \nn\\
&\qquad \qquad \qquad  +|U_{xx}\big({\bar{e}} (s)\big)||g(X(s))-g(Y(s))|^2\Big] \mathrm{d}s
 + M(T \wedge \bar{\theta}_{ \t}),
\end{align*}
where $
M(t)= \int_0^t  U_{x}\big({\bar{e}} (s)\big) \big(g(X(s))-g(Y(s))\big)\mathrm{d}B(s)
$
 is a local martingale with initial value 0. This
implies
\begin{align}\la{cond-4.14}
\mathbb{E}U\big({\bar{e}} (T \wedge \bar{\theta}_{ \t})\big)
\leq& \frac{1}{2}\mathbb{E}\int_0^{T \wedge \bar{\theta}_{ \t}} \Big[2 U_{x}\big({\bar{e}} (s)\big)\Big( f(X(s))-f(Y(s)) \Big) \nn\\
&\qquad \qquad \qquad +|U_{xx}\big({\bar{e}} (s)\big)||g(X(s))-g(Y(s))|^2\Big] \mathrm{d}s .
\end{align}
Then an application of Young's  inequality together with Assumption \ref{a6} leads to
\begin{align*}
&2 U_{x}\big({\bar{e}} (s)\big)\Big( f(X(s))-f(Y(s)) \Big)
 +|U_{xx}\big({\bar{e}} (s)\big)||g(X(s))-g(Y(s))|^2\nn\\
=& 2 U_{x}\big({\bar{e}} (s)\big)\Big( f(X(s))-f(\bar{Y}(s))
+f(\bar{Y}(s))-f(Y(s)) \Big)\nn\\
 &+|U_{xx}\big({\bar{e}} (s)\big)||g(X(s))-g(\bar{Y}(s))+g(\bar{Y}(s))-g(Y(s))|^2\nn\\
\leq &2 U_{x}\big({\bar{e}} (s)\big)\Big( f(X(s))-f(\bar{Y}(s)) \Big)+2 U_{x}\big({\bar{e}} (s)\big)\Big( f(\bar{Y}(s))-f(Y(s)) \Big)\nn\\
&  +(1\!+\iota)|U_{xx}\big({\bar{e}} (s)\big)||g(X(s))\!-g(\bar{Y}(s))|^2
+\big(1\!+\frac{1}{\iota}\big)|U_{xx}\big({\bar{e}} (s)\big)||g(\bar{Y}(s))\!-g(Y(s))|^2\nn\\
%\leq &C\Big[U\big({\bar{e}} (s)\big)
% + \big| U_{x}\big({\bar{e}} (s)\big)\big|\big| f(\bar{Y}(s))-f(Y(s)) \big|
% + |U_{xx}\big({\bar{e}} (s)\big)||g(\bar{Y}(s))-g(Y(s))|^2\Big]\nn\\
\leq &C\Big[U\big({\bar{e}} (s)\big)
 \!+ U^{1-\! \delta_2} \big({\bar{e}} (s)\big) \big| f( \bar{Y}(s))\!-f(Y(s)) \big|
+\! U^{1-\! 2 \delta_2} \big({\bar{e}} (s)\big)|g(\bar{Y}(s))\!-g(Y(s))|^2\Big]\nn\\
\leq &C\Big[U\big({\bar{e}} (s)\big)
 +   \big| f(\bar{Y}(s))-f(Y(s)) \big|^{\frac{1}{\delta_2}}
+  |g(\bar{Y}(s))-g(Y(s))|^{\frac{1}{\delta_2}}\Big]\nn\\
\leq &C\Big[U\big({\bar{e}} (s)\big)
 +   \big(1+U^{r}(Y(s))+U^{r}(\bar{Y}(s))\big)^{\frac{1}{\delta_2}}U\big(\bar{Y}(s)-Y(s)\big)\nn\\
&\qquad +\big(1+U^{r}(Y(s))+U^{r}(\bar{Y}(s))\big)^{\frac{1}{2\delta_2}}U\big(\bar{Y}(s)-Y(s)\big)\Big]\nn\\
\leq & C\Big[U\big({\bar{e}} (s)\big)
 +   \big(1+U^{\frac{r}{\delta_2}}(Y(s))+U^{\frac{r}{\delta_2}}(\bar{Y}(s))\big)U\big(\bar{Y}(s)-Y(s)\big)\Big].
    \end{align*}
Inserting the above inequality into \eqref{cond-4.14} and applying H\"{o}lder's equality and Jensen's equality, and then Lemmas \ref{lemma+1} and \ref{lemma+2}, we have
\begin{align}\la{cond-23}
 &\E\Big[U\big({\bar{e}} (T \wedge \bar{\theta}_{ \t})\big) \Big] \nn\\
%% \leq&  C  \int_0^{T}
%% \E\Big[U\big( \bar{e}(s\wedge \bar{\theta}_{ \t})\big)\Big] \mathrm{d}s
%%+C\int_0^{T}\mathbb{E}\Big[\big(1+U^{\frac{r}{ \delta_2}}(Y(s))
%%+U^{\frac{r}{ \delta_2}}(\bar{Y}(s))\big)U\big(\bar{Y}(s)-Y(s)\big)\Big]\mathrm{d}s \nn\\
% \leq&  C  \int_0^{T}
% \E\Big[U\big( \bar{e}(s\wedge \bar{\theta}_{ \t})\big)\Big]  \mathrm{d}s\nn\\
%&~~~~~ +C\int_0^{T}\bigg[\mathbb{E}\Big(1+U^{\frac{r}{ \delta_2}}(Y(s))+U^{\frac{r}{ \delta_2}}(\bar{Y}(s))\Big)^2\bigg]^{\frac{1}{2}}
%\bigg[\mathbb{E}\kappa^2\big(|Y(s)
%-\bar{Y}(s)|^{q}\big)\bigg]^{\frac{1}{2}} \mathrm{d}s \nn\\
 \leq&  C  \int_0^{T}
 \E\Big[U\big( \bar{e}(s\wedge \bar{\theta}_{ \t})\big)\Big]  \mathrm{d}s\nn\\
&\qquad    +C\int_0^{T}\bigg[\mathbb{E}\Big(1+U^{\frac{r}{ \delta_2}}(Y(s))+U^{\frac{r}{ \delta_2}}(\bar{Y}(s))\Big)^2\bigg]^{\frac{1}{2}}
 \kappa\big(\mathbb{E}|Y(s)
-\bar{Y}(s)|^{q}\big) \mathrm{d}s \nn\\
 \leq&  C  \int_0^{T}
 \E\Big[U\big( \bar{e}(s\wedge \bar{\theta}_{ \t})\big)\Big] \mathrm{d}s
 +C\int_0^{T}\bigg[\Big(1+\big(\mathbb{E}V^{\rho}(Y(s))\big)^{\frac{2ar}{\rho\delta_2} }
\nn\\
&\qquad\quad +\big(\mathbb{E}V^{\rho}(\bar{Y}(s))\big)^{\frac{2ar}{\rho\delta_2} }\Big)^{\frac{1}{2}}\times
 \kappa\Big(\big(\mathbb{E}|Y(s)
-\bar{Y}(s)|^{\frac{2\rho}{a(r+2\delta_2)}}\big)^{\frac{a(r+2\delta_2)q}{2\rho}}\Big)\bigg]
 \mathrm{d}s \nn\\
  \leq&  C  \int_0^{T}
 \E\Big[U\big( \bar{e}(s\wedge \bar{\theta}_{ \t})\big)\Big] \mathrm{d}s
+C \kappa(C\t^{\frac{q}{2}}).%~~~~~~~{\color{red} 2ar\leq \rho \delta_2,~~~~\frac{a(r+2\delta_2)q}{2\rho}\leq 1}
    \end{align}
Applying the Gronwall inequality, we yield that
\begin{align} \label{cond-16}
\E\Big[U({\bar{e}} (T)) I_{\Omega_1}\Big]\leq   \E\Big[U\big({\bar{e}} (T \wedge \bar{\theta}_{ \t})\big) \Big]
  \leq C \kappa(C\t^{\frac{q}{2}}).
\end{align}
Inserting (\ref{cond-19}), (\ref{cond-20}) and (\ref{cond-16}) into (\ref{cond-13}) yields the desired assertion.
\end{proof}

Next, we provide two remarks to demonstrate   \eqref{xeq1} of  Assumption   \ref{a6}.

\begin{remark} For any $x\in \mathbb{R}$,  let  $\kappa(|x|^2)=\log(1+|x|^2)$, for any $\Delta\in (0, 0.6]$  we have
$\Delta^{\tau}\leq \kappa (\Delta^{\frac{q}{2}})$ with $\tau\geq1.5$, $q=2$. Let  $V(x)=U(x)=\log(1+|x|^2)$, we have
\begin{align*}
|U_x(x)|=\frac{2|x|}{1+x^2}\leq 3\sqrt{\log(1+x^2)},\qquad |U_{xx}(x)|=\Big|\frac{2}{1+x^2}-\frac{4x^2}{(1+x^2)^2}\Big|\leq 3.
\end{align*}
Thus the above inequalities imply $a=1$, $\delta_2=1/2$,
  $U\in \bar{\mathcal{V}}^{2}_{1/2}$ and $V\in  \mathcal{V}^{4}_{\delta_4}$ with $\delta_4\in(0, 1/4]$. For any $x, y\in \mathbb{R}$, we have
  \begin{align*}
U(x+y)\!=\log\big(1\!+|x+y|^2\big)\leq\! 4\Big(\log\big(1+|x|^2\big)\!+\log\big(1+|y|^2\big)\Big)\!\leq 4\big(U(x)\!+U(y)\big),
\end{align*}
 then  \eqref{xeq1} holds for $c_1=4$.  Clear, for any $r>0$, we have $r> 2(\delta_4/a-\delta_2)$.
\end{remark}

\begin{remark}  For any $x, y\in \mathbb{R}^d$, let  $\kappa(|x|^q)=|x|^{\bar{q}}$, for any $\Delta\in (0, 1]$  we have
$\Delta^{\tau}\leq \kappa (\Delta^{\frac{q}{2}})$ with $\tau\geq\bar{q}/2$,  $q=2\bar{q}$. Let $V(x)=|x|^{2}$ and $U(x)=|x|^{\bar{q}}$ with  $\bar{q}\geq2$, we have
\begin{align*}
U(x+y) \leq 2^{\bar{q}-1}\big(U(x)+U(y)\big),\ \   |U_x(x)|=\big|\bar{q}|x|^{\bar{q}-2}x^T\big|=\bar{q}|x|^{\bar{q}-1}\leq \bar{q}^2\big(|x|^{\bar{q}}\big)^{1-\frac{1}{\bar{q}}},
\end{align*}
and
\begin{align*} |U_{xx}(x)|=\big|\bar{q}(\bar{q}-2)|x|^{\bar{q}-4}xx^T\!+\bar{q}|x|^{\bar{q}-2}\mathbb{I}_{d}\big|
\leq\bar{q}(\bar{q}-1)|x|^{\bar{q}-2}\leq \bar{q}^2\big(|x|^{\bar{q}}\big)^{1-\frac{2}{\bar{q}}}.
\end{align*}
Thus the above inequalities imply  $a=\bar{q}/2$, $\delta_2=1/\bar{q}$,
  $U\in \bar{\mathcal{V}}^{2}_{1/\bar{q}}$ and $V\in  \mathcal{V}^{4}_{\delta_4}$ with $\delta_4\in(0, 1/\bar{q}]$,   then  \eqref{xeq1} holds for $c_1=2^{\bar{q}-1}$.
  Clear, for any $r>0$, we have $r> 2(\delta_4/a-\delta_2)$. If choose $\rho=p/2, \delta_4= 1/\bar{q} $ and $p>\bar{q}$,
  it should be emphasized that under such circumstances,
we can get the same rate of convergence as the literature \cite{Li18}.
\end{remark}

\section{Asymptotic stability}\label{5s-b}
Since the stability is one of the major concerns in many applications, the easily implementable scheme preserving the underlying stability is desired eagerly. In this  section we  cite the stability criterion  of the exact solution for SDEs (see, e.g. \cite{Mao2002, Mao08}) and go a further step to  construct the new explicit scheme while keeping this   long-time property well.   Moreover, for the stability purpose, we assume furthermore in this section that
 SDE (\ref{e1})
with  drift and diffusion satisfying $f(x^{*})\equiv \mathbf{0}\in \mathbb{R}^{d},~ g(x^{*}) \equiv \mathbf{0}\in \mathbb{R}^{d\times m}$  for some
$x^{*}\in \mathbb{R}^{d}$,
 which implies    $X(t)\equiv x^{*}$  is  a {\it trivial solution} of SDE (\ref{e1}) with the initial  value  $x_0= x^{*}$. Without loss of generality, we assume in this section
that  $x^{*}=\mathbf{0}$, namely, $f(\mathbf{0})\equiv\mathbf{0}$ and $g(\mathbf{0})\equiv \mathbf{0}.$
%\begin{align*}%\la{eq0solu}
%f(\mathbf{0})\equiv\mathbf{0}~~~~~\mathrm{and}~~~~~g(\mathbf{0})\equiv \mathbf{0}.
%\end{align*}

\subsection{Stability of the exact solution}\label{a-ss}
 To begin  this subsection, we cite a stochastic version of the LaSalle theorem on almost sure stability, please see details in  \cite{Mao2002}.
\begin{lemma}[\!\cite{Mao2002}]\la{th4.1}
 Assume that for some $\rho>0$,   there are   functions  $V\in \mathcal{C}_\infty^{2} (\RR^d;   \bar{\RR}_+)$   and  $w\in \mathcal{C} (\RR^d;   \bar{\RR}_+)$  such that
  \begin{align}\la{w1}
{\mathcal{L}}V^{\rho}(x)%:= \frac{\rho}{2} V^{\rho-2}(x)\Big[2 V(x)
%\mathcal{L}V(x) +(\rho-1)\big|\langle D^{(1)}V(x), g(t, x)\rangle\big|^2\Big]
\leq  -  w(x), \qquad\forall x\in  \mathbb{R}^d.
 \end{align}
 Then, for every $x_0\in \mathbb{R}^d$,
 $\mathrm{Ker}(w):=\{x\in \mathbb{R}^d| w(x)=0\}\neq \emptyset$, and the solution  $X(t)$ of (\ref{e1}) has the property
$$
 \limsup_{t\rightarrow\infty} V^{\rho}(X(t))<\infty,\qquad \lim_{t\rightarrow \infty}d(X(t),\mathrm{Ker}(w))=0\qquad\mathrm{a.s.}
$$
Moreover,  if $w(x)=0$ iff $x=\mathbf{0}\in \RR^d$, then  $\lim_{t\rightarrow \infty}X(t)=\mathbf{0} \ \ \mathrm{a.s.}$
\end{lemma}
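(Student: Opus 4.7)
The plan is to apply It\^o's formula to $V^{\rho}(X(t))$ and invoke the nonnegative semimartingale convergence theorem of Liptser--Shiryaev. Since $V\in\mathcal{C}_{\infty}^{2}(\mathbb{R}^d;\bar{\mathbb{R}}_+)$ and $V^{\rho}\geq 0$, first I would localize by the stopping times $\tau_n=\inf\{t\geq 0:|X(t)|\geq n\}$ (well-defined because the hypotheses implicitly imply regularity: the drift of $V^{\rho}$ under $\mathcal{L}$ is bounded above by $0$, so a standard Khasminskii-type argument prevents explosion). Applying It\^o's formula gives, for $t\geq 0$,
\begin{align*}
V^{\rho}(X(t\wedge\tau_n))=V^{\rho}(x_0)+\int_0^{t\wedge\tau_n}\mathcal{L}V^{\rho}(X(s))\,\mathrm{d}s+M(t\wedge\tau_n),
\end{align*}
where $M(t):=\int_0^{t}\langle D^{(1)}V^{\rho}(X(s)),g(X(s))\rangle\,\mathrm{d}B(s)$ is a continuous local martingale starting at $0$. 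Using \eqref{w1}, we obtain
\begin{align*}
V^{\rho}(X(t\wedge\tau_n))+\int_0^{t\wedge\tau_n}w(X(s))\,\mathrm{d}s\leq V^{\rho}(x_0)+M(t\wedge\tau_n).
\end{align*}
Letting $n\to\infty$ (using Fatou on the left side) and invoking the nonnegative semimartingale convergence theorem, I would conclude that $\tau_n\to\infty$ a.s.\ (no explosion), that $V^{\rho}(X(t))$ converges a.s.\ to a finite limit, and that $\int_0^{\infty}w(X(s))\,\mathrm{d}s<\infty$ a.s.; in particular $\limsup_{t\to\infty}V^{\rho}(X(t))<\infty$ a.s.

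Next I would establish $\mathrm{Ker}(w)\neq\emptyset$ by contradiction. If $w(x)>0$ for every $x\in\mathbb{R}^d$, then since $V\in\mathcal{C}_{\infty}^{2}$ and $V^{\rho}(X(t))$ is a.s.\ bounded, the path $\{X(t):t\geq 0\}$ stays in a (random) compact set $\mathbb{K}$; by continuity of $w$, $\inf_{x\in\mathbb{K}}w(x)=:\delta>0$, which forces $\int_0^{\infty}w(X(s))\,\mathrm{d}s=\infty$, contradicting the previous step.

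The main obstacle is the convergence $d(X(t),\mathrm{Ker}(w))\to 0$ a.s. The plan is again by contradiction: suppose with positive probability there exist $\varepsilon>0$ and $t_k\uparrow\infty$ with $d(X(t_k),\mathrm{Ker}(w))\geq 2\varepsilon$. On the event of a.s.\ boundedness, $X(t)$ lives in some compact $\mathbb{K}$; by continuity of $w$ and compactness of the sublevel set $\{x\in\mathbb{K}:d(x,\mathrm{Ker}(w))\geq\varepsilon\}$, one has $w(x)\geq\delta>0$ on that sublevel set. To turn pointwise excursions into positive integral mass, I would use sample path continuity of $X(t)$ combined with a uniform modulus of continuity estimate; specifically, on any finite window, the Burkholder--Davis--Gundy inequality applied to $\int g(X)\,\mathrm{d}B$ and boundedness of $f(X),g(X)$ on $\mathbb{K}$ yield an a.s.\ modulus of continuity, so each excursion above the level $2\varepsilon$ lasts at least some deterministic time $\eta>0$ before $X(t)$ can re-enter $\{d(\cdot,\mathrm{Ker}(w))<\varepsilon\}$. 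Summing contributions from countably many excursions forces $\int_0^{\infty}w(X(s))\,\mathrm{d}s=\infty$ on an event of positive probability, contradiction.

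Finally, when $\mathrm{Ker}(w)=\{\mathbf{0}\}$, $d(X(t),\mathrm{Ker}(w))=|X(t)|$, so $\lim_{t\to\infty}X(t)=\mathbf{0}$ a.s.\ follows immediately, completing the proof.
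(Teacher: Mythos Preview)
The paper does not give its own proof of this lemma; it is cited from Mao (2002). Your outline follows exactly the standard route used there: It\^o's formula plus the nonnegative semimartingale convergence theorem yield non-explosion, a.s.\ convergence of $V^{\rho}(X(t))$, and $\int_0^{\infty}w(X(s))\,\mathrm{d}s<\infty$, and your compactness arguments for $\mathrm{Ker}(w)\neq\emptyset$ and for the final assertion are fine.

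There is, however, a genuine gap in your argument for $d(X(t),\mathrm{Ker}(w))\to 0$. You assert that a BDG-based modulus of continuity forces each excursion of $X$ into $\{d(\cdot,\mathrm{Ker}(w))\geq 2\varepsilon\}$ to last at least a \emph{deterministic} time $\eta>0$. This is false: there is no uniform-in-time modulus of continuity for a diffusion on $[0,\infty)$ (already for Brownian motion, $\sup_{n\in\mathbb{N}}|B(n+h)-B(n)|=\infty$ a.s.\ for every fixed $h>0$), and even on a finite window the modulus is random. The remedy used in Mao (2002) is to introduce the successive crossing times $\sigma_k<\tau_k$ of the levels $2\varepsilon$ and $\varepsilon$. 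On the (random) compact set $\mathbb{K}$ one has $w\geq\delta>0$ on $\{d(\cdot,\mathrm{Ker}(w))\geq\varepsilon\}\cap\mathbb{K}$, so $\int_0^{\infty}w(X(s))\,\mathrm{d}s<\infty$ forces $\sum_k(\tau_k-\sigma_k)<\infty$. Then the SDE with bounded $f,g$ on $\mathbb{K}$ together with BDG gives $\varepsilon^{2}\,\mathbb{P}\{\sigma_k<\infty,\ X\text{ stays in }\mathbb{K}\}\leq C\,\mathbb{E}\big[(\tau_k-\sigma_k)I_{\{\sigma_k<\infty\}}\big]$, and summing over $k$ plus Borel--Cantelli shows that infinitely many crossings occur with probability zero. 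So your intuition that ``excursions contribute integral mass'' is right, but the duration control must come from $\sum_k(\tau_k-\sigma_k)<\infty$ and a BDG estimate on the increments $X(\tau_k)-X(\sigma_k)$, not from a global modulus of continuity. (By contrast, in the paper's discrete-time analogue, Theorem~\ref{th4.2}, the series bound $\sum_i w(Z_i)\t<\infty$ immediately gives $w(Z_k)\to 0$, which is why no such step is needed there.)
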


Based on the above lemma, we further estimate the  exponential convergence rate.
\begin{corollary}\la{coco4.1}
Assume that the conditions of Lemma \ref{th4.1} hold and, moreover, $w(x)\geq \mu V^{\rho}(x)$
 for all $x\in \mathbb{R}^d$, where $\mu$ is a positive constant.
 Then
 the solution $X(t)$ of SDE (\ref{e1})
 has the  properties that
\begin{align}\la{yhfF_0}
   \limsup_{t\rightarrow \infty}\frac{\log\big(\E V^{\rho}(X(t))\big)}{t} \leq  - \mu, \qquad\quad
\end{align}
and
\begin{align}\la{yhf0F_0}
   \limsup_{t\rightarrow \infty}\frac{\log\big(V(X(t))\big)}{t} \leq  -\frac{\mu}{\rho} \qquad a.s.
\end{align}
  \end{corollary}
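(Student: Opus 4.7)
The plan is to apply It\^{o}'s formula to the process $\Phi(t,x):=e^{\mu t}V^{\rho}(x)$ and exploit the strengthened hypothesis $\mathcal{L}V^{\rho}(x)\leq -w(x)\leq -\mu V^{\rho}(x)$. Computing the stochastic differential,
\begin{align*}
d\bigl[e^{\mu t}V^{\rho}(X(t))\bigr]
=e^{\mu t}\bigl[\mu V^{\rho}(X(t))+\mathcal{L}V^{\rho}(X(t))\bigr]dt
+e^{\mu t}\langle D^{(1)}V^{\rho}(X(t)),g(X(t))\,dB(t)\rangle,
\end{align*}
so with $A(t):=-\int_{0}^{t}e^{\mu s}\bigl[\mu V^{\rho}(X(s))+\mathcal{L}V^{\rho}(X(s))\bigr]ds\geq 0$ (nondecreasing by the hypothesis), one may write
\begin{align*}
e^{\mu t}V^{\rho}(X(t))=V^{\rho}(x_0)-A(t)+M(t),
\end{align*}
where $M(t)$ is a continuous local martingale vanishing at the origin.

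For the moment bound \eqref{yhfF_0}, I would localize with the stopping times $\tau_n=\inf\{t\geq 0:|X(t)|\geq n\}$, which tend to infinity a.s.\ by the regularity of $X(t)$. Stopping at $t\wedge\tau_n$ makes $M(t\wedge\tau_n)$ a genuine martingale, and then taking expectations yields $\mathbb{E}\bigl[e^{\mu(t\wedge\tau_n)}V^{\rho}(X(t\wedge\tau_n))\bigr]\leq V^{\rho}(x_0)$. An application of Fatou's lemma as $n\to\infty$ gives $\mathbb{E}V^{\rho}(X(t))\leq V^{\rho}(x_0)e^{-\mu t}$, from which \eqref{yhfF_0} follows by taking logarithms and dividing by $t$.

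For the pathwise statement \eqref{yhf0F_0}, I would invoke the nonnegative semimartingale convergence theorem of Liptser and Shiryayev to the decomposition above: since $e^{\mu t}V^{\rho}(X(t))\geq 0$, $A(t)$ is continuous and nondecreasing, and $M(t)$ is a continuous local martingale, that theorem ensures $\lim_{t\to\infty}e^{\mu t}V^{\rho}(X(t))$ exists and is finite a.s.\ (and $A(\infty)<\infty$ a.s.). Denoting this finite limit by $\zeta\geq 0$, we obtain $\limsup_{t\to\infty}[\,\log V^{\rho}(X(t))+\mu t\,]\leq \log^{+}\zeta<\infty$ a.s., which immediately yields \eqref{yhf0F_0} after division by $\rho t$ (with the convention that on $\{\zeta=0\}$ the $\limsup$ is $-\infty$, compatible with the bound).

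The routine part is the It\^{o} expansion and the Gronwall-free moment estimate. The main technical subtlety, and where one has to be careful, is the justification that the local martingale piece in the decomposition may indeed be handled by the nonnegative semimartingale convergence theorem without any a priori integrability assumption on $\sup_{t}e^{\mu t}V^{\rho}(X(t))$; this is precisely the advantage of that theorem over classical supermartingale convergence, and it is the reason the a.s.\ conclusion goes through under the same hypotheses that were sufficient for the moment bound.
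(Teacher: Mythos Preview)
Your proposal is correct and follows essentially the same route as the paper: apply It\^{o}'s formula to $e^{\mu t}V^{\rho}(X(t))$, use $\mathcal{L}V^{\rho}\leq -\mu V^{\rho}$ to obtain $\E[e^{\mu t}V^{\rho}(X(t))]\leq V^{\rho}(x_0)$ for the moment bound, and invoke the nonnegative semimartingale convergence theorem for the almost sure statement. If anything, your write-up is more careful than the paper's, which omits the explicit localization via $\tau_n$ and Fatou's lemma and simply asserts the expectation inequality.
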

\begin{proof}
For  any given   $\rho >0$,  using  It\^{o}'s formula (see  e.g., \cite[Theorem 6.4, p.36]{Mao08})  and $w(x)\geq \mu V^{\rho}(x)$  we  derive that
   $
 \mathbb{E} \big[\mathrm{e}^{ \mu  t}V^{\rho}(X(t))\big]
\leq  V^{\rho}(x_0).
$
  This implies
the desired assertion (\ref{yhfF_0}).
%where
%  $$
%%M(t)= \rho\int_{0}^{t}\mathrm{e}^{\mu s} V^{\rho-1}(X(s)) \big\langle D^{(1)}V(X(s)), g( X(s))\mathrm{d}B(s)\big\rangle
%%$$
%%is a real-valued local continuous martingale with $M(0)=0$.
%  Borrowing the proof method of \cite[Corollary 4.5]{li2018jde} we can get the desired assertions  but omit the details to avoid duplication.  \qed
%%
%%
%%
%Clearly, the required assertions  hold  for the solution with the initial data $x_0=\mathbf{0}\in \RR^d$.
%Thus in the following we only consider the case that $x_0\neq \mathbf{0}$. For any fixed  $x_0\neq \mathbf{0}$, by virtue of \cite[Lemma 3.2, p.120]{Mao08}, $X(t)\neq \mathbf{0}$ for all $t\geq 0$ almost surely.
%For the any given   $\rho >0$,  using  It\^{o}'s formula (see  e.g., \cite[Theorem 6.4, p.36]{Mao08})  we obtain
%\begin{align}\la{yhfF_2}
% \mathrm{e}^{ \mu  t} V^{\rho}(X(t))
%\leq  V^{\rho}(x_0)  +M(t),
%\end{align}
%where
%  $$
%M(t)= \rho\int_{0}^{t}\mathrm{e}^{\mu s} V^{\rho-1}(X(s)) \big\langle D^{(1)}V(X(s)), g( X(s))\mathrm{d}B(s)\big\rangle
%$$
%is a real-valued local continuous martingale with $M(0)=0$.
%Thus, taking expectations on both sides yields
%\begin{align*}%\la{0yhfF_4}
% \mathbb{E} V^{\rho}(X(t))
% \leq  V^{\rho}(x_0)\mathrm{e}^{- \mu  t}.
%\end{align*}
%This implies
%the desired assertion (\ref{yhfF_0}).  On the other hand,
Moreover, by  the nonnegative semimartingale convergence theorem (see, e.g., \cite[Theorem 3.9, p.14]{Mao08}), we obtain
$
\limsup\limits_{t\rightarrow \infty} \mathrm{e}^{ \mu  t} V^{\rho}(X(t)) <\infty~a.s.
$ We can easily carry out the proof of this corollary and hence is omitted to avoid repetition.
\end{proof}

\subsection{%Reproduction of stability of the numerical solutions for the exact solution
  Stability of numerical solutions}\label{a-ss2}

 The conditions  of Lemma \ref{th4.1} provide  us an opportunity to construct a more precise scheme approximating the underlying stability.   Recall that for an integer $1/\delta_4\in [4, +\infty)$, $\bar{\mathcal{V}}^{4}_{\delta_4}$  has been defined by \eqref{D**}.
 If   a Lyapunov function $V(\cdot) \in   \bar{\mathcal{V}}^{4}_{\delta_{4}}$ satisfying \eqref{w1} is found, the almost surely asymptotic property of the solution process follows from Lemma \ref{th4.1}.
 In order to approximate this property in this subsection, assume
\begin{align}\la{CA1}
 \sup_{0<|x|\leq u} \frac{ |f(x)|^2\vee |g(x)|^2 }{\Lambda_{\rho}(x)V^{ {2}{\delta_4}}(x)}< +\infty
 \end{align}
for any $u>0$, where $\Lambda_{\rho}(x):=1\wedge\big({w(x)}/{V^{\rho}(x)}\big)$. Under the above assumption we estimate the growth rate of $f$ and $g$.
Choose
a strictly increasing continuous function
$\bar{\f}: \RR_+\rightarrow \RR_+$ with  $\bar{\f}(u)\rightarrow \infty$
as $u\rightarrow \infty$ such that
 \begin{align}\la{ee21}
 \sup_{0<|x|\leq u}    \bigg[ \frac{ |f( x)|}{\Lambda_{\rho}^{1/2}(x) V^{\delta_4}(x)}
 \vee  \frac{ |g(x)|^2}{\Lambda_{\rho}(x)V^{2\delta_{4}}(x)} \bigg]\leq \bar{\f}(u), \qquad\forall~ u\geq1.
 \end{align}
 Due to \eqref{CA1},   $\bar{\f}$  is well defined as well as its inverse function $\bar{\f}^{-1}: [\bar{\f}(1),\infty)\rightarrow \RR_+$. Then choose   a pair of positive
constants $\t^*\in (0,1)$ and  $K$ such that
$
 K(\t^{*})^{-\bar{\theta}}\geq \bar{\f}(|x_0|\vee 1)
$
  holds for some   $0<\bar{\theta}<1/2$, where  $K$ is a positive constant  independent of  $\t$.
 For  the given $\t\in (0, \t^*]$,   define a truncation mapping $\bar{\pi}_{\t}:\RR^d\ra \RR^d　$ by
$
\bar{\pi}_\t(x)= \big(|x|\wedge
\bar{\f}^{-1} (K\t^{-\bar{\theta}} )\big) \frac{x}{|x|},
$
where  let $\frac{x}{|x|}=\mathbf{0}$ as $x=\mathbf{0}\in \mathbb{R}^d$.
%Obviously, for any $x\in \RR^d$,
% \begin{align*}%\la{e23}
% &|f(\bar{\pi}_{\t}(x))|^2
%\leq   K^2\t^{-2\bar{\theta}}    \Lambda_{\rho}(\bar{\pi}_{\t}(x)) V^{ 2 \delta_4} (\bar{\pi}_{\t}(x)) ,\nn\\
%% \end{align*}
%% and
%% \begin{align*}%\la{e23}
%&|g(\bar{\pi}_{\t}(x))|^2
%\leq   K\t^{-\bar{\theta}}  \Lambda_{\rho}(\bar{\pi}_{\t}(x)) V^{ 2 \delta_4} (\bar{\pi}_{\t}(x)).
% \end{align*}
 Next,
for any given stepsize $\t\in (0, \t^*]$, define the $V$-truncated EM  scheme by
 \begin{align}\la{YY_0}
\left\{
\begin{array}{ll}
Z_0=x_0,&\\
\tilde{Z}_{k+1}=Z_k+f(Z_k)\t +g(Z_k)\t B_k,& \\
Z_{k+1}=\bar{\pi}_\t(\tilde{Z}_{k+1}),&
\end{array}
\right.
\end{align}
for any integer $k\geq 0$, where $t_k=k \t$ and $\t B_k=B(t_{k+1})-B(t_{k})$. To obtain the continuous-time approximation, we define $Z(t)$ by $Z(t):=Z_k$ for any $t\in[t_k,  t_{k+1})$. Then the drift and diffusion terms have the
    property
\begin{align}\la{2Y_01}
%\begin{split}
  |f (Z_k) |^2  \leq   K^2\t^{-2\bar{\theta}}
  \Lambda_{\rho}(Z_k)V^{ 2 \delta_4}(Z_k) ,\quad   |g(Z_k)|^2 \leq  K\t^{-\bar{\theta}} \Lambda_{\rho}(Z_k)
  V^{ 2 \delta_4} (Z_k),
%\end{split}
\end{align}
which implies  (\ref{Y_01}) holds.   Thus,   the conclusion of Theorem \ref{T:C_2} still holds for the new   scheme \eqref{YY_0} under the conditions of Lemma \ref{th4.1}. %,  admitting a large class of Lyapunov functions $V(\cdot)$.
 Next we prepare the   discrete version of the nonnegative semimartingale convergence  theorem (see, e.g. \cite[Theorem 7, p.139]{sh9} or \cite[Lemma 3.7]{Lukasz17}).
\begin{lemma}\la{LeY1}%[\!\!\cite{sh9,Mao08}]
Consider a non-negative stochastic process $\{V_k\}$ with representation
$$
V_k=V_0+A_k-U_k+\mathcal{M}_k,
$$
where $\{A_k\}$ and $\{U_k\}$ are almost surely non-decreasing, predictable processes with $A_0=U_0=0$, and $\mathcal{M}_k$ is a local martingale adapted to
$\mathcal{F}_{t_{k}}$  with $\mathcal{M}_0=0$. Then
$$
\left\{\lim_{k\rightarrow \infty}A_k<\infty\right\}\subset\left\{\lim_{k\rightarrow \infty}U_k <\infty \right\}\cap\left\{ \lim_{k\rightarrow \infty}V_k<\infty~exists\right\}\quad a.s.
$$
  \end{lemma}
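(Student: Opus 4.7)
The plan is to read the identity $V_k = V_0 + A_k - U_k + \mathcal{M}_k$ as a Doob-type decomposition and, after a predictable localisation that tames $A_k$, to apply the discrete-time non-negative supermartingale convergence theorem. Set $\Omega^* := \{\lim_k A_k < \infty\}$; on this set we need to produce finite a.s.\ limits for both $U_k$ and $V_k$. The trouble is that $\mathcal{M}_k$ is only a local martingale and $A_k$ is merely a.s.\ finite (not uniformly bounded) on $\Omega^*$, so neither $V_k$ nor $\mathcal{M}_k$ can be handled directly by a global convergence theorem.

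First I would exploit the predictability of $\{A_k\}$ by introducing, for each integer $n \geq 1$, the cutoff
$$\tau_n := \inf\{k \geq 0 : A_{k+1} > n\}.$$
Since $A_{k+1}$ is $\mathcal{F}_{t_k}$-measurable, each $\tau_n$ is an $\mathcal{F}_{t_k}$-stopping time, and by construction $A_{k \wedge \tau_n} \leq n$ for every $k$. On $\Omega^*$, the limit $A_\infty := \lim_k A_k$ is finite, so $\tau_n = \infty$ for every $n > A_\infty(\omega)$; it therefore suffices to prove the conclusion on each event $\{\tau_n = \infty\}$ separately and then let $n \to \infty$.

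The crux of the argument is then the lower bound, using $V_{k \wedge \tau_n} \geq 0$ and $U_{k \wedge \tau_n} \geq 0$,
$$\mathcal{M}_{k \wedge \tau_n} = V_{k \wedge \tau_n} - V_0 - A_{k \wedge \tau_n} + U_{k \wedge \tau_n} \geq -V_0 - n,$$
so that the shifted process $N^{(n)}_k := \mathcal{M}_{k \wedge \tau_n} + V_0 + n$ is a non-negative local martingale. In discrete time, a non-negative local martingale is automatically a supermartingale: if $\{\sigma_m\}$ localises $\mathcal{M}$, then $N^{(n)}_{k \wedge \sigma_m}$ is a true martingale, and conditional Fatou applied as $m \to \infty$ yields $\mathbb{E}[N^{(n)}_{k+1} \mid \mathcal{F}_{t_k}] \leq N^{(n)}_k$. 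Doob's supermartingale convergence theorem then provides the a.s.\ finite limit $\lim_{k \to \infty} \mathcal{M}_{k \wedge \tau_n}$.

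Finally I would assemble the pieces on $\Omega^* \cap \{\tau_n = \infty\}$: there $A_k \to A_\infty \in [0,\infty)$ and $\mathcal{M}_k$ converges to a finite limit by the previous step, so the identity $U_k = V_0 + A_k + \mathcal{M}_k - V_k$ combined with $V_k \geq 0$ shows that $U_k$ is bounded above; being non-decreasing, it converges, and then $V_k$ converges as well. Since $\Omega^* = \bigcup_n \bigl(\Omega^* \cap \{\tau_n = \infty\}\bigr)$, the conclusion holds on all of $\Omega^*$. The step I expect to be most delicate is the passage from the local-martingale property of $\mathcal{M}$ to the supermartingale property of the shifted stopped process $N^{(n)}$: this requires a careful double localisation (stopping first at $\tau_n$, then at the localising sequence of $\mathcal{M}$) and a Fatou argument, rather than the tempting but unjustified assertion that $\mathcal{M}_{\cdot \wedge \tau_n}$ is already a true martingale. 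Once this is in place the remaining bookkeeping is routine.
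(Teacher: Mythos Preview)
Your argument is correct and is the standard localisation-plus-Fatou proof of the discrete nonnegative semimartingale convergence theorem. The paper, however, does not prove this lemma at all: it is stated as a preparatory tool and attributed to Liptser--Shiryayev \cite[Theorem~7, p.~139]{sh9} and Szpruch--Zhang \cite[Lemma~3.7]{Lukasz17}, so there is nothing in the paper to compare your proof against beyond noting that you have supplied what the authors chose to quote.
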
%~~exists~and~is~finite

 The following  lemmas will play an important role  in the  proof  of the asymptotic stability of the numerical solutions.
\begin{lemma}\la{Le5.3}
If  $V\in  \mathcal{\bar{V}}^{4}_{\delta_4}$ for some integer
  $1/\delta_4\in  [4, +\infty)$, we have
\begin{align*} %\la{EM0}
 V(\tilde{Z}_{k+1})
\leq& V(Z_{k})+\mathcal{L}V(Z_{k})\t
+C\Lambda_{\rho}(Z_k)  V(Z_k)  \t^{2(1-\bar{\theta})}\nn\\
& +\sum_{i=1}^{3}\mathcal{S}_{i}^{\t}V(Z_k)  + \tilde{\mathcal{J}}^{\t}V(Z_k),
\end{align*}
and $\mathbb{E}_{k}\big[\mathcal{S}_i^{\t}V(Z_k)\big]=0,$
\begin{align}\la{EM_0}
|\tilde{\mathcal{J}}^{\t}V(Z_k)|
\leq C \Lambda_{\rho}(Z_k)V(Z_k) \t^{2(1-\bar{\theta})}\! +\mathcal{A}_1^{\t}V(Z_k),\qquad   \mathbb{E}_{k}\big[\mathcal{A}_1^{\t}V(Z_k)\big]=0,
\end{align}
where $\tilde{\mathcal{J}}^{\t}V(\cdot) $ and $\mathcal{A}_1^{\t}V(\cdot)$ are defined by \eqref{EM_1} and \eqref{EM_2}.
 Moreover, we also have
\begin{align*}%\la{lyhf2}
|\mathcal{S}_{1}^{\t}V(Z_k)|^2 =| D^{(1)}V(Z_k)  g(  Z_k)|^2\t+\mathcal{H}_{1,1}^{\t}V(Z_k),
\qquad   \mathbb{E}_k\big[\mathcal{H}_{1,1}^{\t}V(Z_k)\big]=0,~~~
\end{align*}
and for   $i=1, 2, 3$,
\begin{align*}%\la{lyhf2}
|\mathcal{S}_{i}^{\t}V(Z_k)|^2 \leq C \Lambda_{\rho}(Z_k)V^2(Z_k) \t^{1-\bar{\theta}}+\mathcal{H}_{i,i}^{\t}V(Z_k),\qquad   \mathbb{E}_k\big[\mathcal{H}_{i,i}^{\t}V(Z_k)\big]=0,
\end{align*}
where $\mathcal{H}_{i,i}^{\t}V(\cdot)$ is defined by \eqref{H_11}, \eqref{H_22} and \eqref{H_33}, respectively, and we also have
\begin{align*}%\la{lyhf2}
  \mathcal{S}_{1}^{\t}V(Z_k)\mathcal{S}_{j}^{\t}V(Z_k) \geq- C  \Lambda_{\rho}(Z_k)V^2(Z_k) \t^{2(1-\bar{\theta})}+ \mathcal{H}_{1,j}^{\t}V(Z_k),
\end{align*}
and $\mathbb{E}_k\big[\mathcal{H}_{1,j}^{\t}V(Z_k)\big]=0$ for $j=2, 3$, where  $\mathcal{H}_{1,j}^{\t}V(\cdot)$ are defined by \eqref{H_12} and \eqref{H_13}, respectively.
  \end{lemma}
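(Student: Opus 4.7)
The plan is to mimic the arguments of Lemma \ref{le1.3} and Lemma \ref{Z:1}, replacing every application of the growth bound \eqref{Y_01} by the sharper bound \eqref{2Y_01}, and then explicitly decomposing each random quantity into its $\mathcal{F}_{t_k}$-conditional mean plus a centered remainder. This centering is what the discrete semimartingale convergence theorem (Lemma \ref{LeY1}) requires later, and it is the main new ingredient compared with the corresponding estimates in Section \ref{3s-c}.

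First I would write the Taylor expansion with integral remainder,
\begin{align*}
V(\tilde{Z}_{k+1}) = V(Z_k)+\sum_{|\alpha|=1}^{3}\frac{D^{\alpha}V(Z_k)}{\alpha!}\bigl(\tilde{Z}_{k+1}-Z_k\bigr)^{\alpha}+J(\tilde{Z}_{k+1},Z_k),
\end{align*}
exactly as in \eqref{2*}, and apply Lemma \ref{le1.3} (now with $Y_k$ replaced by $Z_k$) to rewrite the polynomial part as $\mathcal{L}V(Z_k)\Delta+\mathcal{R}^{\Delta}V(Z_k)+\sum_{i=1}^{3}\mathcal{S}_{i}^{\Delta}V(Z_k)$, with $\mathbb{E}_k[\mathcal{S}_i^{\Delta}V(Z_k)]=0$ immediate from the Gaussian moments of $\Delta B_k$. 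Because $V\in\bar{\mathcal{V}}^{4}_{\delta_4}$ we have $|D^{(n)}V(x)|\le cV^{1-n\delta_4}(x)$ (no additive $1$), so the estimate of $\mathcal{R}^{\Delta}V$ proceeds term by term: each summand $|f(Z_k)|^{i-2j}|g(Z_k)|^{2j}|D^{(i)}V(Z_k)|\Delta^{i-j}$ is bounded using \eqref{2Y_01}, which supplies one factor of $\Lambda_{\rho}(Z_k)^{1/2}$ for every $|f|$ and one full factor of $\Lambda_{\rho}(Z_k)$ for every $|g|^2$. Since $\Lambda_{\rho}\le 1$, we can always reduce the power of $\Lambda_{\rho}$ to one, which gives $\mathcal{R}^{\Delta}V(Z_k)\le C\Lambda_{\rho}(Z_k)V(Z_k)\Delta^{2(1-\bar{\theta})}$.

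Next I would treat the fourth-order remainder $J(\tilde{Z}_{k+1},Z_k)$ as in \eqref{EQ1}, using the Hutzenthaler--Jentzen inequality $1+V(x+y)\le C(1+V(x)+|y|^{1/\delta_4})$ followed by \eqref{2Y_01}. The extra $\Lambda_{\rho}(Z_k)$ factor now rides through every deterministic power of $|f|$ and $|g|^2$, and only the Brownian-increment factors $|\Delta B_k|^{4}$ and $|\Delta B_k|^{1/\delta_4}$ remain random. I split this random bound as
\[
|J(\tilde{Z}_{k+1},Z_k)|\le C\Lambda_{\rho}(Z_k)V(Z_k)\Delta^{2(1-\bar{\theta})}+\mathcal{A}_1^{\Delta}V(Z_k),
\]
where $\mathcal{A}_1^{\Delta}V(Z_k)$ is precisely the corresponding $|\Delta B_k|$-term with its conditional mean subtracted; since all mixed Gaussian moments are computable in closed form, $\mathbb{E}_k[\mathcal{A}_1^{\Delta}V(Z_k)]=0$ holds by construction. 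The combined remainder $\tilde{\mathcal{J}}^{\Delta}V:=\mathcal{R}^{\Delta}V+J$ then satisfies \eqref{EM_0}.

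For the square and cross-term estimates of $\mathcal{S}_i^{\Delta}V(Z_k)$, I repeat the bookkeeping in the proof of Lemma \ref{Z:1}, but without taking conditional expectations at the end: I expand $|\mathcal{S}_i^{\Delta}V(Z_k)|^2$ and $\mathcal{S}_1^{\Delta}V(Z_k)\mathcal{S}_j^{\Delta}V(Z_k)$ as polynomials in the Gaussian increments, apply \eqref{2Y_01} coefficientwise to pull out $\Lambda_{\rho}(Z_k)V^2(Z_k)$, and then define $\mathcal{H}_{i,i}^{\Delta}V(Z_k)$ and $\mathcal{H}_{1,j}^{\Delta}V(Z_k)$ as the differences between the expanded expressions and their $\mathcal{F}_{t_k}$-conditional means (the latter are already the deterministic bounds of Lemma \ref{Z:1}, up to the improved $\Lambda_\rho$ factor). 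The identity $|\mathcal{S}_1^{\Delta}V(Z_k)|^2=|D^{(1)}V(Z_k)g(Z_k)|^2\Delta+\mathcal{H}_{1,1}^{\Delta}V(Z_k)$ is then exact, and the cross-term lower bound follows from elementary Young's inequality applied to the monomials in $\Delta B_k$ before taking the conditional mean.

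The routine but delicate part of the argument will be the fourth-order remainder $J$: one must verify that, after the $|y|^{1/\delta_4}$ term from $1+V(x+y)$ is absorbed, every surviving monomial in $|f|$ and $|g|$ still carries at least one full power of $\Lambda_{\rho}(Z_k)$ so that the deterministic component of the bound \eqref{EM_0} retains this factor; this is exactly the point where the choice $\bar{\theta}<1/2$ and the structure \eqref{2Y_01} (with $|f|^2$ carrying $\Lambda_\rho$ rather than $|f|$) become essential. Everything else is algebra: expansion, grouping monomials by Gaussian order, and invoking $\Lambda_\rho\le 1$ to reduce higher powers to a single factor.
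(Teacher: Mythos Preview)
Your approach is essentially the paper's and is correct in substance. Two small corrections are worth flagging. First, $\tilde{\mathcal{J}}^{\Delta}V(Z_k)$ is not $\mathcal{R}^{\Delta}V+J$ but the explicit nonnegative Brownian-moment expression $C\Lambda_{\rho}(Z_k)V(Z_k)\bigl[\Delta^{-2\bar\theta}|\Delta B_k|^{4}+\Delta^{-\bar\theta/(2\delta_4)}|\Delta B_k|^{1/\delta_4}\bigr]$ that dominates $|J|$ up to a deterministic $\Delta^{4(1-\bar\theta)}$ piece; the entire $\mathcal{R}^{\Delta}V(Z_k)$ is absorbed directly into the deterministic $C\Lambda_\rho V\,\Delta^{2(1-\bar\theta)}$ term of the main inequality, not into $\tilde{\mathcal{J}}^{\Delta}V$. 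Second, for $V\in\bar{\mathcal{V}}^{4}_{\delta_4}$ the Hutzenthaler--Jentzen growth bound is $V(x+y)\le C\bigl(V(x)+|y|^{1/\delta_4}\bigr)$ without the additive $1$; you recorded this correctly for the derivative bounds but quoted the $\mathcal{V}^{4}_{\delta_4}$ version for the integral remainder, and using the latter would leave you with $(1+V(Z_k))$ factors that cannot be divided out by $V(Z_k)$ in the subsequent $\zeta_k$ estimates of Theorem~\ref{th4.2}.
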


\begin{lemma}\la{Le5.4}
  If  $V\in  \mathcal{\bar{V}}^{4}_{\delta_4}$ for some integer $1/\delta_4\in [4, +\infty)$, we have
\begin{align*}
|\tilde{\mathcal{J}}^{\t}V(Z_k)|^2
 \leq C \Lambda_{\rho}(Z_k)  V^2(Y_k)   \t^{4(1-\bar{\theta})}+\mathcal{A}_2^{\t}V(Z_k),\qquad  \mathbb{E}_{k}\big[\mathcal{A}_2^{\t}V(Z_k)\big]=0,
  \end{align*}
and
\begin{align*}%\la{EQ1}
|\tilde{\mathcal{J}}^{\t}V(Z_k)|^3
 \leq C \Lambda_{\rho}(Z_k) V^3(Z_k)  \t^{6(1-\bar{\theta})}+\mathcal{A}_3^{\t}V(Z_k),\qquad  \mathbb{E}_{k}\big[\mathcal{A}_3^{\t}V(Z_k)\big]=0,
\end{align*}
where $\mathcal{A}_2^{\t}V(\cdot)$ and $\mathcal{A}_3^{\t}V(\cdot)$ are defined by \eqref{M2} and \eqref{M3}.
 Moreover, we also have
\begin{align*}%\la{lyhf2}
\big|\mathcal{S}_{1}^{\t}V(Z_k)\big|^2|\tilde{\mathcal{J}}^{\t}V(Z_k)|
\leq  C \Lambda_{\rho}(Z_k)V^3(Z_k)  \t^{3(1-\bar{\theta})} +\mathcal{A}_4^{\t}V(Z_k)
,
\end{align*}
\begin{align*}%\la{lyhf2}
\big|\mathcal{S}_{2}^{\t}V(Z_k)\big|^2|\tilde{\mathcal{J}}^{\t}V(Z_k)|
\leq C \Lambda_{\rho}(Z_k)V^3(Z_k) \t^{4(1-\bar{\theta})}+\mathcal{A}_5^{\t}V(Z_k)
,
\end{align*}
and $\mathbb{E}_k\big[\mathcal{A}_4^{\t}V(Z_k)\big]=0,$ $ \mathbb{E}_k\big[\mathcal{A}_5^{\t}V(Z_k)\big]=0,$
\begin{align*}%\la{lyhf2}
\big|\mathcal{S}_{3}^{\t}V(Z_k)\big|^2|\tilde{\mathcal{J}}^{\t}V(Z_k)|
\leq C \Lambda_{\rho}(Z_k)V^3(Z_k) \t^{5(1-\bar{\theta})}+\mathcal{A}_6^{\t}V(Z_k)
,
\end{align*}
and $\mathbb{E}_k\big[\mathcal{A}_6^{\t}V(Z_k)\big]=0,$ where $\mathcal{A}_4^{\t}V(\cdot)$, $\mathcal{A}_5^{\t}V(\cdot)$  and $\mathcal{A}_6^{\t}V(\cdot)$ are defined by \eqref{M4}, \eqref{M5} and \eqref{M6}, respectively.
  \end{lemma}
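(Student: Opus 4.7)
My plan is to deduce Lemma \ref{Le5.4} from Lemma \ref{Le5.3} by the following recipe: write each of the six quantities in the form \emph{(deterministic bound)} $+$ \emph{(centered remainder)}, where the centered remainder, once explicitly defined, will play the role of $\mathcal{A}_j^{\t}V(Z_k)$ (matching the displays \eqref{M2}--\eqref{M6}). Lemma \ref{Le5.3} already yields
\begin{align*}
|\tilde{\mathcal{J}}^{\t}V(Z_k)|\leq C\Lambda_{\rho}(Z_k)V(Z_k)\t^{2(1-\bar{\theta})}+\mathcal{A}_{1}^{\t}V(Z_k),
\end{align*}
together with analogous representations of $|\mathcal{S}_i^{\t}V(Z_k)|^2$. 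So the estimates to be established are essentially algebraic consequences, provided one is careful about which pieces have zero conditional expectation.

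For $|\tilde{\mathcal{J}}^{\t}V(Z_k)|^2$ I would first apply $(a+b)^2\le 2a^2+2b^2$ to get a purely deterministic term bounded by $C\Lambda_{\rho}^{2}(Z_k)V^{2}(Z_k)\t^{4(1-\bar{\theta})}$, which I then dominate by $C\Lambda_{\rho}(Z_k)V^{2}(Z_k)\t^{4(1-\bar{\theta})}$ since $\Lambda_{\rho}\le 1$ by definition. The remaining piece is $2|\mathcal{A}_{1}^{\t}V(Z_k)|^{2}$; decompose it as $2\E_k[|\mathcal{A}_{1}^{\t}V(Z_k)|^{2}]+2\bigl(|\mathcal{A}_{1}^{\t}V(Z_k)|^{2}-\E_k[|\mathcal{A}_{1}^{\t}V(Z_k)|^{2}]\bigr)$, and set $\mathcal{A}_{2}^{\t}V(Z_k)$ to be this second, centered piece (which automatically has $\E_k[\mathcal{A}_{2}]=0$ and matches \eqref{M2}). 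To bound the conditional expectation I would expand $\mathcal{A}_{1}^{\t}V(Z_k)$ via its explicit form in \eqref{EM_2}, control each factor using $|D^{(n)}V|\le cV^{1-n\delta_{4}}$ ($V\in\bar{\mathcal{V}}^{4}_{\delta_{4}}$), substitute the sharp size estimates $|f(Z_k)|^{2}\le K^{2}\t^{-2\bar{\theta}}\Lambda_{\rho}(Z_k)V^{2\delta_{4}}(Z_k)$ and $|g(Z_k)|^{2}\le K\t^{-\bar{\theta}}\Lambda_{\rho}(Z_k)V^{2\delta_{4}}(Z_k)$ from \eqref{2Y_01}, and compute the Gaussian moments $\E_k|\t B_k|^{2r}$ via \eqref{E_k}. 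Tracking the powers of $\delta_{4}$ so that $V^{\delta_{4}}$-factors from $f,g$ absorb the $V^{1-n\delta_{4}}$-factors from $D^{(n)}V$ yields a total exponent of $V$ equal to $2$, and the accumulated powers of $\t$ and $\t^{-\bar{\theta}}$ combine to $\t^{4(1-\bar{\theta})}$, as required. The cubic estimate $|\tilde{\mathcal{J}}^{\t}V(Z_k)|^{3}$ is handled identically via $(a+b)^{3}\le 4(a^{3}+b^{3})$, the splitting $|\mathcal{A}_1|^{3}=\E_k[|\mathcal{A}_1|^{3}]+(|\mathcal{A}_1|^{3}-\E_k[|\mathcal{A}_1|^{3}])$, and six-th order Brownian moments; the centered piece defines $\mathcal{A}_{3}^{\t}V(Z_k)$ per \eqref{M3}.

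For the three product estimates, I would multiply the Lemma \ref{Le5.3} decompositions
\begin{align*}
|\mathcal{S}_i^{\t}V(Z_k)|^2 \leq C\Lambda_{\rho}(Z_k)V^2(Z_k)\t^{1-\bar{\theta}}+\mathcal{H}_{i,i}^{\t}V(Z_k),\qquad |\tilde{\mathcal{J}}^{\t}V(Z_k)|\le C\Lambda_{\rho}(Z_k)V(Z_k)\t^{2(1-\bar{\theta})}+\mathcal{A}_{1}^{\t}V(Z_k),
\end{align*}
and expand into four terms. The purely deterministic product gives $C\Lambda_{\rho}^{2}(Z_k)V^{3}(Z_k)\t^{3(1-\bar{\theta})}\le C\Lambda_{\rho}(Z_k)V^{3}(Z_k)\t^{3(1-\bar{\theta})}$, which is the leading order for $i=1$; for $i=2,3$ the analogous deterministic pieces sharpen to $\t^{4(1-\bar{\theta})}$ and $\t^{5(1-\bar{\theta})}$ respectively because the deterministic bounds on $\E_k[|\mathcal{S}_i|^{2}]$ carry extra powers of $\t$ (as recorded in Lemma \ref{Z:1} and Lemma \ref{Le5.3}). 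Each of the three cross products is of the form $(\text{deterministic})\cdot(\text{centered})$ or $(\text{centered})\cdot(\text{centered})$; for the mixed ones I again write each factor as mean-plus-centered and for the two centered-times-centered terms I use the usual split $XY=\E_k[XY]+(XY-\E_k[XY])$. Bounding $\E_k[XY]$ by Cauchy--Schwarz together with the second-moment bounds from Lemma \ref{Le5.3} and the conditional-moment estimate just derived for $|\tilde{\mathcal{J}}|^{2}$ produces the stated deterministic upper bound, and the aggregate centered piece is exactly $\mathcal{A}_{j}^{\t}V(Z_k)$, $j=4,5,6$, as in \eqref{M4}--\eqref{M6}, with $\E_k[\mathcal{A}_{j}]=0$ by construction.

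The main obstacle I expect is careful bookkeeping rather than any genuinely new estimate: one must trace, for every cross term, that the various $\delta_{4}$-exponents arising from $|D^{(n)}V|\le cV^{1-n\delta_{4}}$ combine with the $V^{\delta_{4}}$ factors in $|f|,|g|$ so the total power of $V$ on the right-hand side comes out to exactly $2$ (for the squared case) or $3$ (for the cubic and product cases), and that the accumulated negative powers of $\t$ from $\t^{-\bar{\theta}}$ are always overcome by positive powers from Brownian moments $\E_k|\t B_k|^{2r}\asymp\t^{r}$ — the constraint $\bar{\theta}<1/2$ is what makes every resulting exponent of $\t$ positive. A minor further check is that $\Lambda_{\rho}^{k}(Z_k)\le\Lambda_{\rho}(Z_k)$ for $k\ge 1$ is used consistently so that the right-hand sides carry $\Lambda_{\rho}$ linearly; this is automatic because $\Lambda_{\rho}=1\wedge(w/V^{\rho})\in[0,1]$.
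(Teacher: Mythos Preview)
Your strategy is sound and would prove a lemma of the stated form, but it diverges from the paper's argument in one important way and has a minor gap. The paper does \emph{not} route through the Lemma~\ref{Le5.3} decomposition $|\tilde{\mathcal{J}}^{\t}V(Z_k)|\le (\text{det})+\mathcal{A}_1^{\t}V(Z_k)$ at all; instead it works directly from the closed form (A.14), namely $\tilde{\mathcal{J}}^{\t}V(Z_k)=C\Lambda_\rho(Z_k)V(Z_k)\bigl[\t^{-2\bar\theta}|\t B_k|^{4}+\t^{-\bar\theta/(2\delta_4)}|\t B_k|^{1/\delta_4}\bigr]$, squares or cubes this polynomial in $|\t B_k|$, and then centers each resulting monomial via $|\t B_k|^{r}=K_r\t^{r/2}+(|\t B_k|^{r}-K_r\t^{r/2})$. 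This yields the specific formulas \eqref{M2}--\eqref{M6}. Your construction---center $|\mathcal{A}_1|^{2}$, $|\mathcal{A}_1|^{3}$, and the various cross products---produces \emph{different} centered remainders (e.g.\ your $\mathcal{A}_2$ is $2(|\mathcal{A}_1|^{2}-\E_k|\mathcal{A}_1|^{2})$, which after expanding \eqref{EM_2} contains extra cross terms absent from \eqref{M2}). So your claim that the outcome ``matches \eqref{M2}'' is not correct; what you get is functionally equivalent but not the same object. The paper's route is shorter precisely because $\tilde{\mathcal{J}}$ is already an explicit polynomial in $|\t B_k|$, so the intermediate split via $\mathcal{A}_1$ is a detour. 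The genuine gap in your outline is the sharpening step for $i=2,3$: Lemma~\ref{Le5.3} only records the uniform pathwise bound $|\mathcal{S}_i|^{2}\le C\Lambda_\rho V^{2}\t^{1-\bar\theta}+\mathcal{H}_{i,i}$, so multiplying by $|\tilde{\mathcal{J}}|\le C\Lambda_\rho V\t^{2(1-\bar\theta)}+\mathcal{A}_1$ gives only $\t^{3(1-\bar\theta)}$ for every $i$. To recover $\t^{4(1-\bar\theta)}$ and $\t^{5(1-\bar\theta)}$ you must go back to the finer pathwise expansions behind \eqref{H_22} and \eqref{H_33} (transferred to the $Z_k$-scheme using \eqref{2Y_01}), which give deterministic pieces of order $\t^{2(1-\bar\theta)}$ and $\t^{3(1-\bar\theta)}$ respectively---and this is exactly what the paper does, multiplying those refined expansions directly against the explicit (A.14).
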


The proofs of both above lemmas  can be found in  Appendix A.
  Now we analyze the asymptotic properties of the numerical solutions of the  $V$-truncated EM scheme \eqref{YY_0}. It makes use of Lemma \ref{Le5.3} and Lemma \ref{Le5.4} above.

\begin{theorem}\la{th4.2}    Let the conditions of Lemma \ref{th4.1} hold. Assume also that the function  $V\in   \bar{\mathcal{V}}^{4}_{\delta_4}$ for some integer
 $1/\delta_4\in [4, +\infty)$ with  the property
$$
 V(\epsilon x)\leq V(x)   \qquad\forall\, x \in \mathbb{R}^d,
 ~ 0<\epsilon\leq 1.$$
 Then
   there is a constant $\t^{**}\in (0, \t^*]$ such that for any $\t\in(0,\t^{**}]$ the corresponding $V$-truncated EM scheme   \eqref{YY_0}  has the property that
\begin{align}\la{2YHF_01}
\limsup_{k\rightarrow\infty} V^{\rho}(Z_k)<\infty,\qquad  \lim_{k\rightarrow \infty}d(Z_k,\mathrm{Ker}(w))=0 \qquad\mathrm{a.s.}
\end{align}
Moreover, if $w(x)=0$   iff $x=\mathbf{0}\in \mathbb{R}^d$, then for any $\t\in(0,\t^{**}]$,
\begin{align*} %\la{Y_9}
\lim_{k\rightarrow\infty} Z_k=\mathbf{0} \ \ \mathrm{a.s.}
\end{align*}
\end{theorem}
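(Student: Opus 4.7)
The plan is to mirror the moment-estimation argument used in Theorem \ref{co3.1}, but with two essential refinements: (a) I will work with the sharper Taylor-expansion bounds of Lemmas \ref{Le5.3} and \ref{Le5.4}, in which every deterministic remainder carries an extra factor of $\Lambda_\rho(Z_k)$; and (b) on the infinite time horizon I will extract a genuine negative per-step drift $-w(Z_k)\t$ and invoke the discrete nonnegative semimartingale convergence theorem (Lemma \ref{LeY1}).

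First, note that if $Z_k=\mathbf{0}$ then $f(Z_k)=g(Z_k)=\mathbf{0}$ and the scheme is frozen at the origin, so it suffices to argue on $\{Z_k\ne\mathbf{0}\}$, where $V(Z_k)>0$. On this event write $V^\rho(\tilde Z_{k+1})=V^\rho(Z_k)(1+\xi_k)^\rho$ with $\xi_k:=(V(\tilde Z_{k+1})-V(Z_k))/V(Z_k)>-1$. Taylor-expanding $(1+\xi_k)^\rho$ through third order, taking $\mathbb{E}_k[\cdot]$, and invoking the zero-mean identities for $\mathcal{S}_i^\t V(Z_k)$, $\mathcal{A}_1^\t V(Z_k)$, $\mathcal{H}_{i,j}^\t V(Z_k)$, and $\mathcal{A}_j^\t V(Z_k)$ together with the moment bounds carrying the $\Lambda_\rho(Z_k)$ factor in Lemmas \ref{Le5.3} and \ref{Le5.4}, the dominant deterministic terms collapse (exactly as in the derivation of \eqref{vys1}--\eqref{vys4}) to $\mathcal{L}V^\rho(Z_k)\t$, and the remainder takes the clean form $C\Lambda_\rho(Z_k)V^\rho(Z_k)\t^{1+\varepsilon}$:
\begin{align*}
\mathbb{E}_k\!\bigl[V^\rho(\tilde Z_{k+1})\bigr]-V^\rho(Z_k)\le \mathcal{L}V^\rho(Z_k)\t + C\Lambda_\rho(Z_k)V^\rho(Z_k)\t^{1+\varepsilon}
\end{align*}
for some $\varepsilon>0$ depending only on $\bar\theta$ and $\delta_4$. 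Using $\mathcal{L}V^\rho(x)\le -w(x)$ from \eqref{w1} together with the pointwise inequality $\Lambda_\rho(x)V^\rho(x)\le w(x)$ built into the definition of $\Lambda_\rho$, this reduces to
\begin{align*}
\mathbb{E}_k\!\bigl[V^\rho(\tilde Z_{k+1})\bigr]-V^\rho(Z_k)\le -(1-C\t^{\varepsilon})\,w(Z_k)\,\t.
\end{align*}
Choosing $\t^{**}\in(0,\t^*]$ so that $C(\t^{**})^{\varepsilon}\le 1/2$, and using $V(\bar\pi_\t x)\le V(x)$ to pass from $\tilde Z_{k+1}$ to $Z_{k+1}$, I obtain the key one-step supermartingale-type estimate
\begin{align*}
\mathbb{E}_k\!\bigl[V^\rho(Z_{k+1})\bigr]-V^\rho(Z_k)\le -\tfrac{1}{2}w(Z_k)\,\t\le 0,\qquad \t\in(0,\t^{**}].
\end{align*}

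Next I perform a Doob-type decomposition. Setting $a_k:=\mathbb{E}_k[V^\rho(Z_{k+1})]-V^\rho(Z_k)\le 0$ and $\mathcal{M}_k:=\sum_{j=0}^{k-1}\bigl(V^\rho(Z_{j+1})-\mathbb{E}_j[V^\rho(Z_{j+1})]\bigr)$, a local martingale with $\mathcal{M}_0=0$, one has
\begin{align*}
V^\rho(Z_k)=V^\rho(Z_0)+0-U_k+\mathcal{M}_k,\qquad U_k:=-\sum_{j=0}^{k-1}a_j\ge 0,
\end{align*}
with $U_k$ non-decreasing and predictable. Applying Lemma \ref{LeY1} with $A_k\equiv 0$ then yields, almost surely, that $\lim_k U_k<\infty$ and that $\lim_k V^\rho(Z_k)$ exists and is finite. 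In particular $\limsup_k V^\rho(Z_k)<\infty$ a.s., which is the first half of \eqref{2YHF_01}; moreover $U_k\ge\tfrac{\t}{2}\sum_{j<k}w(Z_j)$ forces $\sum_{j=0}^{\infty} w(Z_j)<\infty$ and hence $w(Z_k)\to 0$ a.s.

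Finally, because $V\in\mathcal{C}_\infty^{2}$, the almost sure boundedness of $V^\rho(Z_k)$ confines $\{Z_k(\omega)\}$ to a random compact set $\mathbb{K}(\omega)\subset\mathbb{R}^d$. A standard compactness/continuity argument then gives $d(Z_k,\mathrm{Ker}(w))\to 0$ a.s.: any limit point $z^*$ of $\{Z_k(\omega)\}$ must satisfy $w(z^*)=0$ by continuity of $w$ and $w(Z_k)\to 0$, so $z^*\in\mathrm{Ker}(w)$, and boundedness in $\mathbb{K}(\omega)$ upgrades this to convergence of the distance. When additionally $\mathrm{Ker}(w)=\{\mathbf{0}\}$, this reads exactly $Z_k\to\mathbf{0}$ a.s. The main obstacle is the first step: extracting the sharp remainder bound with the $\Lambda_\rho(Z_k)$ prefactor---rather than a blunt $C(1+V^\rho(Z_k))\t^{1+\varepsilon}$ as in Theorem \ref{co3.1}---is what allows absorption into the drift $-w(Z_k)\t$, and this is precisely the structural role played by Lemmas \ref{Le5.3} and \ref{Le5.4}.
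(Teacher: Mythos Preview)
Your proposal is correct and follows essentially the same route as the paper: Taylor-expand $V^\rho(\tilde Z_{k+1})$ using the $\Lambda_\rho$-weighted remainder bounds of Lemmas~\ref{Le5.3}--\ref{Le5.4}, absorb the $C\Lambda_\rho(Z_k)V^\rho(Z_k)\t^{1+\varepsilon}$ remainder into $-w(Z_k)\t$ via $\Lambda_\rho V^\rho\le w$, and apply the discrete semimartingale convergence theorem (Lemma~\ref{LeY1}) followed by the compactness argument. The only organizational difference is that the paper tracks the martingale increments $\mathcal{N}_k^\t$ explicitly to obtain the pathwise inequality \eqref{**} before summing, whereas you take $\mathbb{E}_k[\cdot]$ first and then invoke the abstract Doob decomposition; both are equivalent ways to feed into Lemma~\ref{LeY1}.
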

\begin{proof}
  For any $\rho>0$,  we deduce  by  virtue of
Lemma  \ref{Le5.3}   that
\begin{align}\la{4x1}
 V^{\rho}(\tilde{Z}_{k+1})
\leq  V^{\rho}(Z_{k}) \big(1+ \zeta_k\big)^{\rho},
\end{align}
where
$$
\zeta_k=V^{-1}(Z_{k})\bigg[\mathcal{L}V(Z_{k})\t
+C \Lambda_{\rho}(Z_k)
  V(Z_k) \t^{2(1-\bar{\theta})} +\sum_{i=1}^{3}\mathcal{S}_{i}^{\t}V(Z_k)+|\tilde{\mathcal{J}}^{\t}V(Z_k)|\bigg],
$$
  if $V(Z_{k})\neq0$, otherwise it is equal to $-1$.
  %%%%%%{\color{blue} because \mathrm{Ker}(V)=\{\mathbf{0}\} }%%%%%%%%%
  Clear,  $\zeta_k\geq-1$. Now we only prove the case that $0<\rho\leq 1$ and the proofs for other cases are similar. By the virtue of \cite[Inequality (3.12)]{Li18}, for any $0<\rho\leq 1$, we derive from  \eqref{4x1} that
\begin{align}\la{ys1}
 \!\!V^{\rho}(\tilde{Z}_{k+1})
\! \leq \! V^{\rho}(Z_{k})I_{\{V(Z_{k})\neq 0\}}(Z_{k}) \!\bigg(\! 1\!+\! \rho \zeta_k\! +\!\frac{\rho(\rho\!-1)}{2}
 \zeta_k^2
+\!\frac{\rho(\rho\!-1)(\rho\!-2)}{6} \zeta_k^3 \!\bigg).\!\!\!\!
\end{align}
It is easy to see that
\begin{align}\la{ys2}
& I_{\{V(Z_{k})\neq 0\}}(Z_k) \zeta_k\nn\\
 =& I_{\{V(Z_{k})\neq 0\}}(Z_k) V^{-1}(Z_{k})\Big[ \mathcal{L}V(Z_{k})\t+ C\Lambda_{\rho}(Z_k)
  V(Z_k) \t^{2(1-\bar{\theta})}\nn\\
  &\qquad\qquad\qquad\qquad\qquad\qquad+  \sum_{i=1}^{3}\mathcal{S}_{i}^{\t}V(Z_k)+|\tilde{\mathcal{J}}^{\t}V(Z_k)|\Big]\nn\\
\leq& I_{\{V(Z_{k})\neq 0\}}(Z_k) V^{-1}(Z_{k})  \mathcal{L}V(Z_{k})\t+ C\Lambda_{\rho}(Z_k)\t^{2(1-\bar{\theta})} +\mathcal{M}_{1}^{\t}V(Z_k),
\end{align}
where
\begin{align*}%\la{ys2}
\mathcal{M}_{1}^{\t}V(Z_k)
 = I_{\{V(Z_{k})\neq 0\}}(Z_k) V^{-1}(Z_{k})\Big[  \sum_{i=1}^{3}\mathcal{S}_{i}^{\t}V(Z_k)+\mathcal{A}_{1}^{\t}V(Z_k)\Big],
\end{align*}
and by   Lemma \ref{Le5.3} we know
$
\mathbb{E}_k\big[\mathcal{M}_{1}^{\t}V(Z_k)\big]=0.
$
 We can now analyze the last two terms of the summation in \eqref{ys1}. First,   combining \eqref{fA.1} and  \eqref{AA_13}   we obtain
\begin{align*}%\la{ESJ}
&\mathcal{S}_{1}^{\t}V(Z_k) |\tilde{\mathcal{J}}^{\t}V(Z_k)|\nn\\
%  =& \mathcal{S}_{1}^{\t}V(Z_k)\Big[C \Lambda_{\rho}(Z_k)V(Z_k)\big(  |\t B_k|^{4}\t^{-2\bar{\theta}}
% + |\t B_k|^{\frac{1}{\delta_4}} \t^{\frac{-\bar{\theta}}{2\delta_4}}\big)\Big] \nn\\
  =&C \mathcal{S}_{1}^{\t}V(Z_k)\Lambda_{\rho}(Z_k)V(Z_k) |\t B_k|^{4}\t^{-2\bar{\theta}}\nn\\
  &+ C \Lambda_{\rho}(Z_k)V(Z_k) \Big(\langle D^{(1)}V(Z_k), g(  Z_k)\t B_k\rangle
|\t B_k|^{\frac{1}{\delta_4}}  \t^{\frac{-\bar{\theta}}{2\delta_4}}\Big)\nn\\
\geq&C \mathcal{S}_{1}^{\t}V(Z_k)\Lambda_{\rho}(Z_k)V(Z_k)|\t B_k|^{4}\t^{-2\bar{\theta}}-C \Lambda_{\rho}(Z_k)V^2(Z_k) \t^{\frac{(\delta_4+1)(1-\bar{\theta})}{2\delta_4}} \nn\\
&- C\t^{\frac{-\bar{\theta}}{2\delta_4}} \Lambda_{\rho}(Z_k)V(Z_k)  | D^{(1)}V(Z_k)| |g(  Z_k)|\Big(    |\t B_k|^{1+\frac{1}{\delta_4}}-K_{1+\frac{1}{\delta_4}}\t^{\frac{\delta_4+1}{2\delta_4}} \Big)
\end{align*}
for  $1/\delta_4\in [4, +\infty)$.  This together with
  Lemma \ref{Le5.3} implies  %  Applying the above estimates, we obtain
\begin{align}\la{ys3}
&I_{\{V(Z_{k})\neq 0\}}(Z_k)\zeta_k^2 \\
%=& I_{\{V(Z_{k})\neq 0\}}(Z_k) V^{-2}(Z_k)
% \Big( \mathcal{L}V(Z_{k})\t+C\Lambda_{\rho}(Z_k) V(Z_k)  \t^{2(1-\bar{\theta})}\nn\\
%&+\sum_{i=1}^{3}\mathcal{S}_{i}^{\t}V(Z_k)+ \tilde{\mathcal{J}}^{\t}V(Z_k)\Big)^2 \nn\\
%%%%%%%%%%%%%%%%%%%%%%%%%%%%%%%%%%%%%%%%%%%%%%%%%%%%%%%%%%%%%%%%%%%%%%%%%%%
\geq& I_{\{V(Z_{k})\neq 0\}}(Z_k) V^{-2}(Z_k)
 \bigg[2\mathcal{S}_{1}^{\t}V(Z_k)\Big(\mathcal{L}V(Z_{k})\t+ C\Lambda_{\rho}(Z_k)
  V(Z_k) \t^{2(1-\bar{\theta})}\Big) \nn\\
 &\qquad \qquad + 2\mathcal{S}_{1}^{\t}V(Z_k)\Big(\mathcal{S}_{2}^{\t}V(Z_k)
 +\mathcal{S}_{3}^{\t}V(Z_k)+ |\tilde{\mathcal{J}}^{\t}V(Z_k)|
\Big)+|\mathcal{S}_{1}^{\t}V(Z_k)|^2\bigg] \nn\\
%%%%%%%%%%%%%%%%%%%%%%%%%
\geq& I_{\{V(Z_{k})\neq 0\}}(Z_k) V^{-2}(Z_k)
 \bigg[ 2\mathcal{S}_{1}^{\t}V(Z_k)\Big(\mathcal{L}V(Z_{k})\t+ C\Lambda_{\rho}(Z_k)
  V(Z_k) \t^{2(1-\bar{\theta})} \Big)\nn\\
 &\qquad\qquad\qquad\qquad\qquad  +2\mathcal{S}_{1}^{\t}V(Z_k) |\tilde{\mathcal{J}}^{\t}V(Z_k)| -C \Lambda_{\rho}(Z_k)V^{2}(Z_k) \t^{2(1-\bar{\theta})}\nn\\
 &\qquad\qquad\qquad\qquad\qquad  +2\sum_{i=2}^{3}\mathcal{H}_{1,i}^{\t}V(Z_k)+| D^{(1)}V(Z_{k})  g(  Z_{k})|^2\t+\mathcal{H}_{1,1}^{\t}V(Z_k)
 \bigg]\nn\\
%%%%%%%%%%%%%%%%%%%%%%%%%%
%=:&  I_{\{V(Z_{k})\neq 0\}}(Z_k) V^{-2}(Z_k)
% \Big[| D^{(1)}V(Z_{k}) g(  Z_{k})|^2\t-C\Lambda_{\rho}(Z_k) V^{2}(Z_k)\t^{2(1-\theta)}\Big] +\mathcal{M}_2^{\t}V(Z_k) \nn\\
%%%%%%%%%%%%%%%%%%
 \geq& I_{\{V(Z_{k})\neq 0\}}(Z_k) V^{-2}(Z_k) | D^{(1)}V(Z_{k})  g(  Z_{k})|^2\t
  - C  \Lambda_{\rho}(Z_k)\t^{2(1-\bar{\theta})}+\mathcal{M}_2^{\t}V(Z_k), \nn
\end{align}
where
\begin{align*}
&\mathcal{M}_2^{\t}V(Z_k)=I_{\{V(Z_{k})\neq 0\}}(Z_k) V^{-2}(Z_k)
 \bigg[\mathcal{H}_{1,1}^{\t}V(Z_k) +2\sum_{i=2}^{3}\mathcal{H}_{1i}^{\t}V(Z_k)\nn\\
 &\qquad\quad\quad+2\mathcal{S}_{1}^{\t}V(Z_k)\Big(\mathcal{L}V(Z_{k})\t+ C\Lambda_{\rho}(Z_k)
  V(Z_k) \big(\t^{2(1-\bar{\theta})}+\t^{-2\bar{\theta}} |\t B_k|^{4}\big)\Big) \nn\\
 &\qquad\quad\quad- C\t^{\frac{-\bar{\theta}}{2\delta_4}} \Lambda_{\rho}(Z_k)V(Z_k)  | D^{(1)}V(Z_k)| |g(  Z_k)|\Big(    |\t B_k|^{1+\frac{1}{\delta_4}}-K_{1+\frac{1}{\delta_4}}\t^{\frac{\delta_4+1}{2\delta_4}} \Big)  \bigg],
\end{align*}
and from Lemma \ref{Le5.3} and \eqref{E_k} we have
 $\mathbb{E}_k\big[\mathcal{M}^{\t}_2 V(Z_{k})\big]=0$.
  By \eqref{D**} and \eqref{2Y_01}, we deduce that
\begin{align*}
  \big|\mathcal{L}V(Z_{k})\big|\t=&\Big|\langle  D^{(1)}V(Z_{k}), f(  Z_{k})\rangle
 +\frac{1}{2} \hbox{tr}\Big[g^T( Z_{k})  D^{(2)}V(Z_{k}) g(  Z_{k})\Big]\Big|\t \nn\\
\leq&|D^{(1)}V(Z_{k})| |f(  Z_{k})|\t
 + | D^{(2)}V(Z_{k})| |g(  Z_{k})|^2\t \nn\\
 \leq&c K\t^{1-\bar{\theta}}\Big(\Lambda_{\rho}^{\frac{1}{2}}(Z_k) V(Z_{k})
 + \Lambda_{\rho}(Z_k)
  V(Z_k)\Big)
 \leq C\Lambda_{\rho}^{\frac{1}{2}}(Z_k) V (Z_k)
  \t^{1-\bar{\theta}}.
\end{align*}
This together with
  Lemmas \ref{Le5.3} and \ref{Le5.4} implies
\begin{align}\la{ys4}
&I_{\{V(Z_{k})\neq 0\}}(Z_k) \zeta_k^3 \\
\leq&I_{\{V(Z_{k})\neq 0\}}(Z_k) V^{-3}(Z_{k})
\bigg[C \Lambda_{\rho}^{\frac{1}{2}}(Z_k) V(Z_k)\t^{1-\bar{\theta}}
 +  \sum_{i=1}^{3}\mathcal{S}_{i}^{\t}V(Z_k)
 +|\tilde{\mathcal{J}}^{\t}V(Z_k)|\bigg]^3\nn\\
\leq& I_{\{V(Z_{k})\neq 0\}}(Z_k) V^{-3}(Z_{k})
\bigg\{ \! C\Lambda_{\rho}^{\frac{3}{2}}(Z_k)
  V^3(Z_k)  \t^{3(1\!-\bar{\theta})} \!+\!\Big[\sum_{i=1}^{3}\mathcal{S}_{i}^{\t}V(Z_k)\!+\!|\tilde{\mathcal{J}}^{\t}V(Z_k)|\Big]^3\nn\\
&\qquad\qquad\qquad\qquad\qquad+  C\Lambda_{\rho}(Z_k) V^2(Z_k)  \t^{2(1-\bar{\theta})} \Big[\sum_{i=1}^{3}\mathcal{S}_{i}^{\t}V(Z_k) +|\tilde{\mathcal{J}}^{\t}V(Z_k)|\Big]\nn\\
&\qquad\qquad\qquad\qquad\qquad+  C \Lambda_{\rho}^{\frac{1}{2}}(Z_k)V(Z_k)   \t^{1-\bar{\theta}}  \Big[\sum_{i=1}^{3}\mathcal{S}_{i}^{\t}V(Z_k)+|\tilde{\mathcal{J}}^{\t}V(Z_k)|\Big]^2\bigg\}\nn\\
\leq&I_{\{V(Z_{k})\neq 0\}}(Z_k) V^{-3}(Z_{k})
\bigg\{  C\Lambda_{\rho}(Z_k)
  V^3(Z_k)  \t^{3(1-\bar{\theta})}+\bigg(\sum_{i=1}^{3}\mathcal{S}_{i}^{\t}V(Z_k)\bigg)^3 \nn\\
&\qquad\qquad\qquad\qquad\qquad+C\big|\tilde{\mathcal{J}}^{\t}V(Z_k)\big|^3
+C\sum_{i=1}^{3}\big|\mathcal{S}_{i}^{\t}V(Z_k)\big|^2|\tilde{\mathcal{J}}^{\t}V(Z_k)|\nn\\
&\qquad\qquad\qquad\qquad +  C \Lambda_{\rho}(Z_k)V^2(Z_k)  \t^{2(1-\bar{\theta})} \bigg(\sum_{i=1}^{3}\mathcal{S}_{i}^{\t}V(Z_k)+|\tilde{\mathcal{J}}^{\t}V(Z_k)|\bigg)\nn\\
&\qquad\qquad\qquad\qquad +  C \Lambda^{\frac{1}{2}}_{\rho}(Z_k)V(Z_k)   \t^{1-\bar{\theta}}  \bigg(\sum_{i=1}^{3}\big|\mathcal{S}_{i}^{\t}V(Z_k)\big|^2
+\big|\tilde{\mathcal{J}}^{\t}V(Z_k)\big|^2\bigg)
\bigg\}\nn\\
\leq&C I_{\{V(Z_{k})\neq 0\}}(Z_k) V^{-3}(Z_{k})\Lambda_{\rho}(Z_k)
\bigg[
  V^3(Z_k)  \t^{3(1-\bar{\theta})} +   V^3(Z_k)  \t^{6(1-\bar{\theta})}
\nn\\
&\qquad\qquad+   V^3(Z_k)   \t^{3(1-\bar{\theta})}+  V^3(Z_k)  \t^{4(1-\bar{\theta})}+  V^3(Z_k)  \t^{5(1-\bar{\theta})}+  V^3(Z_k)  \t^{4(1-\bar{\theta})}\nn\\
&\qquad\qquad\qquad\qquad+  V^3(Z_k)  \t^{2(1-\bar{\theta})}
+  V^3(Z_k)   \t^{5(1-\bar{\theta})}\bigg]+\mathcal{M}^{\t}_3V(Z_{k})\nn\\
  \leq&C\Lambda_{\rho}(Z_k)
 \t^{2(1-\bar{\theta})} +\mathcal{M}^{\t}_3V(Z_{k}),\nn
\end{align}
where
\begin{align*}
&\mathcal{M}^{\t}_3V(Z_{k})\nn\\
=&I_{\{V(Z_{k})\neq 0\}}(Z_k) V^{-3}(Z_{k})
 \bigg\{
 C \Lambda_{\rho}(Z_k)V^2(Z_k)  \t^{2(1-\bar{\theta})} \bigg(\sum_{i=1}^{3}\mathcal{S}_{i}^{\t}V(Z_k)+\mathcal{A}_1^{\t}V(Z_k)\bigg)\nn\\
 &+  C \Lambda_{\rho}^{\frac{1}{2}}(Z_k)V(Z_k)   \t^{1-\bar{\theta}}  \bigg(\sum_{i=1}^{3} \mathcal{H}_{i,i}^{\t}V(Z_k) + \mathcal{A}_2^{\t}V(Z_k) \bigg)
+\bigg(\sum_{i=1}^{3}\mathcal{S}_{i}^{\t}V(Z_k)\bigg)^3 \nn\\
&\qquad\qquad\qquad\qquad\qquad +C\sum_{i=4}^{6} \mathcal{A}_{i}^{\t}V(Z_k)+  C\mathcal{A}_3^{\t}V(Z_k)\bigg\},
\end{align*}
and   from Lemmas \ref{Le5.3} and \ref{Le5.4} it is easy to see that $\mathbb{E}_k\big[\mathcal{M}^{\t}_3V(Z_{k})\big]=0$.
Thus,  for any integer $k\geq 0$,
 substituting (\ref{ys2})-(\ref{ys4}) into (\ref{ys1}), we deduce from (\ref{w1}) that
%
%
%combining (\ref{ys1}), (\ref{ys2}), (\ref{ys3}) and (\ref{ys4}) and using inequality  \eqref{w1},  then gives that
\begin{align}\la{**}
V^{\rho}(\tilde{Z}_{k+1})
\leq& I_{\{V(Z_{k})\neq 0\}}(Z_k)V^{\rho}(Z_{k})\bigg\{1+ C  \Lambda_{\rho}(Z_k) \t^{2(1-\bar{\theta})}\nn\\
 &       +\frac{\rho\t}{2}V^{-2}(Z_{k})\Big[ 2V(Z_{k})
\mathcal{L}V(Z_{k})-(1-\rho)| D^{(1)}V(Z_{k}) g(  Z_{k})|^2 \Big] \bigg\}+\mathcal{N}^{\t}_k\nn\\
=& I_{\{V(Z_{k})\neq 0\}}(Z_k)\bigg\{  V^{\rho}(Z_{k})+ C   \Lambda_{\rho}(Z_k) V^{\rho}(Z_{k})   \t^{2(1-\bar{\theta})}\nn\\
 &    +\frac{\rho\t}{2}V^{\rho-2}(Z_{k})\Big[ 2V(Z_{k})
\mathcal{L}V(Z_{k})-(1-\rho)| D^{(1)}V(Z_{k}) g(  Z_{k})|^2 \Big] \bigg\}+\mathcal{N}^{\t}_k\nn\\
\leq& I_{\{V(Z_{k})\neq 0\}}(Z_k)\Big(  V^{\rho}(Z_{k})+ C  w  (Z_k)  \t^{2(1-\bar{\theta})}-   w  (Z_k)   \t \Big)+\mathcal{N}^{\t}_k\nn\\
\leq&  V^{\rho}(Z_{k})
 -\big(1-C \t^{1-2\bar{\theta}}\big)w(Z_{k})\t +\mathcal{N}^{\t}_k,
\end{align}
where
\begin{align*}
\mathcal{N}^{\t}_k:=V^{\rho}(Z_{k})\big[\mathcal{M}^{\t}_1V(Z_{k})+\mathcal{M}^{\t}_2V(Z_{k})+\mathcal{M}^{\t}_3V(Z_{k})\big],
\end{align*}
and  we can see that $\mathbb{E}_k\big[\mathcal{N}^{\t}_k\big]=0$.
 Thus the required inequality \eqref{**} for the case $\rho>1$  can be proved similarly.
%Using
% $  V^{\rho}(Z_{k}) \leq  V^{\rho}(\tilde{Z}_{k})$, we derive that
%
Furthermore,
$$V(\bar{\pi}_{\t}(x))\leq  V(x) \qquad\forall~x\in \mathbb{R}^d,$$
which implies that
 \begin{align*}%\la{***}
V^{\rho}(Z_{k+1})\leq V^{\rho}(\tilde{Z}_{k+1})
 \leq V^{\rho}(Z_{k})-\big(1-C \t^{1-2\bar{\theta}}\big)w(Z_{k})\t+ \mathcal{N}^{\t}_k.
\end{align*}
For any given $\gamma\in(0,1)$, we choose    $ {\t} ^{**}\in(0,\t^*] $ small  sufficiently
such that
\begin{align*}
 {\t} ^{**}\leq\big[(1-\gamma)/C\big]^{1/(1-2\bar{\theta})},  %C^{-1/(2\bar{\theta})}
\end{align*}
which implies that for all integer $k\geq1$ and any $ {\t}\in(0,\t^{**}] $,
\begin{align*}%\la{***}
V^{\rho}(Z_{k+1})
 \leq V^{\rho}(Z_{k})-\gamma w(Z_{k}) \t+ \mathcal{N}^{\t}_k,
\end{align*}
Then taking sum on $k$, we have
\begin{align*}%\la{++}
  V^{\rho}(Z_{k})
\leq&  V^{\rho}(Z_{0})
 -\gamma\sum_{i=0}^{k-1}w(Z_{i})\t +\mathcal{M}_k,
\end{align*}
where $\mathcal{M}_k=\sum_{i=0}^{k-1}\mathcal{N}^{\t}_i$. It is  easy to show that
\begin{eqnarray*}%\label{}
 \mathbb{E}\left.\left[\mathcal{M}_{k}\right| \mathcal{F}_{{k-1}}\right] =
\mathcal{M}_{k-1}+\mathbb{E}\left[\mathcal{N}^{\t}_{k-1}| \mathcal{F}_{{k-1}}\right]=\mathcal{M}_{k-1}.
\end{eqnarray*}
This implies immediately that $\mathcal{M}_{k}$ is a  martingale with $\mathcal{M}_0=0$.
Set
$$
V_k:=V_0
 -\gamma\sum_{i=0}^{k-1}w(Z_{i})\t +\mathcal{M}_k
$$
with $V_0=V^{\rho}(Z_{0})$, and
$
 \sum_{i=0}^{k-1}w(Z_i)\t
$
is increasing in $k$.  It is easy to see that $V^{\rho}(Z_{k})\leq V_k$ a.s. Thus, by   Lemma \ref{LeY1}, we infer that
$$
\lim_{k\rightarrow\infty} V^{\rho}(Z_k)<\infty\quad\mathrm{a.s.}\qquad\mathrm{and}\qquad \sum_{i=0}^{\infty}w(Z_i)\t <\infty \ \ \mathrm{a.s.}
$$
which implies that
\begin{align}\la{0YHF_3}
\lim_{k\rightarrow\infty}w(Z_k)=0 \quad \mathrm{a.s.}\qquad
\mathrm{and}\qquad
\sup_{0\leq k< \infty} V^{\rho}(Z_k) <+\infty \ \ \ \mathrm{a.s.}
\end{align}
Define $\phi:\bar{\mathbb{R}}_+\rightarrow \bar{\mathbb{R}}_+$ by
$
\phi(r)=\inf\limits_{|x|\geq r}V^{\rho}(x)$ for $r\geq 0$.
 Clear, $\phi(|Z_k|)\leq V^{\rho}(Z_k)$. So
$$\sup_{0\leq k<\infty}\phi(|Z_k|)\leq \sup_{0\leq k<\infty}V^{\rho}(Z_k)<+\infty \ \ \ \mathrm{a.s.}$$
On the other hand, due to $ V(\cdot)\in \mathcal{C}_{\infty}^{4}(\mathbb{R}^{d}; \bar{\mathbb{R}}_+)
$  we know
$
\lim\limits_{r\rightarrow \infty}\phi(r)=\infty,
$
 which implies
\begin{align}\la{new69}
\sup_{0\leq k< \infty} |Z_k| <+\infty \ \ \ \mathrm{a.s.}
\end{align}
Moreover, when $w(x)=0$ iff $x=\mathbf{0}$ one concludes that
$$
\lim_{k\rightarrow\infty} Z_k =\mathbf{0}  \ \ \ \mathrm{a.s.}
$$
Observe, from \eqref{0YHF_3}  and \eqref{new69} that there is an $\Omega_0\subset \Omega$ with $\mathbb{P}(\Omega_0)=1$, such that
\begin{align}\la{0YHF_2}
\lim_{k\rightarrow\infty}w(Z_k(\omega))=0,\quad\sup_{0\leq k< \infty}|Z_k(\omega)|<+\infty \qquad\forall \omega \in \Omega_0.
\end{align}
We shall now show that
\begin{align}\la{0YHF}
\lim_{k\rightarrow \infty}d\left( Z_k( \omega) , \mathrm{Ker}(w)\right)=0 \qquad \forall \omega\in \Omega_0.
\end{align}
  If this is false, then there is some $\bar{\omega}\in \Omega_0$   such that
$$
\limsup_{k\rightarrow \infty}d\left(Z_k(\bar{\omega}) ,\mathrm{Ker}(w)\right)>0,
$$
whence there is a subsequence $\{Z_{k_{n}}(\bar{\omega})\}_{n\geq 1}$ of $\{ Z_k(\bar{\omega}) \}_{k\geq 0}$ such that
$$
d\left(Z_{k_{n}}(\bar{\omega}),\mathrm{Ker}(w)\right)\geq \epsilon, \quad n\geq 1,
$$
for some $\epsilon>0$. Since $\{ Z_{k_{n}}(\bar{\omega})\}_{n\geq 0}$ is bounded, we can find a subsequence $\{ Z_{k_{n_{j}}}\!(\bar{\omega}) \}_{j\geq 0}$ of $\{Z_{k_{n}}(\bar{\omega})\}_{n\geq 0}$ and
$$
\lim_{j\rightarrow\infty}Z_{k_{n_{j}}}(\bar{\omega}) =\hat{z},\qquad\hat{z}\in \mathbb{R}^d.
$$
Clearly, $\hat{z}\notin \mathrm{Ker}(w)$ so $w(\hat{z})>0$. However, by (\ref{0YHF_2}),
$$
w(\hat{z})=\lim_{j\rightarrow\infty}w\big(Z_{k_{n_{j}}}(\bar{\omega})\big)=0,
$$
which contradicts $w(\hat{z})>0$. Hence (\ref{0YHF}) must hold and the required assertion (\ref{2YHF_01}) follows since $\mathbb{P}(\Omega_0)=1$. Therefore, the proof is complete.
\end{proof}

\begin{remark}
If $V\in  \bar{\mathcal{V}}^{4}_{\delta_4}$ in Theorme \ref{th4.2} is replaced by $V\in   \bar{\mathcal{V}}^{p}_{\delta_p}\cap \{D^{(p+1)}V(\cdot)\equiv0\}$ for $p=2$ or $3$ and some integer  $1/\delta_p\in [p, +\infty)$, then the conclusion of Theorme \ref{th4.2} still holds. This implies $1/\delta_4$ may be less than 4, since Taylor's formula with  integral remainder  term $J(\tilde{Z}_{k+1}, Z_k)\equiv 0$  in the estimation of \eqref{4x1}.
\end{remark}

\begin{remark}
If $\tilde{V}\in  \bar{\mathcal{V}}^{4}_{\delta_4}$ in Theorme \ref{th4.2} is replaced by $\tilde{V}\in  \bar{\mathcal{V}}^{3}_{\delta_3}$ for some integer  $1/\delta_3\in [3, +\infty)$,  we choose
a strictly increasing continuous function
$\bar{\f}: \RR_+\rightarrow \RR_+$ with  $\bar{\f}(u)\rightarrow \infty$
as $u\rightarrow \infty$ such that
$$
 \sup_{0<|x|\leq u}   \bigg[ \frac{ |f( x)|}{\Lambda_{\rho}^{1/2}(x) \tilde{V}^{\delta_3}(x)}
 \vee  \frac{ |g(x)|^2}{\Lambda_{\rho}(x)\tilde{V}^{2 \delta_{3}}(x)} \bigg]\leq \bar{\f}(u),\qquad\forall~ u>0.
$$
Then for any given $0<\bar{\theta}< 1/3$ the corresponding  $V$-truncated EM scheme \eqref{YY_0} may reproduce the   LaSalle-type theorem.
  It turns out that the smoothness of $V(x)$ affects the construction of the   scheme (\ref{YY_0}).
\end{remark}

Next, we shall discuss the exponential stability of $V$-truncated EM scheme   \eqref{YY_0}. Here, the key idea is to show not only that the  quantity $V(Z_k)$ decays with time, but also that the decay is exponential.
\begin{corollary}\la{coco4.2}
Under the conditions of Theorem \ref{th4.2}, and $w(x)\geq \mu V^{\rho}(x)$
 for all $x\in \mathbb{R}^d$, where $\mu$ is a positive constant.
Then for any    $\varrho\in(0,  \mu )$ there is a constant $ {\t}^{**}\in(0,\t^*]$  such that  for any $\t\in(0,  {\t}^{**}]$ the $V$-truncated EM scheme   \eqref{YY_0} has the   property that
  \be\la{FyhfF_4}
 \limsup_{k\rightarrow \infty}\frac{\log\big(\E V^{\rho}(Z_{k})\big)}{k\t} \leq  -  \big(\mu-\varrho\big) <0,
   \ee
and
  \be\la{FyhfF_5}
  \limsup_{k\rightarrow \infty}\frac{\log\big(V(Z_{k})\big) }{k\t}
   \leq-\frac{\mu-\varrho}{\rho}\qquad a.s.
   \ee
  \end{corollary}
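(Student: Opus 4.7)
The idea is to extract an exponential decay from the one-step estimate derived in the proof of Theorem \ref{th4.2}. Combining the key inequality \eqref{**} with the truncation property $V(\bar{\pi}_\t(x))\leq V(x)$ and the hypothesis $w(x)\geq \mu V^{\rho}(x)$ yields, for every $\t\in(0,\t^{*}]$ and every $k\geq 0$,
\begin{align*}
V^{\rho}(Z_{k+1}) \leq \bigl(1-\mu\t+C\mu\t^{2(1-\bar\theta)}\bigr)V^{\rho}(Z_{k}) + \mathcal{N}^{\t}_{k},\qquad \mathbb{E}_{k}[\mathcal{N}^{\t}_{k}]=0.
\end{align*}
For any prescribed $\varrho\in(0,\mu)$ I would choose $\t^{**}\in(0,\t^{*}]$ small enough that both $C\mu(\t^{**})^{1-2\bar\theta}\leq\varrho$ and $(\mu-\varrho)\t^{**}\leq 1$ hold; then for every $\t\in(0,\t^{**}]$,
\begin{align*}
V^{\rho}(Z_{k+1}) \leq \bigl(1-(\mu-\varrho)\t\bigr)V^{\rho}(Z_{k}) + \mathcal{N}^{\t}_{k}.
\end{align*}

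Taking conditional expectations, iterating, and using the elementary bound $1-(\mu-\varrho)\t\leq e^{-(\mu-\varrho)\t}$ gives
\begin{align*}
\mathbb{E}V^{\rho}(Z_{k})\leq V^{\rho}(x_{0})\bigl(1-(\mu-\varrho)\t\bigr)^{k}\leq V^{\rho}(x_{0})\,e^{-(\mu-\varrho)k\t},
\end{align*}
and taking logarithms and dividing by $k\t$ yields the expected-value estimate \eqref{FyhfF_4} at once.

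For the almost sure bound \eqref{FyhfF_5}, I would set $X_{k}:=e^{(\mu-\varrho)k\t}V^{\rho}(Z_{k})$ and multiply the recursion by $e^{(\mu-\varrho)(k+1)\t}$. Invoking $e^{(\mu-\varrho)\t}\bigl(1-(\mu-\varrho)\t\bigr)\leq 1$ then shows $\mathbb{E}_{k}[X_{k+1}]\leq X_{k}$, so $\{X_{k}\}$ is a nonnegative $\{\mathcal{F}_{t_{k}}\}$-supermartingale. Through its Doob decomposition $X_{k}=X_{0}-U_{k}+\mathcal{M}_{k}$, with the nondecreasing predictable compensator $U_{k}:=\sum_{i<k}\mathbb{E}_{i}[X_{i}-X_{i+1}]\geq 0$ and the martingale $\mathcal{M}_{k}:=X_{k}-X_{0}+U_{k}$, an application of Lemma \ref{LeY1} (with the ``$A_{k}$'' there identically zero) yields $\lim_{k\to\infty}X_{k}<\infty$ almost surely. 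Consequently $\limsup_{k\to\infty}k^{-1}\t^{-1}\log V^{\rho}(Z_{k})\leq -(\mu-\varrho)$ a.s., and dividing by $\rho$ produces \eqref{FyhfF_5}.

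I expect no substantive obstacle beyond careful bookkeeping: the decisive one-step estimate has already been painstakingly assembled inside the proof of Theorem \ref{th4.2}, and the only delicate point is that $\t^{**}$ must be chosen uniformly small enough in $\varrho$ to absorb the error contribution $C\mu\t^{2(1-\bar\theta)}$; since $\bar\theta<1/2$ this is automatic for sufficiently small $\t$.
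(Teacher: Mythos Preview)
Your argument is correct. The derivation of the moment bound \eqref{FyhfF_4} is essentially identical to the paper's: both extract the one-step contraction $\mathbb{E}_k[V^\rho(Z_{k+1})]\leq (1-(\mu-\varrho)\t)V^\rho(Z_k)$ from the estimate \eqref{**} (together with $w\geq \mu V^\rho$ and the truncation inequality) and iterate.

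For the almost sure bound \eqref{FyhfF_5} you take a genuinely different route. The paper simply invokes Chebyshev's inequality on the moment bound $\mathbb{E}V^\rho(Z_k)\leq V^\rho(x_0)e^{-(\mu-\varrho)k\t}$ and then Borel--Cantelli, deferring the details to \cite{Higham1}. You instead keep the pathwise inequality with the martingale increment $\mathcal{N}^\t_k$, form $X_k=e^{(\mu-\varrho)k\t}V^\rho(Z_k)$, observe it is a nonnegative supermartingale, and appeal to Lemma~\ref{LeY1} (equivalently, the supermartingale convergence theorem). Both methods are standard; yours is slightly more self-contained in that it uses the same Lemma~\ref{LeY1} already employed in Theorem~\ref{th4.2}, and it delivers the exact rate $-(\mu-\varrho)/\rho$ directly without a further $\epsilon$-argument. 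The only implicit prerequisite in your route is that $X_k$ be integrable so that the Doob decomposition and Lemma~\ref{LeY1} apply; this follows at once from $\mathbb{E}X_k\leq V^\rho(x_0)$, which you already established.
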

\begin{proof}
If $w(x)\geq \mu V^{\rho}(x)$ for some $\mu>0$ and $\rho>0$, then instead of  \eqref{**} one runs the same calculation to get
\begin{align*}
 \mathbb{E}_k\big[ V^{\rho}(\tilde{Z}_{k+1})\big]
%\leq& I_{\{V(Z_{k})\neq 0\}}(Z_k)\bigg\{V^{\rho}(Z_{k})+ C  V^{\rho}(Z_k)  \t^{2(1-\bar{\theta})} \nn\\
%& +\frac{\rho\t}{2}V^{\rho-2}(Z_{k})\Big[ 2V(Z_{k})
%\mathcal{L}V(Z_{k})-(1-\rho)| D^{(1)}V(Z_{k})  g(  Z_{k})|^2 \Big] \bigg\}\nn\\
\leq&  V^{\rho}(Z_{k})+CV^{\rho}(Z_{k})\t^{2(1-\bar{\theta})}-\mu V^{\rho}(Z_{k})\t.
\end{align*}
  For any  $\varrho\in(0, \mu)$,   choose    $ {\t} ^{**}\in(0,\t^*] $ sufficiently small
such that
$C \big({\t}^{**}\big)^{1-2\bar{\theta}} \leq\varrho {\t} ^{**}$,
${\t} ^{**}<  1/(\mu-\varrho)$.
 Taking expectations on both sides, then we have
\begin{align*}
 \mathbb{E}\big[ V^{\rho}(Z_{k+1})\big]
\leq \mathbb{E}\big[ V^{\rho}(\tilde{Z}_{k+1})\big]
\leq \Big(1-  (\mu-\varrho) \t   \Big) \mathbb{E}\big[ V^{\rho}(Z_{k})\big]
\end{align*}
for any $\t\in(0, {\t} ^{**}]$.   % for any integer $k\geq 0$.
Repeating this procedure we obtain
\begin{align*}
 \mathbb{E}\big[ V^{\rho}(Z_{k})\big]
\leq   \Big(1- (\mu-\varrho) \t   \Big)^k  V^{\rho}(x_{0})\leq   \exp\Big(- (\mu-\varrho)  k\t \Big) V^{\rho}(x_{0}),\ \ \forall k\geq 1,
\end{align*}
which implies  the required assertion (\ref{FyhfF_4}) holds.
Moreover, the other required assertion (\ref{FyhfF_5}) follows from the Chebyshev inequality and the Borel-Cantelli lemma (see, e.g., \cite[p.7]{Mao08}) directly,
please refer to \cite[p.600]{Higham1}.
\end{proof}

\begin{remark}   In the special case where $V^{\rho}(x)=|x|^{2\rho}$ with $\rho>0$ and $w(x)\geq \mu V^{\rho}(x)$
 for all $x\in \mathbb{R}^d$. We choose $\delta_4=1/2$, then $V\in  \bar{\mathcal{V}}^{2}_{\delta_4}\cap \{D^{(3)}V(\cdot)\equiv0\}$ and equation \eqref{ee21} become much simpler. That is to say,  we only need to choose a strictly increasing continuous function
$\bar{\f}: \bar{\RR}_+\rightarrow \bar{\RR}_+$ such that $\bar{\f}(u)\rightarrow \infty$
as $u\rightarrow \infty$ and
\begin{align*}
 \sup_{0<|x|\leq u}  \bigg\{  \frac{ |f(x)|}{|x|}
 \vee  \frac{ |g(x)|^2}{|x|^2} \bigg\}\leq \bar{\f}(u) \qquad\forall~u\geq 1.
 \end{align*}
\end{remark}
%应该能得到已有的结果

%\begin{rem}
%Obviously, $ \bar{\mathcal{V}}^{p}_{\delta}$  includes almost all  Lyapunov functions  presented by \cite{16,Lukasz17}. %We will consider functions $V^{\rho}(\cdot)$ with $V(\cdot) \in   \mathcal{V}^{p}_{\delta,0}$, thus explicit scheme  \eqref{YY_0} allow a bigger class of Lyapunov functions.
%\end{rem}

\subsection{Further results of numerical solutions}\label{a-ss3}
 In practice we always wish that the numerical solutions  will preserve the stability of the exact solution  perfectly for some given Lyapunov functions.
 Although most practically relevant Lyapunov functions can be found in the subset $\bar{\mathcal{V}}^{p}_{\delta_p}$ defined in \eqref{D**},  we may treat them as a special case.
 In this subsection, we will consider the following class of Lyapunov  functions $\hat{\mathcal{V}}^{p}_{\delta_p}$.
 It is sufficiently large so is rich enough for one to choose suitable Lyapunov functions for many of important SDEs
(see \cite{16,Lukasz17} for more details).
 To be precise,
 define
\begin{align*}%\la{V**}
 \hat{\mathcal{V}}^{p}_{\delta_{p}}:=
 \mathcal{V}^{p}_{\delta_{p}}\cap
 \big\{\mathrm{Ker}(V)=\{\mathbf{0}\}\big\}.
\end{align*}
It is easy to see that
 $\hat{\mathcal{V}}^{p}_{\delta_{p}}$  includes almost all Lyapunov functions  presented in \cite{16,Lukasz17}.  In this  subsection,  we
assume there is a function
  $V(\cdot) \in   \hat{\mathcal{V}}^{4}_{\delta_4}$  such that
 \begin{align}\la{CA2}
\sup_{0<|x|\leq u} \frac{ (1+V(x))^{1- 2 \delta_4} \big(|f(x)|^2\vee |g(x)|^2\big) }{\Lambda_{\rho}(x)V(x)}< +\infty
 \end{align}
for any $u>0$. To define the truncation mapping for a super-linear diffusion and drift terms, we choose function $V  \in   \hat{\mathcal{V}}^{4}_{\delta_{4}}$ and a strictly increasing continuous function
$\hat{\f}: \bar{\RR}_+\rightarrow \bar{\RR}_+$ such that $\hat{\f}(u)\rightarrow \infty$
as $u\rightarrow \infty$ and
 \begin{align*}
 \sup_{0<|x|\leq u}  \Bigg\{ \frac{ (1+V(x))^{\frac{1}{2}- \delta_4}
 |f(x)|}{\big(\Lambda_{\rho}(x)V(x)\big)^{ {1}/{2}}}
 \vee  \frac{ (1+V(x))^{1- 2 \delta_4} |g(x)|^2}{\Lambda_{\rho}(x) V(x)} \Bigg\}\leq \hat{\f}(u)
 \end{align*}
for any $u\geq 1$ .
 Owing to   $f$ and $g$ satisfy \eqref{CA2},   the function $\hat{\f}$ can be well defined as well as its inverse function $\hat{\f}^{-1}: [\hat{\f}(1),\infty)\rightarrow \bar{\RR}_+$.
Then choose   a pair of positive
constants $\t^*\in (0,1)$ and  $K$ such that
 $ K(\t^*)^{-\hat{\theta}}\geq\hat{\f}(|x_0|\vee 1)
$
holds for some   $0<\hat{\theta}<1/2$, where  $ {K}$ is a positive constant  independent of the iteration order  $k$ and the time stepsize $\t$.
For  the given $\t\in (0, \t^*]$,   define a truncation mapping $\hat{\pi}_{\t}:\RR^d\ra \RR^d　$ by
  $
\hat{\pi}_\t(x)= \Big(|x|\wedge \hat{\f}^{-1}\big(K\t^{-\hat{\theta}}\big)\Big) \frac{x}{|x|},
$
where we let  $\frac{x}{|x|}=\mathbf{0}$ when $x=\mathbf{0}\in \mathbb{R}^d$ is a zero vector.
 Next,
for any given stepsize $\t\in (0, \t^*]$, define the $V$-truncated EM  scheme by
 \begin{align}\la{YYY_0}
\left\{
\begin{array}{ll}
Z_0=x_0,&\\
\tilde{Z}_{k+1}=Z_k+f(Z_k)\t +g(Z_k)\t B_k,& \\
Z_{k+1}=\hat{\pi}_\t(\tilde{Z}_{k+1}),&
\end{array}
\right.
\end{align}
for any integer $k\geq 0$, where $t_k=k \t$ and $\t B_k=B(t_{k+1})-B(t_{k})$. To obtain the continuous-time approximation, we define $Z(t)$ by $Z(t):=Z_k$ for any $t\in[t_k,  t_{k+1})$.
 Then the drift and diffusion terms have the
    property
\begin{align}\la{eqy5.31}
\begin{split}
  |f (Z_k) |^2  \leq  \frac{ \big(K\t^{-\hat{\theta}}\big)^2
  \Lambda_{\rho}(Z_k)V(Z_k)}{(1+V(Z_k))^{1-2\delta_4}} ,\qquad
  |g(Z_k)|^2 \leq  \frac{K\t^{-\hat{\theta}}\Lambda_{\rho}(Z_k)
  V(Z_k)}{(1+V(Z_k))^{1-2\delta_4}} .
\end{split}
\end{align}
which implies   (\ref{Y_01})  holds.  As the special  case
 $V  \in   \bar{\mathcal{V}}^{4}_{\delta_4}$,    all of the conclusions  in Section \ref{a-ss2} holds  for the   scheme \eqref{YYY_0} under the conditions of Lemma \ref{th4.1} and we don't mention them to avoid duplication.

 %As the special  case
% $V  \in   \bar{\mathcal{V}}^{p}_{\delta}\cap \{D^{(p+1)}V(\cdot)\equiv 0\}$,   the conclusion of Theorem \ref{th4.2} holds for the   scheme \eqref{YYY_0} under the condition of Lemma \ref{th4.1}.

\begin{theorem}\la{yTh4.2}    Assume the conditions of Theorem \ref{th4.2}  hold  for $\rho=1$ and $V\in   \hat{\mathcal{V}}^{4}_{\delta_4}$.
 Then
   there is a constant $\t^{**}\in (0, \t^*]$ such that for any $\t\in(0,\t^{**}]$ the $V$-truncated EM scheme   \eqref{YY_0}  has the property that
\begin{align*}
\limsup_{k\rightarrow\infty} V(Z_k)<\infty,\qquad \lim_{k\rightarrow \infty}d(Z_k,\mathrm{Ker}(w))=0\quad\mathrm{a.s.}
\end{align*}
Moreover, if $w(x)=0$   iff $x=\mathbf{0}\in \mathbb{R}^d$, then for any $\t\in(0,\t^{**}]$,
\begin{align*} %\la{Y_9}
\lim_{k\rightarrow\infty} Z_k=\mathbf{0},\ \ \mathrm{a.s.}
\end{align*}
\end{theorem}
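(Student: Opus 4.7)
The plan is to follow the blueprint of Theorem \ref{th4.2} almost verbatim, working with the scheme \eqref{YYY_0} (whose truncation map is $\hat{\pi}_{\t}$) and the slightly richer class $\hat{\mathcal{V}}^{4}_{\delta_4}$ in place of $\bar{\mathcal{V}}^{4}_{\delta_4}$. Since $\rho=1$, I would apply Taylor's formula with fourth-order integral remainder directly to $V$ at $Z_k$,
\[
V(\tilde{Z}_{k+1})=V(Z_k)+\sum_{|\alpha|=1}^{3}\tfrac{D^{\alpha}V(Z_k)}{\alpha!}\bigl(\tilde{Z}_{k+1}-Z_k\bigr)^{\alpha}+J(\tilde{Z}_{k+1},Z_k),
\]
split each monomial into its $f\t$ and $g\t B_k$ parts, and separate out the pieces with zero conditional expectation to obtain analogues of Lemmas \ref{Le5.3}--\ref{Le5.4} for \eqref{YYY_0}: the same martingale decomposition and bookkeeping of error terms as in the $\bar{\mathcal{V}}^{4}_{\delta_4}$ case, but with every occurrence of $V^{1-n\delta_4}$ in the derivative estimates replaced by $(1+V)^{1-n\delta_4}$.

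The heart of the argument is to prove the uniform bound
\[
|D^{(n)}V(Z_k)|\,|f(Z_k)|^{i}\,|g(Z_k)|^{2j}\;\leq\;Cw(Z_k)\,\t^{-(i+j)\hat{\theta}}\qquad(i+2j=n\geq 2),
\]
so that the $n$th-order Taylor contribution bounds by $Cw(Z_k)\t^{n-j-(i+j)\hat{\theta}}\leq Cw(Z_k)\t^{2(1-\hat{\theta})}$ (using $\hat{\theta}<1/2$). This is obtained by combining $|D^{(n)}V|\leq c(1+V)^{1-n\delta_4}$ with the truncation estimates \eqref{eqy5.31}, which produce the factor $(\Lambda_{\rho}V)^{n/2}(1+V)^{-(n/2)(1-2\delta_4)}$. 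After the exponents of $1+V$ are collected, one is left with $(\Lambda_{\rho}V)^{n/2}(1+V)^{1-n/2}$, and the elementary inequality
\[
V^{n/2}(1+V)^{1-n/2}\leq V\qquad(n\geq 2)
\]
(verified separately on $\{V\geq 1\}$ and $\{V<1\}$), together with $\Lambda_{\rho}^{n/2}\leq\Lambda_{\rho}$ and the defining estimate $\Lambda_{\rho}(x)V(x)\leq w(x)$, collapses the whole product to $Cw(Z_k)\t^{-(i+j)\hat{\theta}}$ as claimed.

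Armed with this estimate, I would take $\mathbb{E}_k[\cdot]$ in the Taylor expansion and invoke $\mathcal{L}V\leq -w$ to arrive at
\[
\mathbb{E}_k[V(\tilde{Z}_{k+1})]\leq V(Z_k)-\bigl(1-C\t^{1-2\hat{\theta}}\bigr)w(Z_k)\t.
\]
Using $V(\hat{\pi}_{\t}(x))\leq V(x)$ transfers the estimate to $V(Z_{k+1})$; choosing $\t^{**}\in(0,\t^{*}]$ so that $C(\t^{**})^{1-2\hat{\theta}}\leq 1-\gamma$ for some $\gamma\in(0,1)$ then gives $\mathbb{E}_k[V(Z_{k+1})]\leq V(Z_k)-\gamma w(Z_k)\t$. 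Rewriting the resulting chain as the nonnegative semimartingale
\[
V(Z_k)=V(x_0)-\gamma\t\sum_{i=0}^{k-1}w(Z_i)+\mathcal{M}_k,
\]
with $\mathcal{M}_k$ a martingale adapted to $\mathcal{F}_{t_k}$, Lemma \ref{LeY1} delivers $\lim_{k}V(Z_k)<\infty$ and $\sum_{i=0}^{\infty}w(Z_i)\t<\infty$ almost surely, hence $w(Z_k)\to 0$ and $\sup_{k}V(Z_k)<\infty$ a.s. Since $V\in\mathcal{C}_{\infty}^{4}$, this forces $\sup_{k}|Z_k|<\infty$ a.s., and the compactness/subsequence argument used verbatim at the end of Theorem \ref{th4.2} produces $\lim_{k}d(Z_k,\mathrm{Ker}(w))=0$ a.s., with the sharpening to $Z_k\to\mathbf{0}$ when $\mathrm{Ker}(w)=\{\mathbf{0}\}$ being immediate.

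The main obstacle is the derivative/growth bookkeeping isolated in the second paragraph: the $(1+V)^{1-n\delta_4}$ factor is strictly larger than $V^{1-n\delta_4}$ (most visibly near the origin), so the clean cancellation that drives the proof of Theorem \ref{th4.2} for $V\in\bar{\mathcal{V}}^{4}_{\delta_4}$ is no longer automatic and must be recovered from the combined geometry of \eqref{CA2} and the inequality $V^{n/2}(1+V)^{1-n/2}\leq V$. Once that single estimate is in hand, the remainder of the proof is a transcription of the $\rho=1$ specialization of Theorem \ref{th4.2}.
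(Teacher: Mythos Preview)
Your proposal is correct and follows essentially the same route as the paper, which for $\rho=1$ applies the Taylor expansion and the truncation bound \eqref{eqy5.31} to reach $V(\tilde Z_{k+1})\le V(Z_k)-(1-C\t^{1-2\hat\theta})w(Z_k)\t+\sum_i\mathcal{S}_i^{\t}V(Z_k)+\mathcal{A}_1^{\t}V(Z_k)$ and then defers verbatim to the end of the proof of Theorem \ref{th4.2}; your inequality $V^{n/2}(1+V)^{1-n/2}\le V$ makes explicit precisely the exponent cancellation the paper leaves to the reader. One simplification you can take: since $\rho=1$ there is no $(1+\zeta_k)^{\rho}$ expansion, so the analogue of Lemma \ref{Le5.4} (the $\zeta_k^2$, $\zeta_k^3$ bookkeeping) is unnecessary---only the first-order decomposition of Lemma \ref{Le5.3} is used.
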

\begin{proof}
For $\rho=1$, making use of the techniques in the proof of Lemma \ref{Le5.3} as well as \eqref{eqy5.31}, \eqref{w1}  yields
\begin{align}
 V(\tilde{Z}_{k+1})
%\!\leq& V(Z_{k})+\mathcal{L}V(Z_{k})\t
%+\mathcal{R}^{\t}V(Z_k)+\sum_{i=1}^{3}\mathcal{S}_{i}^{\t}V(Z_k)  +J(\tilde{Z}_{k+1}, Z_k) \nn\\
\!\leq& V(Z_{k})\!+\!\mathcal{L}V(Z_{k})\t
\!+C\Lambda_{1}(Z_k)  V(Z_k)  \t^{2(1\!-\hat{\theta})} \!+\!\sum_{i=1}^{3}\mathcal{S}_{i}^{\t}V(Z_k) \! + \mathcal{A}_1^{\t}V(Z_k)\nn\\
\leq& V(Z_{k})-\big(1-C\t^{1-2\hat{\theta}}\big)w(Z_{k})\t
 +\sum_{i=1}^{3}\mathcal{S}_{i}^{\t}V(Z_k)  + \mathcal{A}_1^{\t}V(Z_k),
\end{align}
where $\mathbb{E}_k\big[\mathcal{S}_{i}^{\t}V(Z_k)\big]=0$ and $\mathbb{E}_k\big[\mathcal{A}_1^{\t}V(Z_k)\big]=0$. Using the same method as employed in the proof of Theorem \ref{th4.2}, we can easily carry out the proof of this theorem and hence is omitted to avoid repetition.
\end{proof}
Based on the above theorem, we further obtain the  following corollary.
\begin{corollary}\la{coco4.3}
Assume that the conditions of Theorem \ref{yTh4.2} hold and, moreover,  $w(x)\geq \mu V(x)$
 for all $x\in \mathbb{R}^d$, where $\mu$ is a positive constant.
Then for any    $\varrho\in(0,  \mu )$ there is a constant $ {\t}^{**}\in(0,\t^*]$  such that  for any $\t\in(0,  {\t}^{**}]$ the $V$-truncated EM scheme   \eqref{YY_0} has the   property that
\begin{align*}
 \limsup_{k\rightarrow \infty}\frac{\log\big(\E V(Z_{k})\big)}{k\t} \leq  -  \big(\mu-\varrho\big) <0,
\end{align*}
and
\begin{align*}
  \limsup_{k\rightarrow \infty}\frac{\log\big(V(Z_{k})\big) }{k\t}
   \leq-\big(\mu-\varrho\big)\quad a.s.
\end{align*}
  \end{corollary}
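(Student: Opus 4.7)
The plan is to mirror the strategy of Corollary \ref{coco4.2} with $\rho=1$, but starting from the one-step contraction inequality established in the proof of Theorem \ref{yTh4.2} for the scheme \eqref{YYY_0}. First I would extract from the proof of Theorem \ref{yTh4.2} the conditional estimate
\begin{align*}
\mathbb{E}_k\bigl[V(\tilde{Z}_{k+1})\bigr]
\leq V(Z_{k}) - \bigl(1 - C\t^{1-2\hat{\theta}}\bigr) w(Z_{k})\,\t,
\end{align*}
which is valid because the martingale terms $\mathcal{S}_{i}^{\t}V(Z_k)$ and $\mathcal{A}_1^{\t}V(Z_k)$ have vanishing conditional expectation. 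Invoking the standing hypothesis $w(x) \geq \mu V(x)$ and the elementary fact $V(\hat{\pi}_\t(x)) \leq V(x)$ (so that $V(Z_{k+1})\leq V(\tilde{Z}_{k+1})$), this yields
\begin{align*}
\mathbb{E}_k\bigl[V(Z_{k+1})\bigr]
\leq V(Z_k)\bigl(1 - \mu\t + C\mu\,\t^{2(1-\hat{\theta})}\bigr).
\end{align*}

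Next, given $\varrho \in (0,\mu)$, I would choose $\t^{**}\in(0,\t^*]$ small enough that $C\mu(\t^{**})^{1-2\hat{\theta}} \leq \varrho$ and $(\mu-\varrho)\t^{**}<1$; for every $\t\in(0,\t^{**}]$ this collapses the previous bound to
\begin{align*}
\mathbb{E}_k\bigl[V(Z_{k+1})\bigr]
\leq \bigl(1 - (\mu-\varrho)\t\bigr)V(Z_k).
\end{align*}
Taking unconditional expectations and iterating from $Z_0 = x_0$ gives
\begin{align*}
\mathbb{E}\bigl[V(Z_{k})\bigr]
\leq \bigl(1 - (\mu-\varrho)\t\bigr)^{k} V(x_0)
\leq \exp\bigl(-(\mu-\varrho)k\t\bigr) V(x_0),
\end{align*}
from which $\limsup_{k\to\infty} k^{-1}\t^{-1}\log \mathbb{E}[V(Z_k)] \leq -(\mu-\varrho)$ is immediate.

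For the almost sure bound, I would follow the standard Chebyshev--Borel--Cantelli argument used at the end of Corollary \ref{coco4.2}. For arbitrary $\epsilon\in(0,\mu-\varrho)$, Markov's inequality combined with the moment bound above gives
\begin{align*}
\mathbb{P}\bigl( V(Z_k) > \exp(-(\mu-\varrho-\epsilon)k\t)\bigr)
\leq V(x_0)\,\exp(-\epsilon k\t),
\end{align*}
which is summable in $k$; Borel--Cantelli then implies $\log V(Z_k)/(k\t) \leq -(\mu-\varrho-\epsilon)$ for all sufficiently large $k$, almost surely. Letting $\epsilon\downarrow 0$ gives the a.s.\ conclusion. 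The only technical point demanding care is verifying that the one-step bound from Theorem \ref{yTh4.2} survives the transition from $\bar{\mathcal{V}}^{4}_{\delta_4}$ to the slightly larger class $\hat{\mathcal{V}}^{4}_{\delta_4}$ and the associated truncation $\hat{\pi}_\t$, but this is already handled in the proof of Theorem \ref{yTh4.2}, so the rest is a routine exponential iteration.
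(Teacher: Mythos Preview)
Your proposal is correct and follows essentially the same approach the paper uses (implicitly, since no proof is written out for this corollary): you specialize the argument of Corollary \ref{coco4.2} to $\rho=1$, starting from the one-step inequality displayed in the proof of Theorem \ref{yTh4.2}, then iterate and apply Chebyshev--Borel--Cantelli for the almost sure statement. The only cosmetic difference is that the paper in Corollary \ref{coco4.2} first writes the bound as $\mathbb{E}_k[V^{\rho}(\tilde{Z}_{k+1})]\leq V^{\rho}(Z_k)+CV^{\rho}(Z_k)\t^{2(1-\bar{\theta})}-\mu V^{\rho}(Z_k)\t$ before choosing $\t^{**}$, whereas you absorb the $\mu$ into the remainder term; both lead to the same contraction factor $1-(\mu-\varrho)\t$.
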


\section{Examples and simulations}\la{exa}

\begin{example}\la{expx4.0}
Consider the two-dimensional nonlinear SDE
\begin{align}\la{exp5.0}
\left\{
\begin{array}{ll}
\mathrm{d}X_1(t)= \big(-2X^3_1(t)-2X^2_2(t)X_1(t)\big) \mathrm{d}t+ 2\sqrt{2}\big(X^2_1(t)+X^2_2(t)\big) \mathrm{d}B^{(1)}(t), \\
\mathrm{d}X_2(t)=\big(-2X^2_1(t)X_2(t)-2X^3_2(t)\big) \mathrm{d}t+ 2\sqrt{2}\big(X^2_1(t)+X^2_2(t)\big) \mathrm{d}B^{(2)}(t),
\end{array}
\right.
\end{align}
with the initial value $x_0=(1,\sqrt{3})^T$, where $B(t)=(B^{(1)}(t), B^{(2)}(t))^T$ is a two-dimensional Brownian motion. Obviously, its drift and diffusion coefficients
\begin{align*}
f(x)=\left[
  \begin{array}{ccc}
    -2x^3_1 -2x^2_2 x_1 \\
   -2x^2_1 x_2 -2x^3_2 \\
  \end{array}
\right],\qquad g(x)=\left[
  \begin{array}{ccc}
    2\sqrt{2}\big(x^2_1 +x^2_2 \big) &0  \\
  0&   2\sqrt{2}\big(x^2_1 +x^2_2 \big) \\
  \end{array}
\right].
\end{align*}
are locally Lipschitz continuous for any $x\in \mathbb{R}^2$.
 In this case one can set the Lyapunov function to be    $V(x)=|x|^2,$ which is from a broader class $V\in  \bar{\mathcal{V}}^{2}_{1/2}$ with $0<\rho<1/4$. Then one observes that
\begin{align*}
\mathcal{L}V^{\rho}(x)
%=&\frac{\rho}{2}V^{\rho-2}(x)\Big[2V(x)\mathcal{L}V(x)+(\rho-1)\big|\langle D^{(1)}V(x), g( x)\rangle\big|^2\Big]\nn\\
=\frac{\rho}{2}V^{\rho-2}(x)\big(24|x|^6+32(\rho-1)|x|^6\big)
=4\rho  \big(4\rho-1\big)|x|^{2\rho+2}=:-w(x).
\end{align*}
We choose $\rho=1/8$, then $w(x)=1/4|x|^{2\rho+2}$ and $w(x)=0$ iff $x=\mathbf{0}\in \mathbb{R}^2$.
 Note that in this case the exact solution  $X(t)$ of SDE \eqref{exp5.0} is not necessarily moment exponentially stable, but   the exact solution  $X(t)$ tends to  $\mathbf{0}$ almost surely, see Lemma \ref{th4.1}.
 To the best of our knowledge the  numerical methods in the literatures such as \cite{Guo2017,Li18,S7,Lukasz17} cannot treat this case. However, the performance of the $V$-truncated EM scheme (\ref{YY_0}) is very nice for this case, see Fig.  \ref{exp5_0fig1}.
%
% One observes that the schemes proposed in \cite{Guo2017,Li18,S7,Lukasz17}
%  are quite sensitive to
% condition \eqref{w1} as well as the Lyapunov function $V^{\rho}(x)$, which don't work for the above equation. However, the performance of the $V$-truncated EM scheme (\ref{YY_0}) is very nice for this case.

\begin{figure}[!htp]
\vspace{-1mm}
\includegraphics[angle=0, height=4cm, width=8cm]{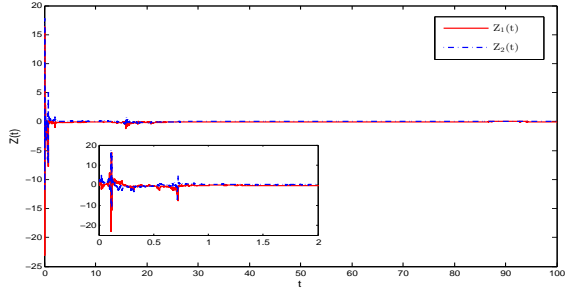}  % The printed column
\caption{Sample path of  the $V$-truncated EM  numerical solution $Z(t)$ with the initial value $x_0=(1, \sqrt{3})^T$ for  stepsize  $\t=10^{-4}$ and $t\in [0, 100]$.}  \label{exp5_0fig1}
\vspace{-1mm}
\end{figure}

On the other hand,  in order to represent the simulations of the scheme (\ref{YY_0}), we divide it into three steps.
\\
{\bf Step 1.} Examine the hypothesis. Since that
\begin{align*}
 \sup_{0<|x|\leq u} \frac{ |f(x)|^2\vee |g(x)|^2 }{\Lambda_{\rho}(x)V^{2 \delta_4}(x)}= \sup_{0<|x|\leq u}\frac{ 4|x|^4\vee 16|x|^2 }{\big(1/4|x|^2\big)\wedge 1 }< +\infty,
 \end{align*}
which implies condition  \eqref{CA1} is satisfied. \\
{\bf Step 2.} Choose $\bar{\varphi}(\cdot)$ and $\bar{\theta}$. For any $x\in \mathbb{R}^2$, compute
\begin{align*}
  \sup_{0<|x|\leq u}    \bigg[ \frac{ |f( x)|}{\Lambda_{\rho}^{1/2}(x) V^{ \delta_4}(x)}
 \vee  \frac{ |g(x)|^2}{\Lambda_{\rho}(x)V^{2 \delta_{4}}(x)} \bigg]
%  =&\sup_{0<|x|\leq u}   \bigg[ \frac{ |x|^3}{\big(\sqrt{2}/4|x|\wedge 1\big) |x|}
% \vee  \frac{ 4|x|^4}{\big(1/8|x|^2\wedge 1\big)|x|^2} \bigg] \nn\\
=&\sup_{0<|x|\leq u}  \bigg[ \frac{4|x|^2}{ |x|\wedge 2  }
 \vee  \frac{ 64|x|^2}{  |x|^2\wedge 4 } \bigg]\nn\\
 \leq& 16(u+2)^2=:\bar{\varphi}(u) \qquad\forall~ u>0.
 \end{align*}
Then
$\bar{\varphi}^{-1}(u)=0.25\sqrt{u}-2$, $\forall u>64$. Fix a number $\t^*=10^{-4}$ and define  $K\t^{-\bar{\theta}}:=\bar{\varphi}(|x_0|\vee 1)\t^{-0.4}$  for any $\t\in(0,\t^*]$.
\\
 {\bf Step 3.} Construct an explicit scheme. For a fixed $\t\in(0,\t^{*}]$,   the $V$-truncated EM scheme for \eqref{exp5.0} is
\begin{align}
\left\{
\begin{array}{ll}
Z_0=x_0,&\\
\tilde{Z}_{k+1}=Z_k+ f(Z_k) \t + g(Z_k) \t B_k,& \\
Z_{k+1}=\Big[|\tilde{Z}_{k+1}|\wedge\big(4 \t^{-0.2} -2\big)\Big]\frac{\tilde{Z}_{k+1}}{|\tilde{Z}_{k+1}|}.
\end{array}
\right.
\end{align}
Therefore, by virtue of Theorem \ref{th4.2}, using this scheme we can preserves the underlying stability perfectly, see Fig.  \ref{exp5_0fig1}.
 \end{example}

\begin{example}\la{expx4.1}{\rm
Consider the scalar SDE
\begin{align} \la{xeq7.1}
 \mathrm{d}X(t)=\big(-0.5X(t)- X^3(t) \big)\mathrm{d}t+ X(t) \mathrm{d}B(t),
\end{align}
with the initial value $x_0=19$, where $B(t)$ is a scalar Brownian motion. Obviously, the drift and diffusion coefficients are locally Lipschitz continuous. In this case one can set the Lyapunov function to be   $V(x)=|x|^2,$  which is from a broader class $V\in  \bar{\mathcal{V}}^{2}_{1/2}$ with $0<\rho<1$. Then one observes that
\begin{align*}
\mathcal{L}V^{\rho}(x)=&\frac{\rho}{2}V^{\rho-2}(x)
\Big[2V(x)\mathcal{L}V(x)+(\rho-1) | D^{(1)}V(x)  g(x)|^2\Big]\nn\\
=&\frac{\rho}{2}V^{\rho-2}(x)\big(-4|x|^6+4(\rho-1)|x|^4\big)\leq -2\rho(1-\rho)V^{\rho}(x).
\end{align*}
  By virtue of Theorem \ref{th1} and Corollary \ref{coco4.1},  equation \eqref{xeq7.1} with any initial value $x_0>0$ has a unique regular solution $X(t)$, which is exponentially  stable for $0<\rho<1$. It can be verified that  condition  \eqref{CA1} holds. Note that for all $u > 0$,
\begin{align*}
\sup_{0<|x|\leq u}  \bigg\{ \frac{|f (x)|}{|x|}
 \vee  \frac{|g(x)|^2}{|x|^2} \bigg\}\leq\sup_{0<|x|\leq u}  \bigg\{ \frac{|x|+|x|^3}{|x|}
 \vee  \frac{|x|^2}{|x|^2} \bigg\}\leq u^2+1.
 \end{align*}
Take $\bar{\varphi}(u)= u^2+1$, $\forall u>0$, then
$\bar{\varphi}^{-1}(u)=\sqrt{ u-1}$, $\forall u>1$. Fix a number $\t^*=0.008$ and  define $K\t^{-\bar{\theta}}:=110\t^{-1/4}$ for any $\t\in(0,\t^*]$. For a fixed $\t\in(0,\t^{*}]$, the $V$-truncated EM scheme for \eqref{xeq7.1} is
\begin{align*}
\left\{
\begin{array}{ll}
Z_0=x_0,&\\
\tilde{Z}_{k+1}=Z_k-\big(0.5  Z_k +Z_k^3\big)\t + Z_k \t B_k,& \\
Z_{k+1}=\Big(|\tilde{Z}_{k+1}|\wedge\sqrt{  110\t^{-1/4}-1}\Big)\frac{\tilde{Z}_{k+1}}{|\tilde{Z}_{k+1}|}.
\end{array}
\right.
\end{align*}
Therefore, by virtue of Theorem \ref{T:C_2}, using this scheme we can approximate the exact solution
in the $p$th moment with $p<2\rho$.

\begin{figure}[!htp]
\includegraphics[angle=0, height=3.2cm, width=10cm]{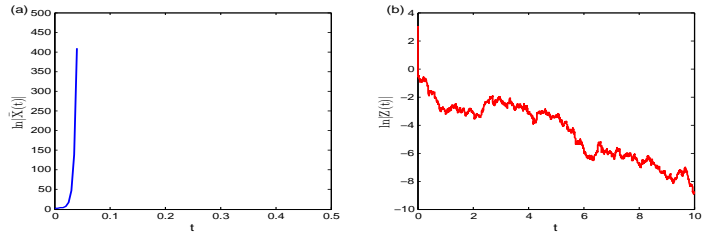}  % The printed column
\caption{ (a) Sample path   of  the classical EM numerical solution $\ln|\bar{X}(t)|$, (b) Sample path of  the $V$-truncated EM numerical solution $\ln|Z(t)|$ with the same initial value $x_0=19$ for  stepsize  $\t=0.005$ and $t\in [0, 10]$.}  \label{exp5_1fig1}\vspace{-2mm}                                 % Size the figures
\end{figure}

As a fairly well-known result (see, e.g., \cite{Higham1}), the class EM numerical solution for a nonlinear
stable SDE  is unstable in a positive probability. However, Corollary \ref{coco4.2} reveals that the
$V$-truncated EM solution $Z(t)$  preserves the underlying stability perfectly, see Fig. \ref{exp5_1fig1}.

%\begin{figure}
%% Use the relevant command to insert your figure file.
%% For example, with the graphicx package use
%  \includegraphics[angle=0, height=5cm, width=13cm]{figure2.pdf}
%% figure caption is below the figure
%\caption{Please write your figure caption here}
%\label{fig:1}       % Give a unique label
%\end{figure}
%
%

Fig. \ref{exp5_1fig1} gives sample path  of the classical EM solution $\ln|\bar{X}(t)|$ and of the $V$-truncated EM solution $\ln|Z(t)|$ with the same initial value
$x_0=19$ for  stepsize $\t=0.005$ and $t\in[0, 10]$. Fig. \ref{exp5_1fig1}(a) displays that the classical EM solution blows up quickly,
so it cannot capture the stability behavior of SDE \eqref{xeq7.1}. Fig. \ref{exp5_1fig1}(b) displays clearly that the truncated EM solution reproduces
the almost sure stability of SDE \eqref{xeq7.1}.
}\end{example}

\begin{example}\la{exp8.1}{\rm
Consider a stochastic Duffing-van der Pol oscillator (see \cite[p.81]{Hutzenthaler15})
\begin{align}\la{ex8.1}
\! \ddot{z}(t) \!+3{z}(t)+2\dot{z}(t)\!+2\dot{z}(t) z^2(t)\! + z^3(t) \!=\sqrt{2}z(t)\mathrm{d}B^{(1)}(t) \!+\sqrt{2.5}\dot{z}(t)\mathrm{d}B^{(2)}(t)
\end{align}
for $t\in \mathbb{R}_+$, where $B(t)=[B^{(1)}(t), B^{(2)}(t)]^T$. Introducing a new variable $(x_1, x_2)^T=(z, \dot{z})^T$,  we can write this Duffing-van der Pol oscillator as a two-dimensional SDE with drift and diffusion coefficients
\begin{align} \la{xeq8.1}
f(x)=\left[
  \begin{array}{ccc}
    x_2\\
   -3x_1-2 x_2-2 x_2x_1^2-x_1^3\\
  \end{array}
\right],\qquad g(x)=\left[
  \begin{array}{ccc}
    0 &0  \\
  \sqrt{2} x_1&   \sqrt{2.5}x_2 \\
  \end{array}
\right].
\end{align}
Obviously, its coefficients are locally Lipschitz continuous for any $x\in \mathbb{R}^2$. Therefore, Lemma \ref{th4.1} applies here with the Lyapunov-type function $V:\mathbb{R}^2\rightarrow \mathbb{R}_+$ given by
 $V(x)= x_1^4+ x_2^2+  x_1x_2+4 x_1^2, $
which is from a broader class $\hat{\mathcal{V}}^{4}_{1/4}$ and
\begin{align*}
\mathcal{L}V(x)
%%=&\langle D^{(1)} V(x), f(x)\rangle+\frac{1}{2} \hbox{tr}\Big[g^T(x) D^{(2)}V(x)g(x)\Big]\nn\\
%  =&\left[
%  \begin{array}{ccc}
%    4  x_1^3+   x_2+8x_1& 2  x_2+ x_1\\
%  \end{array}
%\right]\left[
%  \begin{array}{ccc}
%    x_2\\
%   -3x_1-2 x_2-2 x_2x_1^2-x_1^3\\
%  \end{array}
%\right]\nn\\
%&+\frac{1}{2}\mathrm{tr}\Bigg\{ \left[
%  \begin{array}{ccc}
%    0 &0  \\
%   \sqrt{2} x_1&   x_2 \\
%  \end{array}
%\right]^T\left[
%  \begin{array}{ccc}
%   12  x_1^2+8 & 1\\
%   1 & 2 \\
%  \end{array}
%\right]\left[
%  \begin{array}{ccc}
%    0 &0  \\
%   \sqrt{2} x_1&   x_2 \\
%  \end{array}
%\right]\Bigg\}\nn\\
= &-4x_1^2x_2^2 - x_1^2-0.5x_2^2- x_1^4 \leq -0.5|x|^2=:-w(x).
\end{align*}
Note that in this case the solution is not necessarily $p$th moment exponentially stable for $p\geq 2$, but Lemma \ref{th4.1} still holds. We can then conclude that the SDE \eqref{ex8.1} has the property that
 $$
 \lim_{t\rightarrow \infty} [|z(t)|+|\dot{z}(t)|] =0\qquad \mathrm{a.s.}
$$
On the other hand,  in order to represent the simulations of the scheme (\ref{YY_0}), we divide it into three steps.
\\
{\bf Step 1.} Examine the hypothesis.
Since that $V(x)\geq w(x)$ for any $x\in \mathbb{R}^2$ and
\begin{align*}
\sup_{|x|\leq N}\big(|f(x)|^2\vee |g(x)|^2\big)
%=&\sup_{|x|\leq N}\bigg\{\Big[x_2^2+\big(3x_1+2 x_2+2 x_2x_1^2\!+x_1^3\big)^2\Big]\vee \Big[2 x_1^2\!+ x_2^2\Big]\bigg\} \nn\\
\leq   C_N w(x), \qquad\forall N>0,
\end{align*}
%thus,
%$
%   \sup_{|x|\leq N} \big(|f(x)|^2\vee |g(x)|^2\big)/ w(x) <+\infty,
%$
which implies condition  \eqref{CA2} is satisfied. \\
{\bf Step 2.} Choose $\hat{\varphi}(\cdot)$  and $\hat{\theta}$.  For any $x\in \mathbb{R}^2$, compute
\begin{align*}
%&\sup_{0<|x|\leq u}  \Bigg\{ \bigg[\frac{\big(\varsigma+V(x)\big)^{1+\bar{\rho}-2\delta} |f(t, x)|^2}{\big(w  (x)\wedge  \mu V^{\rho}(x)\big)V^{1+\bar{\rho}-\rho}(x)}\bigg]^{\frac{1}{2}}
% \vee  \frac{\big(\varsigma+V(x)\big)^{1+\bar{\rho}-2\delta} |g(t, x)|^2}{\big(w  (x)\wedge  \mu V^{\rho}(x)\big)V^{1+\bar{\rho}-\rho}(x)} \Bigg\}\nn\\
 &\sup_{0<|x|\leq u}  \left[\frac{ \big(1+V(x)\big)^{1/4}|f(x)| }{  \sqrt{0.5}|x| }\vee
 \frac{\big(1+V(x)\big)^{1/2}|g(x)|^2}{ 0.5|x|^2  } \right]\nn\\
%\leq &\sup_{0<|x|\leq u}  \left[\frac{ \big(1+V(x)\big)^{1/4}\big[x_2^2+4\big(9x_1^2+4 x_2^2+4 x_2^2x_1^4+x_1^6\big)\big]^{\frac{1}{2}} }{  |x| }\vee
% 2\big(1+V(x)\big)^{1/2} \right]\nn\\
%%= &\sup_{0<|x|\leq u}  \left[\frac{ \big(1+V(x)\big)^{1/4}\big(36x_1^2+17 x_2^2+16x_2^2x_1^4+4x_1^6\big)^{\frac{1}{2}} }{  |x| }\vee
%% 2\big(1+V(x)\big)^{1/2} \right]\nn\\
%% = &\sup_{0<|x|\leq u}  \left[\frac{ \big(1+V(x)\big)^{1/4}\big(36x_1^2+17 x_2^2+16|x|^2x_1^4-12x_1^6\big)^{\frac{1}{2}} }{  |x| }\vee
%% 2\big(1+V(x)\big)^{1/2}\right]\nn\\
\leq& 5\big(8u^4+21\big)^{1/4}\big(36+16u^4)^{\frac{1}{2}}\leq   \big(36+16u^4)^{\frac{3}{4}}=:\hat{\varphi}(u), \qquad\forall~u\geq 1,
\end{align*}
which implies $\hat{\varphi}^{-1}(u)=0.5\big(u^{\frac{4}{3}}-36\big)^{\frac{1}{4}}~\forall~u\geq 20$. Let $K\t^{-\hat{\theta}}=\hat{\varphi}(|x_0|\vee 1)\t^{-0.4}$.\\
{\bf Step 3.} Construct an explicit scheme. For a fixed $\t\in (0,1)$, the $V$-truncated EM scheme for \eqref{ex8.1} is
\begin{align*}
\left\{
\begin{array}{ll}
Z_0=x_0,&\\
\tilde{Z}_{k+1}=Z_k+f(Z_k)\t +g(Z_k)\t B_k,& \\
Z_{k+1}=\Big(|\tilde{Z}_{k+1}|\wedge \big(|x_0|\vee 1\big)\t^{-0.4}\Big)\frac{\tilde{Z}_{k+1}}{|\tilde{Z}_{k+1}|}.
\end{array}
\right.
\end{align*}
By virtue of Theorem \ref{T:C_2}, using this scheme we can approximate the exact solution
in the mean square sense.
Moreover, by Theorem \ref{yTh4.2}, we can  conclude that the $V$-truncated EM scheme (\ref{YYY_0}) has the property that
 $$
 \lim_{k\rightarrow \infty} \big[|Z_{1,k}|+|Z_{2,k}|\big] =0\qquad\mathrm{a.s.}
$$
}\end{example}

\section{Concluding remarks}\la{concluding}
 This paper deals  with numerical solutions of nonlinear SDEs that are flexible enough to the stochastic Lyapunov method. We have constructed two explicit numerical schemes of SDEs in which their drift and diffusion coefficients are not globally Lipschitz but  grow faster than linearly.  A novelty of this paper, is to construct two explicit schemes in approximating the dynamical properties of SDEs with respect to a larger class of Lyapunov functions. By using one scheme in the  finite time interval, we obtained convergence and $V$-integrability of the numerical solutions under local Lipschitz condition and the structure conditions required by the exact solutions.  Moreover, the convergence rate (see Theorem \ref{lemma+4}) is also obtained under certain conditions which extends the results in the related literature. On the other hand, in the infinite time interval  we used the other scheme to produce the well-known LaSalle-type theorem of SDEs, from which we deduced the asymptotic stability of numerical solutions. Some simulation and examples are provided to support the theoretical results and demonstrate the validity of our approaches.

\section*{Appendix A.}\la{appendix}
In this appendix, we  will provide  the proofs of some results in Sections \ref{3s-c} and \ref{5s-b}.

{\bf  Proof of Lemma \ref{le1.3}.}~~First of all, note that $Y_k$ is $\mathcal{F}_{t_k}$-measurable, we have
\begin{align*}%\la{f4.1}
&\sum_{|\alpha|=1}\frac{D^{\alpha} V (Y_k)}{\alpha!}\big(\tilde{Y}_{k+1}-Y_k\big)^{\alpha}
=\sum_{|\alpha|=1}\frac{D^{\alpha} V (Y_k)}{\alpha!}\big(f(Y_k)\t +g( Y_k)\t B_k\big)^{\alpha}\nn\\
=&\sum_{|\alpha|=1}\frac{D^{\alpha} V (Y_k)}{\alpha!}\big(
f(Y_k)\t\big)^{\alpha} +\sum_{|\beta|=1}\frac{D^{\beta} V (Y_k)}{\beta!}\big(g(Y_k)\t B_k\big)^{\beta}\nn\\
=& \langle   D^{(1)} V(Y_k), f(Y_k)\rangle \t+\mathcal{S}_{1}^{\t}V(Y_k),
\end{align*}
where \begin{align}\la{fA.1}
\dis\mathcal{S}_{1}^{\t}V(Y_k):=\langle D^{(1)}V(Y_k), g(Y_k)\t B_k\rangle.\tag{A.1}\end{align}  Moreover,
\begin{align*}
 &\sum_{|\alpha|=2}\frac{D^{\alpha} V (Y_k)}{\alpha!}\big(\tilde{Y}_{k+1}-Y_k\big)^{\alpha}\nn\\
=& \sum_{|\alpha|=2}\frac{D^{\alpha} V (Y_k)}{\alpha!}\big(
f( Y_k)\t\big)^{\alpha}+\sum_{|\alpha|=2,|\beta|=1 \atop \alpha\geq \beta}\frac{D^{\alpha} V (Y_k)}{\beta!(\alpha-\beta)!}\big(f( Y_k)\t\big)^{\alpha-\beta}\big(g( Y_k)\t B_k\big)^{\beta}\nn\\
  &+\sum_{|\beta|=2}\frac{D^{\beta} V (Y_k)}{\beta!} \big(g(Y_k)\t B_k\big)^{\beta} \nn\\
  =& \frac{1}{2}  \langle f(Y_k), D^{(2)}V(Y_k) f( Y_k)\rangle\t^2+\!\!\!\sum_{|\alpha|=2,|\beta|=1 \atop \alpha\geq \beta}\frac{D^{\alpha} V (Y_k)}{\beta!(\alpha-\beta)!}\big(f( Y_k)\t\big)^{\alpha-\beta}\big(g(Y_k)\t B_k\big)^{\beta}\nn\\
  &+\frac{1}{2} \hbox{tr}\Big[\big(g(Y_k)\t B_k\big)^TD^{(2)}V(Y_k)\big(g(Y_k)\t B_k\big)\Big] \nn\\
 =:& \frac{1}{2}  \langle f(Y_k), D^{(2)}V(Y_k) f( Y_k)\rangle\t^2+\frac{1}{2} \hbox{tr}\Big[g^T(Y_k)D^{(2)}V(Y_k)g( Y_k)\Big]\t +\mathcal{S}_{2}^{\t}V(Y_k)\nn\\
\leq& \left| D^{(2)}V(Y_k)\right| |f(Y_k)|^2\t^2+\frac{1}{2} \hbox{tr}\Big[g^T(Y_k)D^{(2)}V(Y_k)g(Y_k)\Big]\t +\mathcal{S}_{2}^{\t}V(Y_k) ,
\end{align*}
where
%\begin{align}\la{fA.2}
%\mathcal{R}_{2}^{\t}V(Y_k):=\sum_{|\alpha|=2}\frac{D^{\alpha} V (Y_k)}{\alpha!}\big(f(t_k, Y_k)\t \big)^{\alpha}=\frac{1}{2}  \langle f(t_k, Y_k), D^{(2)}V(Y_k) f(t_k, Y_k)\rangle\t^2,\tag{A.2}
%\end{align}
%and
\begin{align}\la{fA.3}
\mathcal{S}_{2}^{\t}V(Y_k) :=&\sum_{|\alpha|=2,|\beta|=1\atop \alpha\geq \beta}\frac{D^{\alpha} V (Y_k)}{\beta!(\alpha-\beta)!}\big(
f(Y_k)\t\big)^{\alpha-\beta}\big(g(Y_k)\t B_k\big)^{\beta}\nn\\
  &+\frac{1}{2} \hbox{tr}\Big[D^{(2)}V(Y_k) g(Y_k)\big(\t B_k\t B_k^T-\mathbb{I}_m\t\big) g^T(Y_k) \Big],\tag{A.2}
\end{align}
and $\mathbb{I}_m$ denotes the $m\times m$ identity matrix. The fact that
$\t B_k$ is independent of $\mathcal{F}_{t_k}$ implies that
\begin{align}\la{L_2+}  \mathbb{E}_k\big[\triangle B_k\big]=\mathbb{E}\big[\triangle B_k|\mathcal{F}_{t_k}\big]=\mathbb{E}(\triangle B_k)=\mathbf{0}\in \mathbb{R}^m,\quad \mathbb{E}_k\big[\t B_k\t B_k^T\big]=\mathbb{I}_m \t.\tag{A.3}
\end{align}
Hence
$
\mathbb{E}_k\big[\mathcal{S}_{i}^{\t}V(Y_k)\big]=0
$ for $i=1, 2$.
We can now analyse the rest of the expansion for $|\alpha|=3$, we have
\begin{align}\la{A.4}
&\sum_{|\alpha|=3}\frac{D^{\alpha} V (Y_k)}{\alpha!}\big(\tilde{Y}_{k+1}-Y_k\big)^{\alpha} \tag{A.4}\\
=&\sum_{|\alpha|=3}\frac{D^{\alpha} V (Y_k)}{\alpha!} \big(f(Y_k) \big)^{\alpha} \t^3
   +\!\!\!\sum_{|\alpha|=3,|\beta|=1 \atop \alpha\geq \beta}\frac{D^{\alpha} V (Y_k)}{\beta!(\alpha-\beta)!}\big(f(Y_k) \big)^{\alpha-\beta} \big(g(Y_k)\t B_{k} \big)^{\beta}\t^2\nn\\
&\!\!+\!\!\!\!\sum_{|\alpha|=3,|\beta|=2 \atop \alpha\geq \beta}\frac{D^{\alpha} V (Y_k)}{\beta!(\alpha-\beta)!}\big(f(Y_k) \big)^{\alpha-\beta}  \big(g( Y_k)\t B_{k} \big)^{\beta} \t
 +\!\sum_{|\beta|=3}\frac{D^{\beta} V (Y_k)}{\beta!} \big(g(Y_k)\t B_{k} \big)^{\beta}\nn\\
 \leq
 &\!\!\sum_{|\alpha|=3,|\beta|=1 \atop \alpha\geq \beta}\frac{D^{\alpha} V (Y_k)}{\beta!(\alpha-\beta)!}\big(f(Y_k) \big)^{\alpha-\beta} \big(g( Y_k)\t B_{k} \big)^{\beta}\t^2
 +\!\!\sum_{|\beta|=3}\!\frac{D^{\beta} V (Y_k)}{\beta!} \big(g(Y_k)\t B_{k} \big)^{\beta}\nn\\
&+ C|D^{(3)} V (Y_k)|\Big( |f(Y_k)|^3\t^3 +|g(Y_k)|^2|f(Y_k)||\t B_{k}|^2 \t \Big)
.\nn
\end{align}
 Using  the properties
\begin{align}\la{E_k}
\mathbb{E}_k\big[|\t B_k|^{i}\big]=K_{i}\t^{\frac{i}{2}},\qquad i=1, 2, 3, \ldots\tag{A.5}
\end{align}
where $K_i$ is a positive constant  dependent on $i$, as well as \eqref{A.4} yields
\begin{align*}
&\sum_{|\alpha|=3}\frac{D^{\alpha} V (Y_k)}{\alpha!}\big(\tilde{Y}_{k+1}-Y_k\big)^{\alpha}\nn\\
 \leq&C|D^{(3)} V (Y_k)|\t^2\Big( |f(Y_k)|^3\t + |g(Y_k)|^2|f(Y_k)|\Big)+\mathcal{S}_{3}^{\t}V(Y_k),
\end{align*}
and $
\mathbb{E}_k\big[\mathcal{S}_{3}^{\t}V(Y_k)\big]=0$, where
\begin{align}\la{fA.5}
  \mathcal{S}_{3}^{\t}V(Y_k)
:= & \sum_{|\alpha|=3,|\beta|=1 \atop \alpha\geq \beta}\frac{D^{\alpha} V (Y_k)}{\beta!(\alpha-\beta)!}\big(f(Y_k) \big)^{\alpha-\beta} \big(g(  Y_k)\t B_{k} \big)^{\beta}\t^2\nn\\
& +C|D^{(3)} V (Y_k)||g(Y_k)|^2|f(Y_k)|\big(|\t B_{k}|^2 -K_2\t\big)\t\nn\\
& + \sum_{|\beta|=3} \frac{D^{\beta} V (Y_k)}{\beta!} \big(g(Y_k)\t B_{k} \big)^{\beta}. \tag{A.6}
\end{align}
The proof is therefore complete.\qed

{\bf  Proof of Lemma \ref{Z:1}.}~~First of all, note  that    $Y_{k}$  is $\F_{t_k}$-measurable.
 Then, using \eqref{1e1.2},   \eqref{Y_01} as well as  the estimates of $|D^{(n)}V(\cdot)|$,
  we derive that
\begin{align*}
\big| \mathcal{L}V(Y_{k})\big|
\leq&|D^{(1)}V(Y_{k})| |f( Y_{k})|
 + | D^{(2)}V(Y_{k})| |g( Y_{k})|^2 \nn\\
 \leq&c K\t^{-\theta}\big[1+V(Y_{k})\big]^{1- \delta_4}
 \big[1+V(Y_{k})\big]^{\delta_4} \nn\\
 &+cK\t^{-\theta}\big[1+V(Y_{k})\big]^{1- 2 \delta_4}
 \big[1+V(Y_{k})\big]^{ 2 \delta_4}
\leq 2cK  \big(1+V(Y_{k})\big)  \t^{-\theta},
\end{align*}
and
\begin{align*}
 \mathcal{R}^{\t}V(Y_k)=&C\sum_{i=2}^3\sum_{j=0}^{i-2}|f(  Y_k)|^{i-2j}|g( Y_k)|^{2j}|D^{(i)} V (Y_k)|\t^{i-j}\nn\\
%\leq &\frac{1}{2} |D^{(2)}V(Y_k)| |f(t_k, Y_k)|^2\t^2+\frac{d^{\frac{3}{2}}}{2!}\sum_{r=0}^{1}|f(t_k, Y_k)|^{3-2r} | g(t_k, Y_k)|^{2r} \left| D^{(3)}V (Y_k)\right|\t^{3-r}\nn\\
\leq &C\big(1+V(Y_{k})\big) \Big[  \t^{2(1-\theta)} + \sum_{r=0}^{1} \t^{(3-r)(1-\theta)} \Big]
\leq   C\big(1+V(Y_{k})\big) \t^{2(1-\theta)}.
\end{align*}
Thus, the required assertion \eqref{lyhf1} is obtained.  Moreover,
\begin{align*} |\mathcal{S}_{1}^{\t}V(Y_k)|^2 =& \langle D^{(1)}V(Y_{k}), g(  Y_{k})\t B_k\rangle\langle D^{(1)}V(Y_{k}), g(  Y_{k})\t B_k\rangle \nn\\
=& \hbox{tr}\Big[  \big(D^{(1)}V(Y_{k})\big)^T \big(g( Y_{k})\t B_k\big) \big(g( Y_{k})\t B_k\big)^T D^{(1)}V(Y_{k})  \Big] \nn\\
=&| D^{(1)}V(Y_{k})  g( Y_{k})|^2\t+\mathcal{H}_{1,1}^{\t}V(Y_k),
\end{align*}
where
\begin{align}\la{H_11}
 \!\!\! \mathcal{H}_{1,1}^{\t}V(Y_k)\!=\!\hbox{tr}\Big[ \big(D^{(1)}V(Y_{k})\big)^T  g( Y_{k})\big(\t B_k\t B_k^T\!-\mathbb{I}_m \t\big) g^T( Y_{k}) D^{(1)}V(Y_{k}) \Big],\!\!\tag{A.7}
\end{align}
%and $\mathbb{I}_m$ denotes the $m\times m$ identity matrix.
% Using  the properties
%\begin{align}\la{L_2+}  \mathbb{E}_k\big[\triangle B_k\big]=\mathbb{E}\big[\triangle B_k|\mathcal{F}_{t_k}\big]=\mathbb{E}(\triangle B_k)=\mathbf{0},~~~~ \mathbb{E}\big[\t B_k\t B_k^T|\mathcal{F}_{t_k}\big]=\mathbb{I}_m \t.\tag{A.7}
%\end{align}
 Thus, we obtain that
\begin{align*}
&\mathbb{E}_k\big[\mathcal{H}_{1,1}^{\t}V(Y_k)\big]\nn\\
=&\mathbb{E}_k\bigg\{\hbox{tr}\Big[ \big(D^{(1)}V(Y_{k})\big)^T  g(  Y_{k})\big(\t B_k\t B_k^T-\mathbb{I}_m \t\big) g^T( Y_{k}) D^{(1)}V(Y_{k}) \Big]\bigg\}\nn\\
=&\hbox{tr}\bigg\{ \big(D^{(1)}V(Y_{k})\big)^T  g(  Y_{k})\big[\mathbb{E}_k\big( \t B_k\t B_k^T\big) -\mathbb{I}_m \t\big]g^T( Y_{k}) D^{(1)}V(Y_{k}) \bigg\}
= 0.
\end{align*}
Using    \eqref{Y_01} as well as  the estimates of $|D^{(n)}V(\cdot)|$, we have
$$\mathbb{E}_k\big[|\mathcal{S}_{1}^{\t}V(Y_k)|^2\big]\!\leq |D^{(1)}V(Y_{k})|^2| g(  Y_{k})|^2\t\leq C \big(1+V(Y_{k})\big)^2\t^{1-\theta}.$$
One further observes that
\begin{align*}
|\mathcal{S}_{2}^{\t}V(Y_k)|^2=&\bigg(\sum_{|\alpha|=2,|\beta|=1\atop \alpha\geq \beta}\frac{D^{\alpha} V (Y_k)}{\beta!(\alpha-\beta)!}\big(f(  Y_k)\t\big)^{\alpha-\beta}\big(g(  Y_k)\t B_k\big)^{\beta}\nn\\
  &\qquad   +\frac{1}{2} \hbox{tr}\Big[D^{(2)}V(Y_k) g( Y_k)\big(\t B_k\t B_k^T-\mathbb{I}_m\t\big) g^T( Y_k) \Big]\bigg)^2\nn\\
  \leq& C|D^{(2)}V(Y_{k})|^2 \Big( |f( Y_{k})|^2| g( Y_{k})|^2|\t B_k|^2\t^2\nn\\
  &\qquad \qquad \qquad \qquad +| g( Y_{k})|^4 |\t B_k|^4+| g( Y_{k})|^4 \t^2\Big)\nn\\
\leq& C|D^{(2)}V(Y_{k})|^2 \Big(|f( Y_{k})|^2| g( Y_{k})|^2 \t^3 +| g( Y_{k})|^4 \t^2\Big)+\mathcal{H}_{2,2}^{\t}V(Y_k) ,
\end{align*}
where
\begin{align}\la{H_22}
 \mathcal{H}_{2,2}^{\t}V(Y_k)
 = C|D^{(2)}V(Y_{k})|^2 \Big[&|f(  Y_{k})|^2| g(  Y_{k})|^2\t^2\big(|\t B_k|^2-K_2 \t\big)\nn\\
  &~+ | g( Y_{k})|^4\big(|\t B_k|^4-K_4\t^2\big) \Big] . \tag{A.8}
\end{align}
Using   \eqref{Y_01}, \eqref{E_k} as well as  the estimates of $|D^{(n)}V(\cdot)|$, we have
\begin{align*}
\mathbb{E}_k\big[|\mathcal{S}_{2}^{\t}V(Y_k)|^2\big]
    \leq&C\big(1+V(Y_{k})\big)^2\Big( \t^{2(1-\theta)}+ \t^{3(1-\theta)} \Big)+\mathbb{E}_k\big[\mathcal{H}_{2,2}^{\t}V(Y_k)\big]\nn\\
\leq&C  \big(1+V(Y_{k})\big)^2 \t^{2(1-\theta)}.
\end{align*}
Similarly, we can also prove that
\begin{align*}
&|\mathcal{S}_{3}^{\t}V(Y_k)|^2\nn\\
=&\bigg[ \sum_{|\alpha|=3,|\beta|=1 \atop \alpha\geq \beta}\frac{D^{\alpha} V (Y_k)}{\beta!(\alpha-\beta)!}\big(f( Y_k) \big)^{\alpha-\beta} \big(g(  Y_k)\t B_{k} \big)^{\beta}\t^2+C|D^{(3)}V(Y_{k})||g( Y_k)|^2\nn\\
&\qquad \qquad \times|f( Y_k)|\big(|\t B_{k}|^2\!-K_2\t\big)\t+ \sum_{|\beta|=3} \frac{D^{\beta} V (Y_k)}{\beta!} \big(g(Y_k)\t B_{k} \big)^{\beta}\bigg]^2\nn\\
&\leq C|D^{(3)}V(Y_{k})|^2\Big( |f( Y_{k})|^4| g( Y_{k})|^2  \t^5+  | g(  Y_{k})|^6\t^3\nn\\
&\qquad \qquad \qquad \qquad \qquad + |f( Y_{k})|^2| g(  Y_{k})|^4\t^4\Big)+\mathcal{H}_{3,3}^{\t}V(Y_k),
\end{align*}
where
\begin{align} \la{H_33}
\mathcal{H}_{3,3}^{\t}V(Y_k)
=& C|D^{(3)}V(Y_{k})|^2\bigg[| g( Y_{k})|^2|f(  Y_{k})|^4\big(|\t B_k|^2 -K_2\t\big)\t^4 \nn\\
 & + | g( Y_{k})|^6\big(|\t B_k|^6-K_6\t^3\big) +|f(  Y_{k})|^2| g(  Y_{k})|^4\big(|\t B_k|^4-K_4\t^2\big)\t^2   \bigg] .\tag{A.9}
\end{align}
Thus, we obtain that
$
\mathbb{E}_k\big[|\mathcal{S}_{3}^{\t}V(Y_k)|^2\big]
% \leq&C\big(1+V(Y_{k})\big)^2\t^{3(1- \theta)} +\mathbb{E}_k\big[\mathcal{H}_{3,3}^{\t}V(Y_k)\big]\nn\\
\leq C  \big(1+V(Y_{k})\big)^2 \t^{3(1- \theta)}.
$
Returning to \eqref{fA.1}, \eqref{fA.3} and using  \eqref{Y_01} as well as  the estimates of $|D^{(n)}V(\cdot)|$, we obtain
\begin{align*}
  &\mathcal{S}_{1}^{\t}V(Y_k)\mathcal{S}_{2}^{\t}V(Y_k)\nn\\
 =& \langle D^{(1)}V(Y_{k}), g( Y_{k})\t B_k\rangle \bigg\{\sum_{|\alpha|=2,|\beta|=1\atop \alpha\geq \beta}\frac{D^{\alpha} V (Y_k)}{\beta!(\alpha-\beta)!}\big(f(  Y_k)\t\big)^{\alpha-\beta}\big(g(  Y_k)\t B_k\big)^{\beta}\nn\\
  &\qquad \qquad \qquad \qquad \qquad   +\frac{1}{2} \hbox{tr}\Big[D^{(2)}V(Y_k) g( Y_k)\big(\t B_k\t B_k^T-\mathbb{I}_m\t\big) g^T( Y_k) \Big]\bigg\}
\nn\\
\geq&   - C  |D^{(1)}V(Y_{k})||D^{(2)}V(Y_{k})|  |f( Y_{k})| |g(  Y_{k})|^{2}\t^{2} +\mathcal{H}_{1,2}^{\t}V(Y_k)\nn\\
\geq&   -C \big(1+V(Y_{k})\big)^2\t^{2(1-\theta)} +\mathcal{H}_{1,2}^{\t}V(Y_k),
\end{align*}
where
\begin{align}\la{H_12}
  \mathcal{H}_{1,2}^{\t}V(Y_k)
&=- C  |D^{(1)}V(Y_{k})||D^{(2)}V(Y_{k})|  |f(  Y_{k})| |g( Y_{k})|^{2}\big(|\t B_k|^2-K_2\t\big)\t\nn\\
  &\qquad  +\frac{1}{2} \hbox{tr}\Big[D^{(2)}V(Y_k) g( Y_k)\big(\t B_k\t B_k^T\!-\mathbb{I}_m\t\big) g^T( Y_k) \Big]\mathcal{S}_{1}^{\t}V(Y_k).\! \tag{A.10}
\end{align}
Using \eqref{L_2+} and \eqref{E_k},  it is easy to see that
\begin{align*}
&\mathbb{E}_k\big[\mathcal{H}_{1,2}^{\t}V(Y_k)\big]\nn\\
   =&- C  |D^{(1)}V(Y_{k})||D^{(2)}V(Y_{k})|  |f( Y_{k})| |g(  Y_{k})|^{2}\Big[\mathbb{E}_k\big(|\t B_k|^2\big)-K_2\t\Big]\t
 =  0.
\end{align*}
Thus, we obtain that
\begin{align*}
  \mathbb{E}_k\big[\mathcal{S}_{1}^{\t}V(Y_k)\mathcal{S}_{2}^{\t}V(Y_k)\big]
\geq -C \big(1+V(Y_{k})\big)^2\t^{2(1-\theta)}.
\end{align*}
One further observes that
\begin{align*}
  &\mathcal{S}_{1}^{\t}V(Y_k)\mathcal{S}_{3}^{\t}V(Y_k)\nn\\
 =& \langle D^{(1)}V(Y_{k}), g( Y_{k})\t B_k\rangle \bigg[ \sum_{|\alpha|=3,|\beta|=1\atop \alpha\geq \beta}\frac{D^{\alpha} V (Y_k)}{\beta!(\alpha-\beta)!}\big(f( Y_k) \big)^{\alpha-\beta} \big(g(  Y_k)\t B_{k} \big)^{\beta}\t^2\nn\\
&\qquad \qquad \qquad \qquad \qquad \quad +C|D^{(3)}V(Y_{k})||g(  Y_k)|^2|f( Y_k)|\t\big(|\t B_{k}|^2 -K_2\t\big)\nn\\
&\qquad \qquad \qquad \qquad \qquad \quad + \sum_{|\beta|=3}\frac{D^{\beta} V (Y_k)}{\beta!} \big(g(  Y_k)\t B_{k} \big)^{\beta}\bigg]
\nn\\
\geq& - C  |D^{(1)}V(Y_{k})||D^{(3)}V(Y_{k})| \Big( |f( Y_{k})|^2 |g(  Y_{k})|^{2} \t^{3} + |g( Y_{k})|^{4}\t^2\Big)+\mathcal{H}_{1,3}^{\t}V(Y_k),
\end{align*}
where
\begin{align}\la{H_13}
 \mathcal{H}_{1,3}^{\t}V(Y_k)
=&-C  |D^{(1)}V(Y_{k})||D^{(3)}V(Y_{k})| \Big[ |f(  Y_{k})|^2 |g(  Y_{k})|^{2}\big(|\t B_{k}|^2-K_2\t\big)\t^{2}\nn\\
  &\qquad \qquad \qquad \qquad\qquad \qquad   +|g(  Y_{k})|^{4}\big(|\t B_{k}|^4-K_4\t^2\big)\Big]\nn\\
 &   +C|D^{(3)}V(Y_{k})||g( Y_k)|^2|f(  Y_k)|\t\big(|\t B_{k}|^2 -K_2\t\big)
\mathcal{S}_{1}^{\t}V(Y_k).\tag{A.11}
\end{align}
Using   \eqref{Y_01}, \eqref{L_2+} and \eqref{E_k} as well as  the estimates of $|D^{(n)}V(\cdot)|$, it is easy to see that
\begin{align*}
 \mathbb{E}_k\big[\mathcal{S}_{1}^{\t}V(Y_k)\mathcal{S}_{3}^{\t}V(Y_k)\big]
\geq& -C \big(1+V(Y_{k})\big)^2\t^{2(1- \theta)}.
\end{align*}
 Therefore the desired result follows. \qed

{\bf  Proof of Lemma \ref{Le5.3}.}~~Due to $V\in  \mathcal{\bar{V}}^{4}_{\delta_4}$, using the    Taylor formula with integral remainder term we get
\begin{align}\la{yhf*}
V(\tilde{Z}_{k+1})
=&V(Z_{k})+\sum_{|\alpha|=1}^{3}\frac{ D^{\alpha} V (Z_k) }{\alpha!} \big(\tilde{Z}_{k+1}-Z_k\big)^{\alpha}+J(\tilde{Z}_{k+1}, Z_k).\tag{A.12}
\end{align}
One observes that
\begin{align*}
 \big| J(\tilde{Z}_{k+1}, Z_k)\big|
\leq& 4\sum_{|\alpha|=4}  \frac{\big|\big(\tilde{Z}_{k+1}- Z_k\big)^{\alpha}\big|}{\alpha!} \int_{0}^{1}(1-t)^{3} \big|
 D^{(4)} V\big(Z_k+t\big(\tilde{Z}_{k+1}- Z_k\big)\big)\big|\mathrm{d}t\nn\\
 \leq& \frac{c}{3!}\bigg(\sum_{i=1}^{d} \Big|f_{i}( Z_k)\t  +\sum_{j=1}^{m}g_{ij}(Z_k) \t B_{k}^{(j)}\Big|\bigg)^4 \nn\\
 &\qquad \qquad \qquad \times\int_{0}^{1}(1 -t)^{3} V^{1-  4 \delta_4} \big(Z_k+t\big(\tilde{Z}_{k+1} - Z_k\big)\big) \mathrm{d}t.
\end{align*}
Note that for any  $U\in \mathcal{\bar{V}}^{4}_{\delta_4}$ we know
$|D^{(1)} U(x)  |\leq c  U^{1-\delta_4}(x)$. By the result of \cite[Lemma 2.12, p.22]{Hutzenthaler15} we have
\begin{align*}
 U(x+y)\leq c^{\frac{1}{\delta_4}}2^{\frac{1}{\delta_4}-1}\Big(\big|U(x)\big| +|y|^{\frac{1}{\delta_4}}\Big),\qquad\forall x,y\in \mathbb{R}^d,
\end{align*}
which leads to
\begin{align*}
 \big[V\big(Z_k+t\big(\tilde{Z}_{k+1}- Z_k\big)\big)\big]^{1- 4 \delta_4}  \leq &\Big[ c^{\frac{1}{\delta_4}}2^{\frac{1}{\delta_4}-1}\Big( V(Z_k)+  t^{\frac{1}{\delta_4}} |\tilde{Z}_{k+1}- Z_k|^{\frac{1}{\delta_4}}\Big)\Big]^{1- 4 \delta_4}\nn\\
 \leq & C\Big[ V^{1- 4 \delta_4}(Z_k)+ |\tilde{Z}_{k+1}- Z_k|^{\frac{1}{\delta_4} -4}\Big]
\end{align*}
for $1/\delta_4\in [4, +\infty)$. Therefore, we derive from \eqref{2Y_01} that for any integer $k\geq0$,
\begin{align}\la{EM_1}
\big|J(\tilde{Z}_{k+1}, Z_k)\big|
\leq&C \Big[\Big(|f( Z_k)|^{4}\t^{4}+|g( Z_k)|^{4}|\t B_k|^{4}\Big)V^{1-  4 \delta_4} (Z_k)\nn\\
&\qquad \qquad \quad + \Big(|f( Z_k)|^{\frac{1}{\delta_4}}\t^{\frac{1}{\delta_4}}+|g( Z_k)|^{\frac{1}{\delta_4}}|\t B_k|^{\frac{1}{\delta_4}}\Big)  \Big]\nn\\
\leq&C \Lambda_{\rho}(Z_k)\Big\{\Big[\t^{4(1-\bar{\theta})}+\t^{-2\bar{\theta}}|\t B_k|^{4}\Big]
V(Z_k)\nn\\
&\qquad \qquad \quad  + V(Z_k)\Big[\t^{\frac{1}{\delta_4}(1-\bar{\theta})}
+\t^{-\frac{\bar{\theta}}{2\delta_4}} |\t B_k|^{\frac{1}{\delta_4}}\Big]  \Big\}\nn\\
%\leq&C \Lambda_{\rho}(Z_k)V(Z_k)  \Big[\bar{h}^{4}(\t)\t^{4}+\bar{h}^{2}(\t)|\t B_k|^{4}
% +  \bar{h}^{\delta_4}(\t)\t^{\delta_4}+\bar{h}^{\frac{\delta_4}{2}}(\t)|\t B_k|^{\delta_4}  \Big]\nn\\
\leq& C \Lambda_{\rho}(Z_k)V(Z_k)\t^{4(1-\bar{\theta})} +\tilde{\mathcal{J}}^{\t}V(Z_k),\tag{A.13}
\end{align}
where
\begin{align}\la{AA_13}
\tilde{\mathcal{J}}^{\t}V(Z_k)=C\Lambda_{\rho}(Z_k)V(Z_k) \Big[\t^{-2\bar{\theta}}|\t B_k|^{4}
 + \t^{-\frac{\bar{\theta}}{2\delta_4}}|\t B_k|^{\frac{1}{\delta_4}}  \Big].\tag{A.14}
\end{align}
Using the similar techniques in the proofs of Lemmas \ref{le1.3} and  \ref{Z:1}, we can also prove that
\begin{align*}
\!\sum_{|\alpha|=1}^{3}\!\! \frac{ D^{\alpha} V (Z_k) }{\alpha!} \big(\tilde{Z}_{k+1}\!-Z_k\big)^{\alpha}
%\!\!\leq&\mathcal{L}V(Z_{k})\t+\mathcal{R}^{\t}V(Z_k)+\sum_{i=1}^{3}\mathcal{S}_{i}^{\t}V(Z_k)\nn\\
 \leq&\mathcal{L}V(Z_{k})\t\!+C  \Lambda_{\rho}(Z_k)V(Z_k)  \t^{2(1\!-\bar{\theta})}+\!\sum_{i=1}^{3}\mathcal{S}_{i}^{\t}V(Z_k). ~~
\end{align*}
This together with \eqref{yhf*} and \eqref{EM_1} implies
\begin{align*}
V(\tilde{Z}_{k+1})\!\leq&\! V(Z_{k})+\!\mathcal{L}V(Z_{k})\t
\!+C  \Lambda_{\rho}(Z_k)V(Z_k)  \t^{2(1\!-\bar{\theta})} \!+\!\! \sum_{i=1}^{3}\!\mathcal{S}_{i}^{\t}V(Z_k) \! +\! |\tilde{\mathcal{J}}^{\t}V(Z_k)|,
\end{align*}
and by \eqref{AA_13}, one observes
\begin{align*}
|\tilde{\mathcal{J}}^{\t}V(Z_k)|
  =&C \Lambda_{\rho}(Z_k)V(Z_k) \Big[K_4\t^{2(1-\bar{\theta})}+K_{\frac{1}{\delta_4}} \t^{\frac{1-\bar{\theta}}{2\delta_4}} \Big]+\mathcal{A}_1^{\t}V(Z_k)\nn\\
\leq&C\Lambda_{\rho}(Z_k) V(Z_k) \t^{2(1-\bar{\theta})} +\mathcal{A}_1^{\t}V(Z_k),
\end{align*}
where
\begin{align}\la{EM_2}
  \mathcal{A}_1^{\t}V(Z_k)=C\Lambda_{\rho}(Z_k)V(Z_k) \Big[&\t^{ -2\bar{\theta} } \big(|\t B_k|^{4} -K_4\t^{2}\big)\nn\\
 &+ \t^{-\frac{\bar{\theta}}{2\delta_4}}\big(|\t B_k|^{\delta_4} -K_{\frac{1}{\delta_4}}\t^{\frac{1}{2\delta_4}}\big)  \Big]. \tag{A.15}
\end{align}
Using   the property \eqref{E_k} we have
$
\mathbb{E}_k\big[\mathcal{A}_1^{\t}V(Z_k)\big]=0.
$
   Thus, the required assertion \eqref{EM_0} is obtained.
Using the similar techniques in the proof of Lemma \ref{Z:1}, we can also prove that
\begin{align*}%\la{lyhf2}
|\mathcal{S}_{1}^{\t}V(Z_k)|^2 =| D^{(1)}V(Z_k)  g(  Z_k)|^2\t+\mathcal{H}_{1,1}^{\t}V(Z_k),
%~~~~~~~~~~ \mathbb{E}_k\big[\mathcal{H}_{1,1}^{\t}V(Z_k)\big]=0,
\end{align*}
and for   $i=1, 2, 3$,
\begin{align*}%\la{lyhf2}
|\mathcal{S}_{i}^{\t}V(Z_k)|^2 \leq C \Lambda_{\rho}(Z_k)V^2(Z_k) \t^{1-\bar{\theta}}+\mathcal{H}_{i,i}^{\t}V(Z_k),\qquad     \mathbb{E}_k\big[\mathcal{H}_{i,i}^{\t}V(Z_k)\big]=0,
\end{align*}
where $\mathcal{H}_{i,i}^{\t}V(\cdot)$ is defined by \eqref{H_11}, \eqref{H_22} and \eqref{H_33}, respectively,
\begin{align*}%\la{lyhf2}
  \mathcal{S}_{1}^{\t}V(Z_k)\mathcal{S}_{j}^{\t}V(Z_k) \geq- C  \Lambda_{\rho}(Z_k)V^2(Z_k) \t^{2(1-\bar{\theta})}+ \mathcal{H}_{1,j}^{\t}V(Z_k),
\end{align*}
and $\mathbb{E}_k\big[\mathcal{H}_{1,j}^{\t}V(Z_k)\big]=0$ for $j=2, 3$, where  $\mathcal{H}_{1,j}^{\t}V(\cdot)$ are defined by \eqref{H_12} and \eqref{H_13}, respectively.
 \qed

{\bf  Proof of Lemma \ref{Le5.4}.}~~ First, for an integer $1/\delta_4\in [4, +\infty)$,
 by \eqref{AA_13} and \eqref{E_k}, we deduce that
\begin{align}
|\tilde{\mathcal{J}}^{\t}V(Z_k)|^2
%=&C \Lambda_{\rho}^2(Z_k)V^2(Z_k)  \big(\t^{-2\bar{\theta}}|\t B_k|^{4}
% + \t^{-\frac{\bar{\theta}}{2\delta_4}}|\t B_k|^{\frac{1}{\delta_4}} \big)^2\nn\\
\leq&C \Lambda_{\rho}^2(Z_k)V^2(Z_k)  \big(\t^{-4\bar{\theta}}|\t B_k|^{8}
 + \t^{-\frac{\bar{\theta}}{\delta_4}}|\t B_k|^{\frac{2}{\delta_4}} \big) \nn\\
\leq&C \Lambda_{\rho}(Z_k)V^2(Y_k)   \t^{4(1-\bar{\theta})}+\mathcal{A}_2^{\t}V(Z_k),\tag{A.16}
  \end{align}
and $
\mathbb{E}_k\big[\mathcal{A}_2^{\t}V(Z_k)\big]=0
$, where
 \begin{align}\la{M2}
\mathcal{A}_2^{\t}V(Z_k)
  =&C\Lambda_{\rho}^2(Z_k)V^2(Z_k)  \Big[\t^{-4\bar{\theta}}\Big(|\t B_k|^{8}-K_8\t^{4}\Big)
\nn\\
 &\qquad \qquad \qquad \qquad
 + \t^{-\frac{\bar{\theta}}{\delta_4}}  \Big( |\t B_k|^{\frac{2}{\delta_4}}-K_{\frac{2}{\delta_4}}\t^{\frac{1}{\delta_4}}\Big) \Big].\tag{A.17}
  \end{align}
One further observes that
\begin{align}
 |\tilde{\mathcal{J}}^{\t}V(Z_k)|^3
 \leq&C \Lambda_{\rho}^3(Z_k)V^3(Z_k)  \big(\t^{-6\bar{\theta}}|\t B_k|^{12}
 + \t^{-\frac{3\bar{\theta}}{2\delta_4}}|\t B_k|^{\frac{3}{\delta_4}} \big) \nn\\
\leq&C\Lambda_{\rho}(Z_k)  V^3(Z_k)  \t^{6(1-\bar{\theta})}+\mathcal{A}_3^{\t}V(Z_k),\tag{A.18}
  \end{align}
and $\mathbb{E}_k\big[\mathcal{A}_3^{\t}V(Z_k)\big]=0$, where
 \begin{align}\la{M3}
\mathcal{A}_3^{\t}V(Z_k)
  =&C\Lambda_{\rho}^3(Z_k)V^3(Z_k)  \Big[\t^{-6\bar{\theta}}\Big(|\t B_k|^{12}-K_{12}\t^{6}\Big) \nn\\
&\qquad \qquad \qquad \qquad + \t^{-\frac{3\bar{\theta}}{2\delta_4}}\Big(|\t B_k|^{\frac{3}{\delta_4}}-K_{\frac{3}{\delta_4}}\t^{\frac{3}{2\delta_4}} \Big)\Big].\tag{A.19}
  \end{align}
On the other hand, by \eqref{H_11}, \eqref{2Y_01} and \eqref{AA_13} as well as  the estimates of $D^{(n)}V(\cdot)$, one observes
\begin{align*}
 \big|\mathcal{S}_{1}^{\t}V(Z_k)\big|^2|\tilde{\mathcal{J}}^{\t}V(Z_k)|
\leq&C\Lambda_{\rho}(Z_k) V(Z_k)   \Big( C\Lambda_{\rho}(Z_k)V^2(Z_{k}) \t^{1-\bar{\theta}}
+\mathcal{H}_{1,1}^{\t}V(Z_k)\Big)\nn\\
&\qquad \qquad \qquad   \times \Big(\t^{-2\bar{\theta}}|\t B_k|^{4}
 + \t^{-\frac{\bar{\theta}}{2\delta_4}}|\t B_k|^{\frac{1}{\delta_4}}  \Big)\nn\\
\leq&C\Lambda_{\rho}(Z_k) V^3(Z_k)  \Big(\t^{1-\bar{\theta}}+\t^{-\bar{\theta}}|\t B_k|^{2}\Big)\nn\\
&\qquad \qquad \qquad   \times\Big(\t^{-2\bar{\theta}}|\t B_k|^{4}
 + \t^{-\frac{\bar{\theta}}{2\delta_4}}|\t B_k|^{\frac{1}{\delta_4}}  \Big)\nn\\
 \leq&C\Lambda_{\rho}(Z_k) V^3(Z_k)
\Big(\t^{1-3\bar{\theta}}|\t B_k|^{4}
 + \t^{1-\frac{\bar{\theta}}{2\delta_4}-\bar{\theta}}|\t B_k|^{\frac{1}{\delta_4}}\nn\\
&\qquad \qquad \qquad \quad +\t^{-3\bar{\theta}}|\t B_k|^{6}
 + \t^{-\frac{\bar{\theta}}{2\delta_4}-\bar{\theta}}|\t B_k|^{2+\frac{1}{\delta_4}}  \Big)\nn\\
\leq&C\Lambda_{\rho}(Z_k) V^3(Z_k)  \t^{3(1-\bar{\theta})}+\mathcal{A}_4^{\t}V(Z_k)
\end{align*}
 for $1/\delta_4\in [4, +\infty)$, where
\begin{align}\la{M4}
 \mathcal{A}_4^{\t}V(Z_k)
=&C \Lambda_{\rho}(Z_k)V^3(Z_k)   \Big[ \t^{-3\bar{\theta}}\Big(|\t B_k|^{6}-K_6 \t^3+\t|\t B_k|^{4}-K_4 \t^3\Big) \nn\\
&\qquad \qquad \qquad \qquad + \t^{-\frac{\bar{\theta}}{2\delta_4}-\bar{\theta}}
\Big(|\t B_k|^{2+\frac{1}{\delta_4}} -K_{2+\frac{1}{\delta_4}}\t^{1+\frac{1}{2\delta_4}}\Big)\nn\\
&\qquad \qquad \qquad \qquad +\t^{1-\frac{\bar{\theta}}{2\delta_4}-\bar{\theta}}
\Big(|\t B_k|^{\frac{1}{\delta_4}} -K_{\frac{1}{\delta_4}}\t^{\frac{1}{2\delta_4}} \Big) \Big].\tag{A.20}
\end{align}
and  it is easy to see that $\mathbb{E}_k\big[\mathcal{A}^{\t}_4  V(Z_{k})\big]=0$.
 Similarly,   we can also prove that
\begin{align*}
 & \big|\mathcal{S}_{2}^{\t}V(Z_k)\big|^2 |\tilde{\mathcal{J}}^{\t}V(Z_k)|\nn\\
 \leq& C \Lambda_{\rho}(Z_k)V^3(Z_k)  \Big( \t^{-2\bar{\theta}}  |\t B_k|^4+ \t^{2(1-\bar{\theta})}\nn\\
 &\qquad \qquad \qquad \qquad \qquad + \t^{2-3\bar{\theta}}|\t B_k|^2 \Big) \Big(\t^{-2\bar{\theta}}|\t B_k|^{4}
 + \t^{-\frac{\bar{\theta}}{2\delta_4}}|\t B_k|^{\frac{1}{\delta_4}}  \Big) \nn\\
  \leq& C \Lambda_{\rho}(Z_k)V^3(Z_k) \t^{4(1-\bar{\theta})}+\mathcal{A}_5^{\t}V(Z_k),
\end{align*}
where
\begin{align}\la{M5}
  \mathcal{A}_5^{\t}V(Z_k)
=&C \Lambda_{\rho}(Z_k)V^3(Z_k) \! \bigg\{\t^{-4\bar{\theta}} \Big(|\t B_k|^{8}\!-K_8 \t^4\Big)\!+ \t^{2-5\bar{\theta}}\Big(|\t B_k|^{6}\!-K_6 \t^3\Big) \nn\\
& + \t^{-(\frac{1}{2\delta_4}+2)\bar{\theta}}
\Big(|\t B_k|^{\frac{1}{\delta_4}+4} -K_{\frac{1}{\delta_4}+4}\t^{\frac{1}{2\delta_4}+2} \Big)\nn\\
&+\t^{2(1-\bar{\theta})} \Big[ \t^{-2\bar{\theta}}\Big(|\t B_k|^{4}-K_4\t^2\Big)
 + \t^{\frac{-\bar{\theta}}{2\delta_4}} \Big(|\t B_k|^{\frac{1}{\delta_4}}-K_{\frac{1}{\delta_4}}\t^{\frac{1}{2\delta_4}}\Big)  \Big]\nn\\
& + \t^{2-\bar{\theta}(\frac{1}{2\delta_4}+3)}
\Big(|\t B_k|^{\frac{1}{\delta_4}+2} -K_{\frac{1}{\delta_4}+2}\t^{\frac{1}{2\delta_4}+1} \Big) \bigg\},\tag{A.21}
\end{align}
and  it is easy to see that $\mathbb{E}_k\big[\mathcal{A}^{\t}_5 V(Z_{k})\big]=0$. Moreover,
\begin{align*}
 & \big|\mathcal{S}_{3}^{\t}V(Z_k)\big|^2 |\tilde{\mathcal{J}}^{\t}V(Z_k)| \nn\\
 \leq& C\Lambda_{\rho}(Z_k) V^3(Z_k)   \Big(  |\t B_k|^2\t^{4-5\bar{\theta}}+\t^{2-4\bar{\theta}} |\t B_k|^4 \nn\\
  &\qquad \qquad \qquad \qquad +\t^{4(1-\bar{\theta})}+ \t^{-3\bar{\theta}}|\t B_k|^6\Big)\Big(\t^{-2\bar{\theta}}|\t B_k|^{4}
 + \t^{\frac{-\bar{\theta}}{2\delta_4}} |\t B_k|^{\frac{1}{\delta_4}}  \Big) \nn\\
   \leq& C \Lambda_{\rho}(Z_k)V^3(Z_k) \t^{5(1-\bar{\theta})}+\mathcal{A}_6^{\t}V(Z_k),
\end{align*}
where
\begin{align}\la{M6}
  \mathcal{A}_6^{\t}V(Z_k)
=&C\Lambda_{\rho}(Z_k) V^3(Z_k)  \bigg\{\t^{-5\bar{\theta}}\Big(|\t B_k|^{10}-K_{10} \t^5\Big)\nn\\
&+ \t^{2-6\bar{\theta}}\Big(|\t B_k|^{8}-K_8 \t^4\Big)+\t^{4-7\bar{\theta}}\Big(|\t B_k|^{6}-K_6 \t^3\Big)\nn\\
&+ \t^{-\bar{\theta}(\frac{1}{2\delta_4}+3)}
\Big(|\t B_k|^{\frac{1}{\delta_4}+6} -K_{\frac{1}{\delta_4}+6}\t^{\frac{1}{2\delta_4}+3} \Big)\nn\\
&+\t^{4(1-\bar{\theta})} \Big[\t^{ -2\bar{\theta}} \Big(|\t B_k|^{4}-K_4\t^2\Big)
 + \t^{\frac{-\bar{\theta}}{2\delta_4}} \Big(|\t B_k|^{\frac{1}{\delta_4}}-K_{\frac{1}{\delta_4}}\t^{\frac{1}{2\delta_4}}\Big)  \Big]\nn\\
&+ \t^{2-\bar{\theta}(\frac{1}{2\delta_4}+4)}
\Big(|\t B_k|^{\delta_4+4} -K_{\frac{1}{\delta_4}+4}\t^{\frac{1}{2\delta_4}+2} \Big)\nn\\
&+ \t^{4-\bar{\theta}(\frac{1}{2\delta_4}+5)}
\Big(|\t B_k|^{\frac{1}{\delta_4}+2} -K_{\frac{1}{\delta_4}+2}\t^{\frac{1}{2\delta_4}+1} \Big) \bigg\},\tag{A.22}
\end{align}
and  it is easy to see that $\mathbb{E}_k\big[\mathcal{A}^{\t}_6 V(Z_{k})\big]=0$.
 \qed

%%
%%% For one-column wide figures use
%%\begin{figure}
%%% Use the relevant command to insert your figure file.
%%% For example, with the graphicx package use
%%  \includegraphics{example.eps}
%%% figure caption is below the figure
%%\caption{Please write your figure caption here}
%%\label{fig:1}       % Give a unique label
%%\end{figure}
%%%
%%% For two-column wide figures use
%%\begin{figure*}
%%% Use the relevant command to insert your figure file.
%%% For example, with the graphicx package use
%%  \includegraphics[width=0.75\textwidth]{example.eps}
%%% figure caption is below the figure
%%\caption{Please write your figure caption here}
%%\label{fig:2}       % Give a unique label
%%\end{figure*}
%%%
%%% For tables use
%\begin{table}
%% table caption is above the table
%\caption{Please write your table caption here}
%\label{tab:1}       % Give a unique label
%% For LaTeX tables use
%\begin{tabular}{lll}
%\hline\noalign{\smallskip}
%first & second & third  \\
%\noalign{\smallskip}\hline\noalign{\smallskip}
%number & number & number \\
%number & number & number \\
%\noalign{\smallskip}\hline
%\end{tabular}
%\end{table}
%

%\begin{acknowledgements}
%If you'd like to thank anyone, place your comments here
%and remove the percent signs.
%\end{acknowledgements}

% BibTeX users please use one of
%\bibliographystyle{spbasic}      % basic style, author-year citations
%\bibliographystyle{spmpsci}      % mathematics and physical sciences
%\bibliographystyle{spphys}       % APS-like style for physics
%\bibliography{}   % name your BibTeX data base

% Non-BibTeX users please use
\bibliographystyle{amsplain}

\end{document}